\documentclass[11pt,reqno]{article}
\usepackage{diagbox}
\usepackage{amsfonts}
\usepackage{bbm}
\usepackage{bm}
\usepackage{pifont}
\usepackage{mathptmx}
\usepackage{amsmath}
\usepackage{mathrsfs}
\usepackage{hyperref}
\usepackage{amssymb}
\usepackage{amsmath,color}
\usepackage{amsthm}
\usepackage{amstext}
\usepackage[numbers,sort&compress]{natbib}
\usepackage{amsopn}
\usepackage{texdraw}
\usepackage{graphicx}
\usepackage{multirow}
\usepackage{lscape}
\usepackage{float}
\usepackage[palatino,gill,courier]{altfont}
\usepackage[left=3cm,right=3cm,top=3cm,bottom=3cm]{geometry}
\usepackage{esint}
\usepackage{setspace}

\usepackage[english]{babel}
\usepackage{mathrsfs,amsmath}
\usepackage{color}
\usepackage{framed}
\allowdisplaybreaks

\usepackage{color} 
\usepackage{enumerate} 
\newtheorem{theorem}{Theorem}[section] 

\newtheorem{lemma}[theorem]{Lemma}

\newtheorem{assumption}[theorem]{Assumption}
\newtheorem{remark}[theorem]{Remark}

\newcommand{\dt}{\tilde}

\let\Section=\section
\def\section{\setcounter{equation}{0}\Section}
\title{Well-posedness and Stability Analysis of Suspension Bridge Models Coupled with Cattaneo Heat Conduction: The Role of Viscoelastic Memory}
\author{Jun Zhou\footnote{Corresponding author. Email: jzhouwm@163.com} ~~~~~~ Jiamin He\footnote{Email: 1551937477@qq.com}\\\\{School of Mathematics and Statistics, Southwest University},\\
		{Chongqing 400715, People's Republic of China}}
\date{}
\date{}
\begin{document}
	\maketitle
	\begin{abstract}
This paper focuses on investigating a class of suspension bridge models coupled with the Cattaneo heat conduction law, with special attention paid to two distinct scenarios: systems with viscoelastic memory and those without. Operator semigroup theory is adopted as the core mathematical tool to conduct a comprehensive analysis of the models' dynamic behaviors. For the memoryless suspension bridge system ($\delta = 0$), we first establish its well-posedness (i.e., the existence and uniqueness of solutions). Further stability analysis reveals that this system exhibits polynomial decay behavior, where the decay rate depends on the structural parameter relationship of the bridge deck: a decay rate of $t^{-1/2}$ is achieved when $\frac{\rho_1}{\kappa} = \frac{\rho_2}{b}$, and the rate slows down to $t^{-1/4}$ when $\frac{\rho_1}{\kappa} \neq \frac{\rho_2}{b}$. Moreover, exponential stability is proven to be unachievable under specific coefficient configurations (i.e., $\chi_0 \neq 0$, or $\chi_0 = 0$ and $\chi_1 = 0$, with $\chi_0, \chi_1$ being parameter-dependent coefficients). For the counterpart system incorporating viscoelastic memory ($\delta = 1$), the well-posedness of solutions is also verified, and more importantly, the system is shown to achieve exponential decay. This indicates that viscoelastic memory significantly enhances system stability and accelerates energy dissipation compared to the memoryless case. By systematically exploring the regulatory role of viscoelastic memory in the stability of suspension bridge systems under the framework of Cattaneo thermal conduction, this study enriches and extends the existing research on  suspension bridge models with viscoelastic memory.
		\\\\
		\textbf{Keywords}:  Suspension bridge; Viscoelastic memory; Cattaneo heat conduction; Well-posedness; Stability\\\\
\textbf{MSC}:  35B35; 35B40; 35L70; 74D05; 80A20
	\end{abstract}
\section{Introduction}
As a bridge type boasting exceptional spanning capability, the grand structure of a suspension bridge embodies intricate mechanical behaviors. Consequently, developing accurate mathematical models is of utmost importance. In a typical suspension bridge model, the main cable is idealized as an elastic string to characterize its vertical vibrations, whereas the bridge deck is modeled based on the Timoshenko beam theory \cite{timoshenko1921lxvi}. The main cable and bridge deck are connected through a set of linear elastic springs. These springs simulate the suspenders, thereby forming a complex interactive system. To analyze the stability of this system, damping mechanisms (such as internal friction or thermoelastic effects) are frequently integrated into research studies.

In this paper, we consider the following suspension bridge system, which incorporates the Cattaneo heat conduction law as a key component
	\begin{align}\label{model1.1}
		\begin{cases}
			\rho u_{tt} - \alpha u_{xx} - \beta(\varphi - u) + \gamma u_t = 0 & \text{in } (0,L) \times \mathbb{R}^+, \\
			\rho_1 \varphi_{tt} - \kappa(\varphi_x + \psi)_x + \beta (\varphi - u) + m \theta_x = 0 & \text{in } (0,L) \times \mathbb{R}^+, \\
			\rho_2 \psi_{tt} - b \psi_{xx} + \kappa(\varphi_x + \psi) + \delta \int_0^\infty g(s) \psi_{xx}(t-s)  ds - m \theta = 0 & \text{in } (0,L) \times \mathbb{R}^+, \\
			\rho_3 \theta_t + q_x + m (\varphi_x + \psi)_t = 0 & \text{in } (0,L) \times \mathbb{R}^+, \\
			\tau q_t + \sigma q + \theta_x = 0& \text{in } (0,L) \times \mathbb{R}^+,
		\end{cases}
	\end{align}
where the parameter $\delta$ takes two distinct values to represent different viscoelastic conditions: $\delta = 0$ corresponds to the case \textit{without viscoelastic memory}; $\delta = 1$ corresponds to the case \textit{with viscoelastic memory}.

 This suspension bridge system is subject to the following homogeneous boundary conditions, which reflect the physical constraints at the two ends of the bridge (denoted by $x=0$ and $x=L$)
	\begin{align*}
		u_x(0,t) = u_x(L,t) = \varphi_x(0,t) = \varphi_x(L,t) = \psi(0,t) = \psi(L,t) = \theta(0,t) = \theta(L,t) = 0.
	\end{align*}

The key physical quantities in the aforementioned system are defined based on their functional roles in describing the dynamic (vibrational) and thermal behaviors of the suspension bridge: $u$ represents the vertical displacement of the main cable (captures the vertical vibrational response of the main cable under external or internal forces); $\varphi$ represents the transverse displacement of the bridge deck beam's cross-section (describes the lateral displacement of the centroid of the bridge deck beam's cross-section); $\psi$ represents the rotation angle of the bridge deck beam's cross-section (characterizes the rotational deformation of the bridge deck beam's cross-section during vibration); $q$ represents the heat flux (quantifies the rate of thermal energy transfer per unit area within the bridge system, a core variable in heat conduction analysis); $\theta$  represents the temperature deviation relative to the ambient temperature (reflects the difference between the system's local temperature and the surrounding environmental temperature).

Each parameter in the model is associated with specific material properties, structural stiffness, or physical effects of the suspension bridge components. They are classified as follows for clarity: $\rho$ denotes the density of the main cable (mass per unit volume of the main cable material, determining the cable's inertial response to vibration); $\alpha$  denotes the elastic modulus of the main cable (measure of the main cable's stiffness against elastic deformation; a higher value indicates greater resistance to stretching/compression); $\gamma$ denotes the internal damping of the main cable (accounts for energy dissipation within the main cable due to internal friction during vibration, preventing infinite amplitude growth); $\rho_1$ denotes the density of the bridge deck beam (mass per unit volume of the bridge deck beam material, influencing the beam's translational inertial behavior); $\kappa$ denotes the shear stiffness of the bridge deck beam (describes the bridge deck beam's ability to resist shear deformation, critical for capturing transverse shear effects in beam vibration); $\rho_2$ denotes the rotational moment of inertia of the bridge deck beam's cross-section (quantifies the beam's resistance to rotational acceleration, essential for modeling rotational vibration of the cross-section); $b$ denotes the bending stiffness of the bridge deck beam (characterizes the bridge deck beam's resistance to bending deformation; depends on the beam's material and cross-sectional geometry); $\beta$ denotes the stiffness of the suspenders (stiffness of the vertical suspenders connecting the main cable and the bridge deck beam; governs the force transfer between these two components); $m$ denotes the material-dependent coupling coefficient (quantifies the interactive effect between the structural deformation field and the thermal field, reflecting thermo-mechanical coupling); $\rho_3$ denotes the thermal inertia (represents the system's resistance to changes in temperature; a higher value means slower temperature variation); $\tau$ denotes the relaxation time (a key parameter in the Cattaneo heat conduction law, describing the delay in the response of heat flux to temperature gradients, distinguishing it from Fourier's law); $\sigma$ denotes the thermal conductivity (measure of the material's ability to conduct heat; a higher value indicates more efficient thermal energy transfer).

To ensure the physical rationality and mathematical well-posedness of the model, we impose the following assumptions on the parameters:  $\rho, \rho_1, \rho_2, \rho_3, \alpha, \kappa, b, \tau, \sigma, \beta, \gamma > 0$; the thermo-mechanical coupling coefficient $m$ is a non-zero real number, i.e., $m \in \mathbb{R} \setminus \{0\}$ (non-zero to reflect the existence of thermo-mechanical interaction). The specific definition of the memory kernel function $g(s)$, which describes the viscoelastic memory effect of the system when $\delta=1$, will be provided in subsequent sections, along with corresponding regularity conditions to guarantee the solvability of the integral term.
	
Raposo et al. \cite{MR4688212} investigated a suspension bridge model that incorporates only three types of internal damping: main cable damping, deck transverse damping, and deck rotational damping. For $(x,t) \in (0,L)\times \mathbb{R}^+$, the model is governed by the following system
	\begin{align*}
		\begin{cases}
			u_{tt} - \alpha u_{xx} - \beta(\varphi - u) + \gamma_1 u_t = 0,  \\
			\rho_1 \varphi_{tt} - \kappa (\varphi_x + \psi)_x + \beta (\varphi - u) + \gamma_2 \varphi_t = 0, \\
			\rho_2 \psi_{tt} - b \psi_{xx} + \kappa (\varphi_x + \psi) + \gamma_3 \psi_t = 0,\\
			u(0,t)=u(L,t)=\varphi(0,t)=\varphi(L,t)=\psi(0,t)=\psi(L,t)=0,
		\end{cases}
	\end{align*}
where they proved that the system achieves exponential stability relying solely on internal frictional damping.
	
Ivana Bochicchio et al. \cite{MR4149715} investigated a thermoelastically coupled suspension bridge model, where the main cable and deck are coupled via suspender stiffness. The model incorporates two types of internal damping: main cable damping and deck transverse damping and accounts for Fourier heat conduction effects. For $(x,t) \in (0,L)\times \mathbb{R}^+$, the model is formulated as the following system
	\begin{align*}
		\begin{cases}
			\varrho u_{tt} - \alpha u_{xx} - \beta(\varphi - u) + \gamma_1 u_t = 0, \\
			\varrho_1 \varphi_{tt} - \kappa(\varphi_x + \psi)_x + \beta(\varphi - u) + \gamma_2 \varphi_t = 0, \\
			\varrho_2 \psi_{tt} - b\psi_{xx} + \kappa(\varphi_x + \psi) - m\theta_x = 0, \\
			\theta_t - \tau \theta_{xx} - m\psi_{xt} = 0, \\
			u(0,t)=u(L,t)=\varphi(0,t)=\varphi(L,t)=\psi(0,t)=\psi_x(L,t)=\theta(0,t)=\theta(L,t)=0.
		\end{cases}
	\end{align*}
They proved the exponential stability of this thermoelastically coupled model: specifically, the total energy of the system decays at an exponential rate under the combined action of internal damping and thermal diffusion.

L. G. R. Miranda et al. \cite{MR4887867} investigated a thermoelastically coupled suspension bridge system that incorporates main cable internal damping and Fourier heat conduction effects, with the thermal dissipation mechanism acting on the shear force. For $(x,t) \in (0,L)\times \mathbb{R}^+$, the system is described by the following equations
	\begin{align}\label{eq1.3}
		\begin{cases}
			\rho u_{tt} - \alpha u_{xx} - \beta (\varphi - u) + \gamma u_t = 0,  \\
			\rho_1 \varphi_{tt} - \kappa (\varphi_x + \psi)_x + \beta(\varphi - u) + m\theta_x = 0 ,  \\
			\rho_2 \psi_{tt} - b \psi_{xx} + \kappa (\varphi_x + \psi) - m\theta = 0, \\
			\theta_t - \tau \theta_{xx} + m(\varphi_x + \psi)_t = 0,\\
			u_x(0,t)=u_x(L,t)=\varphi_x(0,t)=\varphi_x(L,t)=\psi(0,t)=\psi(L,t)=\theta(0,t)=\theta(L,t)=0.
		\end{cases}
	\end{align}
	They found that when $\frac{\kappa}{\rho_1} = \frac{b}{\rho_2}$, the system exhibits exponential stability; otherwise, it does not exhibit exponential stability and instead possesses polynomial stability with a decay rate of $t^{-\frac{1}{2}}$.

For type III thermoelasticity, Meriem Chabekh et al. \cite{MR4848548} analyzed the following suspension bridge model
	\begin{align*}
		\begin{cases}
			\rho u_{tt} - \alpha u_{xx} - \beta (\varphi - u) + \gamma_1 u_t = 0, \\
			\rho_1 \varphi_{tt} - \kappa (\varphi_x + \psi)_x + \beta (\varphi - u) + \gamma_2 \varphi_t + m \theta_x = 0,  \\
			-b \psi_{xx} + \kappa (\varphi_x + \psi) = 0, \\
			\rho_3 \theta_{tt} - \tau \theta_{xx} + m \varphi_{xtt} - a \theta_{xxt} = 0, \\
			u(0, t) = u(L, t) = \varphi (0, t) = \varphi (L, t) = \psi (0, t) = \psi (L, t) = \theta (0, t) = \theta (L, t) = 0,
		\end{cases}
	\end{align*}
where $(x,t) \in (0,L)\times \mathbb{R}^+$. Using the energy method, they proved that the system's energy decays exponentially over time.
	
	Soh Edwin Mukiawa et al. \cite{MR4539428} investigated a thermoelastic Timoshenko beam system with suspenders, weakly nonlinear internal damping (with time-varying coefficients), and time-varying delay. For $(x,t) \in (0,L) \times \mathbb{R}^+$, the system is formulated as follows
\begin{align*}
    \begin{cases}
        \rho u_{tt}- \alpha u_{xx} - \beta (\varphi - u)+ \gamma_1 a(t) g_1 (u_t(x,t)) + \gamma_2 a(t) g_2 (u_t(x,t - \tau(t))) = 0, \\
        \rho_1 \varphi_{tt} - \kappa(\varphi_x + \psi)_x + \beta (\varphi - u)+ \gamma_3 \varphi_t = 0, \\
        \rho_2 \psi_{tt} - b \psi_{xx}+ \kappa(\varphi_x + \psi) - m \theta_x = 0, \\
        \rho_3 \theta_t - \tau \theta_{xx} - m \psi_{tx}= 0, \\
        u(0,t)=u(L,t)=\varphi_{x}(0,t)=\varphi(L,t)=\psi(0,t)=\psi_{x}(L,t) = \theta_{x}(0,t)=\theta(1,t)=0.
    \end{cases}
\end{align*}
From their analysis, they derived a general stability result, which is expressed as
\[
    E(t) \leq \delta_1 \chi_1^{-1} \left( \delta_2 \int_0^t a(s) \, ds + \delta_3 \right), \quad \forall t \geq 0.\]
	
When viscoelastic memory effects are accounted for, Soh Edwin Mukiawa et al. \cite{MR4913141} investigated and proposed the following coupled suspension bridge system, which incorporates material memory effects and adheres to the Gurtin-Pipkins heat conduction law. For $(x,t) \in (0,L) \times \mathbb{R}^+$, the system is defined as
\begin{align*}
    \begin{cases}
        \rho u_{tt} + \alpha u_{xxxx} + \beta[u - \varphi]^+ - \int_0^{\infty} g(s) u_{xxxx}(x,t-s) \, ds = 0,  \\
        \rho_1 \varphi_{tt} - \kappa \varphi_{xx} - \beta [u - \varphi]^+ + m \theta_x = 0,  \\
        \rho_3 \theta_t - \delta \int_0^{\infty} h(s) \theta_{xx}(x,t-s) \, ds + m \varphi_{xt} = 0,  \\
        u(0,t) = u_{xx}(0,t) = \varphi_x(0,t) = \theta(0,t) =
        u(L,t) = u_{xx}(L,t) = \varphi(L,t) = \theta_x(L,t) = 0.
    \end{cases}
\end{align*}
From their analysis of the system, they derived an estimate for the system's energy, which is expressed as
\begin{align*}
    E(t) \leq
    \begin{cases}
        K e^{-\alpha t}, & \text{if } p = 1, \\
        K (1 + t)^{-\frac{1}{2p-2}}, & \text{if } 1 < p < \frac{3}{2}.
    \end{cases}
\end{align*}
	
	Mohammad M. Al-Gharabli et al. \cite{MR4891851} investigated a one-dimensional coupled suspension bridge system model that incorporates both viscoelastic damping and frictional damping with the latter modulated by a time-dependent coefficient. Specifically, the model is expressed as follows
\begin{align*}
    \begin{cases}
        u_{tt} + u_{xxxx} + [u - \varphi]^+ + \beta(t)\phi(u_t) = 0, \\
        \varphi_{tt} - \varphi_{xx} - [u - \varphi]^+ + \int_0^t g(t-s)\varphi_{xx}(s)ds = 0,  \\
        u(0, t) = u_{xx}(0, t) = u(L, t) = u_{xx}(L, t) = \varphi(0, t) = \varphi(L, t) = 0,
    \end{cases}
\end{align*}
where the spatial-temporal domain is defined as $(x,t) \in (0,L)\times (0,T)$ ($T$ denotes the finite time horizon for analysis). From their investigation, they derived an estimate for the system's energy, which is given by
\begin{align*}
    E(t) \leq c_1 \Phi_1^{-1} \left( m_2 \int_0^t \xi(s)  ds + m_3 \right), \quad \forall t \geq 0.
\end{align*}
	
	 Mounir Afilal et al. \cite{MR4342054} investigated a one-dimensional coupled suspension bridge system model with viscoelastic memory, which is formulated as follows
\begin{align*}
    \begin{cases}
        u_{tt} + u_{xxxx} - \int_0^t g_1 (t-s) u_{xxxx}(s)  ds + \left( \int_0^L (u^2 + \varphi^2)  dx \right) u + 2 \left( \int_0^L u \varphi  dx \right)\varphi = 0,\\
        \varphi_{tt} - \varphi_{xx} + \int_0^t g_2 (t-s) \varphi_{xx}(s)  ds + 2 \left( \int_0^L u \varphi  dx \right) u + \left( \int_0^L (u^2 + \varphi^2)  dx \right) \varphi= 0, \\
        u(0,t)=u(L,t)=u_{xx}(0,t)=u_{xx}(L,t)=\varphi(0,t)=\varphi(L,t)=0,
    \end{cases}
\end{align*}
where $(x,t) \in (0,L)\times \mathbb{R}^+$. Using the energy method, they derived a decay estimate for the system's energy, given by
\begin{align*}
    E(t) \leq C_0 H_4^{-1} \left( C_1 \int_{t_0}^{t} \xi(s)  ds \right), \quad \forall t \geq t_0.
\end{align*}

Furthermore, for the stability analysis of two-dimensional suspension bridges, several key studies have been conducted: for instance, Soh Edwin Mukiawa \cite{MR3754568} analyzed a two-dimensional fourth-order viscoelastic suspension bridge system and derived an energy estimate for it. Notably, Zayd Hajjej \cite{MR4252268} investigated a viscoelastic suspension bridge model incorporating nonlinear damping and a source term, also providing an energy estimate for the system. Building on this work, Adel M. Al-Mahdi et al. \cite{MR4694563} extended the results presented in \cite{MR4252268}.

Additionally, additional studies on the asymptotic behavior of suspension bridges are categorized below for reference: For models without memory terms, readers may refer to \cite{MR4875485, MR3023261, MR2142424, MR2514725}; for models that incorporate memory effects, the recommended references are \cite{MR4886591, MR3575770, MR3240285, MR3458828, MR3609015}.

The decision to focus on the aforementioned suspension bridge system \eqref{model1.1} stems from three key considerations, which address critical gaps identified in the existing literature reviewed above. First, regarding heat conduction modeling: most prior thermoelastically coupled models (e.g., Bochicchio et al. \cite{MR4149715}, Miranda et al. \cite{MR4887867}) adopt Fourier's law, which assumes instantaneous thermal propagation-an idealization that deviates from engineering reality, where heat conduction inherently exhibits time delay (especially in materials with low thermal conductivity, such as concrete used in bridge decks). By incorporating the Cattaneo heat conduction law (characterized by the relaxation time $\tau$ and heat flux $q$), our model accurately captures this thermal delay, making it more representative of the actual thermal behavior of suspension bridges. Second, in terms of viscoelastic memory representation: existing models either exclude memory effects entirely (Raposo et al. \cite{MR4688212}, Chabekh et al. \cite{MR4848548}) or fix memory as a permanent feature (Mukiawa et al. \cite{MR4913141}, Al-Gharabli et al. \cite{MR4891851}), while practical suspension bridges may experience varying viscoelastic behavior (e.g., temporary memory effects due to temperature fluctuations or long-term memory from material creep). Our model's adjustable parameter $\delta$ ($\delta=0$ for no memory, $\delta=1$ for memory) enables a unified analysis of both scenarios, filling the gap of \lq\lq switchable viscoelasticity\rq\rq in existing research. Third, from the perspective of multi-physical coupling comprehensiveness: the model integrates four core mechanisms-main cable vertical vibration (governed by $u$), bridge deck transverse/rotational vibration (based on Timoshenko beam theory, described by $\varphi$ and $\psi$), Cattaneo-type heat conduction (via $\theta$ and $q$), and thermo-mechanical coupling (through $m$), while also including main cable internal damping ($\gamma$) and suspender stiffness ($\beta$). This holistic framework avoids the simplifications of prior models (e.g., neglecting deck rotation, using linear heat conduction, or omitting damping), ensuring it can capture the intricate interactions between mechanical and thermal fields in real suspension bridges. In summary, the model \eqref{model1.1} not only addresses the limitations of existing studies but also provides a more accurate and flexible theoretical tool for analyzing the stability and dynamic behavior of suspension bridges, with direct implications for engineering design and safety assessment.

The main results of this paper concerning the asymptotic stability of the suspension bridge system \eqref{model1.1} are summarized as follows
\begin{enumerate}
  \item When $\delta = 0$ (i.e., the system without viscoelastic memory), the solution to problem \eqref{model1.1} exhibits polynomial decay behavior. More precisely, the decay rate depends on the relationship between the structural parameters of the bridge deck: if $\frac{\rho_1}{\kappa} = \frac{\rho_2}{b}$, the solution decays polynomially with a rate of $t^{-1/2}$; otherwise, when $\frac{\rho_1}{\kappa} \neq \frac{\rho_2}{b}$, the decay rate slows down to $t^{-1/4}$. Moreover, if $\chi_0 \neq 0$, or if $\chi_0 = 0$ and $\chi_1 = 0$, the solution to problem \eqref{model1.1} is not exponentially stable, where $\chi_0 = \kappa\left(\dfrac{\rho_2}{b} -  \dfrac{\rho_1}{\kappa}\right)\left(\rho_3 - \dfrac{\rho_2}{b\tau}\right) + \dfrac{m^2 \rho_2}{b}$ and $\chi_1 = -m^2 - \kappa\rho_3 + \dfrac{\kappa\rho_2}{\tau b}$.
  \item When $\delta = 1$ (i.e., the system with viscoelastic memory), the solution to problem \eqref{model1.1} achieves exponential decay. This indicates that the introduction of viscoelastic memory significantly enhances the stability of the system, leading to a much faster energy dissipation compared to the memoryless case.
\end{enumerate}
      The main results reveal a crucial insight into the role of viscoelastic memory in regulating the stability of thermoelastic suspension bridge systems. Firstly, the polynomial stability in the memoryless case ($\delta = 0$) is consistent with the findings of Miranda et al. \cite{MR4887867}, which confirms the rationality of our model in capturing the intrinsic dynamic characteristics of suspension bridges without viscoelastic effects. However, our result further refines the decay rate by identifying the critical parameter ratio $\frac{\rho_1}{\kappa} = \frac{\rho_2}{b}$ that governs the polynomial decay speed, which is attributed to the more accurate Cattaneo heat conduction law adopted in our model. Secondly, the exponential stability in the memory case ($\delta = 1$) demonstrates the dominant role of viscoelastic memory in promoting energy dissipation. This is because the memory kernel $g(s)$ introduces additional damping mechanisms that continuously dissipate the system's energy accumulated from historical deformations, thereby overcoming the limited dissipation capacity of thermoelastic effects alone. From an engineering perspective, these results provide valuable guidance for the design and maintenance of suspension bridges: for bridges in environments where viscoelastic memory effects are significant (e.g., long-span bridges with polymer materials), the inherent viscoelasticity can be leveraged to enhance structural stability; for memoryless structures, the parameter ratio $\frac{\rho_1}{\kappa}$ and $\frac{\rho_2}{b}$ should be carefully optimized to ensure an acceptable decay rate. Finally, it is worth noting that the exponential decay result does not require the parameter ratio condition, implying that viscoelastic memory can uniformly improve the stability of the system regardless of the initial structural parameter configuration.

The remainder of this paper is organized as follows. In Section \ref{sec2}, we focus on the case \lq\lq without\rq\rq the viscoelastic memory term, corresponding to $\delta = 0$. Specifically, Subsection \ref{sec2.1} is dedicated to proving the well-posedness of the system under this condition. In Subsection \ref{sec2.2}, we establish the polynomial stability of the system, followed by Subsection \ref{sec2.3}, where we further demonstrate that the system fails to exhibit exponential stability under certain specified conditions. Subsequently, Section \ref{sec3} is devoted to the analysis of the case \lq\lq with\rq\rq the viscoelastic memory term, i.e., $\delta = 1$. In this section, Subsection \ref{sec3.1} verifies the well-posedness of the memory-dependent system, and Subsection \ref{sec3.2} rigorously proves that the system achieves exponential stability in this scenario.
\section{Model Formulation without the Boltzmann Memory Term ($\delta = 0$)} \label{sec2}
When $\delta = 0$, the suspension bridge model \eqref{model1.1} reduces to the following memoryless form
\begin{align}\label{eq:without memory}
	\begin{cases}
		\rho u_{tt} - \alpha u_{xx} - \beta(\varphi - u) + \gamma u_t = 0 & \text{in } (0,L) \times \mathbb{R}^+, \\
		\rho_1 \varphi_{tt} - \kappa(\varphi_x + \psi)_x + \beta (\varphi - u) + m \theta_x = 0 & \text{in } (0,L) \times \mathbb{R}^+, \\
		\rho_2 \psi_{tt} - b \psi_{xx} + \kappa(\varphi_x + \psi) - m \theta = 0 & \text{in } (0,L) \times \mathbb{R}^+, \\
		\rho_3 \theta_t + q_x + m (\varphi_x + \psi)_t = 0 & \text{in } (0,L) \times \mathbb{R}^+, \\
		\tau q_t + \sigma q + \theta_x = 0& \text{in } (0,L) \times \mathbb{R}^+,
	\end{cases}
\end{align}
This memoryless system is subject to the following initial and boundary conditions: for $t \in \mathbb{R}^+$, the boundary conditions are
\begin{align}\label{without memory:boundary condition}
        u_x(0,t) = u_x(L,t) = \varphi_x(0,t) = \varphi_x(L,t) = \psi(0,t) = \psi(L,t) = \theta(0,t) = \theta(L,t) = 0,
\end{align}
and for $x \in (0, L)$, the initial conditions are
\begin{equation}\label{without memory:initial condition}
 \begin{cases}
  u(x,0) = u_0(x), \quad u_t(x,0) = v_0(x), \\
  \varphi(x,0) = \varphi_0(x), \quad \varphi_t(x,0) = \Phi_0(x), \\
  \psi(x,0) = \psi_0(x), \quad \psi_t(x,0) = \Psi_0(x), \\
  \theta(x,0) = \theta_0(x), \quad q(x,0) = q_0(x).
 \end{cases}
\end{equation}

We consider the standard Lebesgue and Sobolev spaces over the interval \((0,L)\), denoted by
\[
L^2 = L^2(0,L), \quad H^1 = H^1(0,L), \quad H_0^1 = H_0^1(0,L).
\]
The standard inner product and norm on \(L^2\) are denoted by \((\cdot,\cdot)\) and \(\|\cdot\|\), respectively.
We introduce the following subspaces
\begin{align*}
	L_*^2 = \Bigl\{ \omega \in L^2 : \int_0^L \omega(x)\,dx = 0 \Bigr\}, ~~H_*^1 := H^1 \cap L_*^2, ~~L_g^2(\mathbb{R}^+; H_0^1) := \Bigl\{ \omega : \sqrt{g}\,\omega \in L^2\bigl(\mathbb{R}^+; H_0^1\bigr) \Bigr\},
\end{align*}
where the last space is equivalently characterized by the finiteness of the integral
\[
\int_0^{\infty} g(s)\, \|\omega_x(s)\|^2 \, ds < \infty .
\]

By virtue of Poincar\'e's inequality, which asserts that for any \(\phi \in H_0^1\) or \(\phi \in H_*^1\),
\[
\|\phi\|^2 \le c_p \|\phi_x\|^2
\]
holds with a positive constant \(c_p > 0\), the following Hilbert space structures are induced
\begin{itemize}
	\item  The spaces \(H_0^1\) and \(H_*^1\) are Hilbert spaces when equipped with the inner product \((\cdot_x, \cdot_x)\) (i.e., the inner product induced by the derivative) and the corresponding norm \(\|\cdot_x\|\).
	
	\item The subspace \(L_*^2\) is a Hilbert space under the standard \(L^2\)-inner product \((\cdot,\cdot)\) and its associated norm \(\|\cdot\|\).
	
	\item  The space \(L_g^2(\mathbb{R}^+; H_0^1)\) is a Hilbert space with respect to the inner product
        \[
        (\cdot, \cdot)_{L_g^2(\mathbb{R}^+; H_0^1)} := \int_0^{\infty} g(s)\, (\cdot_x, \cdot_x) \, ds
        \]
        and the corresponding norm
        \[
        \|\cdot\|_{L_g^2(\mathbb{R}^+; H_0^1)} := \Bigl( \int_0^{\infty} g(s)\, \|\cdot_x\|^2 \, ds \Bigr)^{1/2}.
        \]
\end{itemize}

Let \(v = u_t\), \(\Phi = \varphi_t\), \(\Psi = \psi_t\). To reformulate the problem as an abstract Cauchy problem, we introduce the phase space
\[
\mathcal{H}_1 := H_*^1 \times L_*^2 \times H_*^1 \times L_*^2 \times H_0^1 \times L^2 \times L^2 \times L^2,
\]
which is a Hilbert space endowed with the inner product
\begin{align*}
	(U_1, U_2)_{\mathcal{H}_1} := &\rho(v_1, v_2) + \alpha (u_{1x}, u_{2x}) + \rho_1(\Phi_1, \Phi_2) + \rho_2(\Psi_1, \Psi_2) + b (\psi_{1x},\psi_{2x}) \\
	&+ \beta (\varphi_1 - u_1, \varphi_2 - u_2) + \kappa(\varphi_{1x} + \psi_1, \varphi_{2x} + \psi_2) + \rho_3(\theta_1, \theta_2) + \tau(q_1, q_2),
\end{align*}
and the associated norm
\[
\|U\|_{\mathcal{H}_1}^2 := \rho\|v\|^2 + \alpha\|u_x\|^2 + \rho_1\|\Phi\|^2 + \rho_2\|\Psi\|^2 + b\|\psi_x\|^2 + \beta\|\varphi - u\|^2 + \kappa\|\varphi_x + \psi\|^2 + \rho_3\|\theta\|^2 + \tau \|q\| ^2,
\]
where \(U := (u, v, \varphi, \Phi, \psi, \Psi, \theta, q) \in \mathcal{H}_1\) and \(U_i := (u_i, v_i, \varphi_i, \Phi_i, \psi_i, \Psi_i, \theta_i, q_i) \in \mathcal{H}_1\) for \(i = 1, 2\).

The system \eqref{eq:without memory}-\eqref{without memory:initial condition} can be rewritten as the following abstract Cauchy problem
\begin{equation}\label{model-abstract-1}
	\begin{cases}
		\dfrac{d}{dt}U = \mathcal{A}_1U, & t > 0, \\
		U(0) =  U_0 :=(u_0, v_0, \varphi_0, \Phi_0, \psi_0, \Psi_0, \theta_0, q_0),
	\end{cases}
\end{equation}
where \(\mathcal{A}_1: \mathcal{D}(\mathcal{A}_1) \subset \mathcal{H}_1 \to \mathcal{H}_1\) is a linear operator defined by \begin{equation}\label{definition:A-1}
	\mathcal{A}_1 U := \begin{pmatrix}
		v \\
		\rho^{-1} \left[ \alpha u_{xx} + \beta (\varphi - u) - \gamma v \right] \\
		\Phi \\
		\rho_1^{-1} \left[ \kappa (\varphi_x + \psi)_x - \beta (\varphi - u) - m \theta_x \right] \\
		\Psi \\
		\rho_2^{-1} \left[ b \psi_{xx} - \kappa (\varphi_x + \psi) + m\theta \right] \\
		\rho_3^{-1} \left[ -q_x - m (\Phi_x + \Psi) \right] \\
		\tau^{-1}(-\sigma q - \theta_x)
	\end{pmatrix}
\end{equation}
with domain
\begin{align*}
	\mathcal{D}(\mathcal{A}_1) := \bigg\{ U \in \mathcal{H}_1 :
	v, \Phi \in H_*^1;  u_x, \varphi_x, \theta, \Psi \in H_0^1;
	\psi \in H^2; q \in H^1
	\bigg\}.
\end{align*}

The main results of this section are as follows
\begin{theorem}[Well-posedness]\label{Well-Posedness-1}
	The operator $\mathcal{A}_1$ defined in \eqref{definition:A-1} acts as the infinitesimal generator of a $C_0$-contraction semigroup $\{e^{t\mathcal{A}_1}\}_{t \ge 0}$. Accordingly, if $U_0 \in \mathcal{H}_1$, then Problem \eqref{model-abstract-1} has a unique mild solution $U(t) = e^{t\mathcal{A}_1}U_0$ belonging to $C(\mathbb{R}^+, \mathcal{H}_1)$; if $U_0 \in \mathcal{D} (\mathcal{A}_1)$ (the domain of $\mathcal{A}_1$), then $U(t) = e^{t\mathcal{A}_1}U_0$ is a classical solution to Problem \eqref{model-abstract-1} and lies in $C(\mathbb{R}^+, \mathcal{D}(\mathcal{A}_1)) \cap C^1(\mathbb{R}^+, \mathcal{H}_1)$.
\end{theorem}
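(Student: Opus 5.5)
We apply the Lumer--Phillips theorem: we show that $\mathcal{A}_1$ is dissipative on $\mathcal{H}_1$ and that $I-\mathcal{A}_1$ (equivalently $-\mathcal{A}_1$, since $0\in\rho(\mathcal{A}_1)$ then gives $\lambda I-\mathcal{A}_1$ onto for small $\lambda>0$) is surjective. Density of $\mathcal{D}(\mathcal{A}_1)$ follows because it contains $C_c^\infty$-type functions adjusted to the mean-zero and boundary constraints, which are dense in each factor. Alternatively, density follows automatically in a reflexive space from dissipativity plus maximality. The regularity statements for mild and classical solutions are then the standard consequences of semigroup theory (e.g.\ Pazy).

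\textbf{Dissipativity.} For $U\in\mathcal{D}(\mathcal{A}_1)$ we compute $\mathrm{Re}(\mathcal{A}_1U,U)_{\mathcal{H}_1}$ term by term, integrating by parts with the boundary conditions: $u_x,\varphi_x,\psi,\Psi,\theta$ vanish at $0,L$. The elastic and coupling terms cancel pairwise in the usual way: $\alpha(u_x,v_x)$ against $\alpha(u_{xx},v)$, $\beta$-terms, $\kappa(\varphi_x+\psi,\Phi_x+\Psi)$ against the $\kappa$-terms in the $\Phi,\Psi$ equations, and $b$-terms. The thermal couplings also cancel. The term $-m(\theta_x,\Phi)+m(\theta,\Psi)=m(\theta,\Phi_x+\Psi)$ after integrating by parts with $\theta\in H_0^1$, and it cancels with $-m(\Phi_x+\Psi,\theta)$ from the $\theta$ equation. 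Likewise $-(q_x,\theta)-(\theta_x,q)$ has zero real part since $\theta(0)=\theta(L)=0$. We obtain
\[
\mathrm{Re}(\mathcal{A}_1U,U)_{\mathcal{H}_1}=-\gamma\|v\|^2-\sigma\|q\|^2\le 0 .
\]
We must also check that $\mathcal{A}_1$ maps into $\mathcal{H}_1$, i.e.\ that the second and fourth components have zero mean. For the $v$-component, $\int_0^L u_{xx}=0$ by $u_x\in H_0^1$, $\int v=0$, but $\int_0^L(\varphi-u)$ need not vanish a priori. Since $u,\varphi\in H^1_*$ have zero mean, it does vanish. The $\Phi$-component needs $\int(\varphi_x+\psi)_x=0$, which holds because $\varphi_x,\psi$ vanish at the boundary, and $\int\theta_x=0$.

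\textbf{Maximality (main step).} Given $F=(f_1,\dots,f_8)\in\mathcal{H}_1$, we solve $-\mathcal{A}_1U=F$. The first, third and fifth equations give $v=-f_1$, $\Phi=-f_3$, $\Psi=-f_5$ directly. The last equation gives $\theta_x=\tau f_8-\sigma q$. The seventh gives $q_x=\rho_3 f_7+m(f_{3x}+f_5)$, so $q(x)=q(0)+\int_0^x(\cdots)$. The constant $q(0)$ is fixed by requiring $\theta=\int_0^x(\tau f_8-\sigma q)$ to satisfy $\theta(L)=0$, which is solvable since $\sigma L\neq 0$. The regularity $f_3\in H^1_*$ ensures $q\in H^1$ and $\theta\in H_0^1$. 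With $\theta$ known, the remaining equations form an elliptic system for $(u,\varphi,\psi)$:
\[
-\alpha u_{xx}-\beta(\varphi-u)=g_1,\qquad -\kappa(\varphi_x+\psi)_x+\beta(\varphi-u)=g_2,\qquad -b\psi_{xx}+\kappa(\varphi_x+\psi)=g_3,
\]
with $g_i\in L^2$ built from $f$'s, $\theta$, $v$, and $g_1,g_2$ of zero mean. We solve this by Lax--Milgram on $H^1_*\times H^1_*\times H^1_0$ with the bilinear form $\alpha(u_x,\tilde u_x)+\beta(\varphi-u,\tilde\varphi-\tilde u)+\kappa(\varphi_x+\psi,\tilde\varphi_x+\tilde\psi)+b(\psi_x,\tilde\psi_x)$. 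Coercivity is the point needing care: if the form vanishes, then $u_x=0$ and $\psi_x=0$, so $u=0$ and $\psi=0$ by the mean-zero and Dirichlet conditions, hence $\varphi_x=0$ and $\varphi=0$. A quantitative version uses Poincar\'e: $\|\psi\|\le c\|\psi_x\|$ controls $\|\varphi_x\|\le\|\varphi_x+\psi\|+\|\psi\|$. Zero-mean test functions suffice because the $g_1,g_2$ have zero mean, so the weak equations hold against all $H^1$ test functions. Elliptic regularity then yields $u_x,\varphi_x\in H_0^1$ (Neumann conditions recovered naturally) and $\psi\in H^2$, so $U\in\mathcal{D}(\mathcal{A}_1)$. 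The bounded inverse shows $0\in\rho(\mathcal{A}_1)$.

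Lumer--Phillips then gives the $C_0$-contraction semigroup, and the solution statements follow. The expected obstacle is bookkeeping of the mean-zero constraints and the constant in $q$, rather than any deep estimate.
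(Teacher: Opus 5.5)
Your proof is correct. It shares the paper's skeleton: Lumer--Phillips, the same energy identity $\operatorname{Re}(\mathcal{A}_1U,U)_{\mathcal{H}_1}=-\gamma\|v\|^2-\sigma\|q\|^2$, and maximality via $0\in\varrho(\mathcal{A}_1)$ with Lax--Milgram. The maximality step, however, is organized differently.

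The paper eliminates $q$ to get $-\theta_{xx}=g_4\in H^{-1}$. It then solves for $(u,\varphi,\psi,\theta)$ simultaneously on $H^1_*\times H^1_*\times H^1_0\times H^1_0$ with a non-symmetric form, weighting the $\theta$-part by $m^2c_p/\kappa$ so that the one-sided coupling $-m(\theta,\tilde\varphi_x+\tilde\psi)$ can be absorbed. You instead use that at $\lambda=0$ the thermal block only involves the known data $\Phi=-f_3$, $\Psi=-f_5$. You integrate the first-order system for $(q,\theta)$ explicitly, with the constant $q(0)$ fixed by $\theta(L)=0$. You are then left with a symmetric, transparently coercive problem for $(u,\varphi,\psi)$ with $\theta$ as data.

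For the Neumann conditions, the paper builds endpoint-concentrated test functions $\tilde u_m\in H^1_*$ and passes to the limit. You instead note that $\int_0^L g_1=\int_0^L g_2=\int_0^L(\varphi-u)=0$, so the weak equations can be tested against all of $H^1$. This gives the interior equations and the natural boundary conditions at once. It also justifies the distributional equations that the paper tacitly uses when it invokes elliptic regularity.

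You also check two points the paper skips. First, that $\mathcal{A}_1$ maps $\mathcal{D}(\mathcal{A}_1)$ into $\mathcal{H}_1$ (zero mean of the second and fourth components). Second, density of the domain; for this, rely on the reflexivity argument, since your first remark is only a sketch. Injectivity of $\mathcal{A}_1$ is implicit in your construction because every step is forced, and it is worth saying so explicitly.

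As for what each route buys: the paper's second-order formulation for $\theta$ is the one that would carry over to higher dimensions. Yours is more elementary and makes the bound $\|U\|_{\mathcal{H}_1}\le C\|F\|_{\mathcal{H}_1}$ immediate in one dimension.
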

\begin{theorem}[Polynomial stability]\label{Polynomial stability}
   Let \(\{e^{\mathcal{A}_1 t}\}_{t \geq 0}\) be the \(C_0\)-semigroup generated by the operator \(\mathcal{A}_1: \mathcal{D}(\mathcal{A}_1) \subset \mathcal{H}_1 \to \mathcal{H}_1\) (see \eqref{definition:A-1} for the operator definition and \(\mathcal{D}(\mathcal{A}_1)\) for its domain). Then \(\{e^{\mathcal{A}_1 t}\}_{t \geq 0}\) is polynomially stable, and the decay rate depends on the relationship between the structural parameters of the bridge deck
    \begin{enumerate}[(i)]
    	\item If \(\frac{\rho_1}{\kappa} = \frac{\rho_2}{b}\), there exists a positive constant \(C > 0\) (independent of \(t\)) such that
        \[
        \|e^{\mathcal{A}_1 t} U_0\|_{\mathcal{H}_1} \leq \frac{C}{t^{1/2}} \|U_0\|_{\mathcal{D}(\mathcal{A}_1)} \quad \text{for all } t > 0 \text{ and } U_0 \in \mathcal{D}(\mathcal{A}_1);
        \]
        \item If \(\frac{\rho_1}{\kappa} \neq \frac{\rho_2}{b}\), there exists a positive constant \(C > 0\) (independent of \(t\)) such that
        \[
        \|e^{\mathcal{A}_1 t} U_0\|_{\mathcal{H}_1} \leq \frac{C}{t^{1/4}} \|U_0\|_{\mathcal{D}(\mathcal{A}_1)} \quad \text{for all } t > 0 \text{ and } U_0 \in \mathcal{D}(\mathcal{A}_1),
        \]
    \end{enumerate}
    where $U_0 \in \mathcal{D}(\mathcal{A}_1):=\|\mathcal{A}_1U_0\|_{\mathcal{H}_1}$.
  \end{theorem}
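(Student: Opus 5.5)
We will use the Borichev--Tomilov characterization of polynomial stability. For a bounded $C_0$-semigroup on a Hilbert space with $i\mathbb{R}\subset\rho(\mathcal{A}_1)$, the estimate $\|e^{\mathcal{A}_1 t}U_0\|_{\mathcal{H}_1}\le C t^{-1/\ell}\|U_0\|_{\mathcal{D}(\mathcal{A}_1)}$ is equivalent to
\[
\limsup_{|\lambda|\to\infty}|\lambda|^{-\ell}\|(i\lambda-\mathcal{A}_1)^{-1}\|<\infty .
\]
So the task reduces to two things. First, we show $i\mathbb{R}\subset\rho(\mathcal{A}_1)$. Second, we prove the resolvent bound with $\ell=2$ when $\rho_1/\kappa=\rho_2/b$ and with $\ell=4$ otherwise. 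Theorem \ref{Well-Posedness-1} already gives the contraction semigroup.

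\emph{Step 1 (imaginary axis).} By dissipativity,
\[
\mathrm{Re}(\mathcal{A}_1U,U)_{\mathcal{H}_1}=-\gamma\|v\|^2-\sigma\|q\|^2 .
\]
For $\lambda=0$ we solve $\mathcal{A}_1U=F$ directly via Lax--Milgram on the elliptic system for $(u,\varphi,\psi)$ and $(\theta,q)$; the mean-zero spaces take care of the Neumann parts. For $\lambda\neq0$ we argue by contradiction in the standard way. Take a sequence $\lambda_n\to\lambda_0\ne0$ and unit $U_n\in\mathcal{D}(\mathcal{A}_1)$ with $(i\lambda_n-\mathcal{A}_1)U_n\to0$. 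The dissipation gives $v_n,q_n\to0$. The equations then give $u_n\to0$ and $\theta_{n}\to0$; the latter comes from $\tau q_t+\sigma q+\theta_x=0$, which yields $\theta_{nx}\to0$ in $H^{-1}$ and hence $\theta_n\to0$ by compactness. Compactness of the resolvent also lets us pass to a limit $U$ with $\|U\|=1$ solving an undamped ODE system. Unique continuation for that system forces $U=0$, a contradiction. Alternatively, once the resolvent estimate of Step 2 holds for every fixed $\lambda$, injectivity plus compactness already suffice.

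\emph{Step 2 (resolvent estimate).} Let $i\lambda U-\mathcal{A}_1U=F$ with $|\lambda|\ge1$. Dissipation gives
\[
\gamma\|v\|^2+\sigma\|q\|^2\le\|U\|\|F\|.
\]
We then run a sequence of multiplier estimates.
\begin{enumerate}[(a)]
\item From the $q$-equation, $\theta_x=-i\lambda\tau q-\sigma q+\tau F_8$. Multiplying the $\theta$-equation by suitable antiderivatives controls $\|\theta\|$ in terms of $|\lambda|\,\|q\|$ plus lower order terms.
\item Multiplying the $\theta$-equation by $\overline{(\varphi_x+\psi)}$ and using $i\lambda(\varphi_x+\psi)=\Phi_x+\Psi+F$ gives $m\|\Phi_x+\Psi\|$-type terms. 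Combining with the $\varphi,\psi$ equations, we estimate $\kappa\|\varphi_x+\psi\|^2$ in terms of $\|\Phi\|^2$-type quantities; the shear combination couples to $\theta$.
\item Multiplying the $\varphi$-equation by $\overline{\varphi}$-type terms and the $\psi$-equation by $\overline{\psi}$ gives equipartition estimates:
\[
\rho_1\|\Phi\|^2\approx\kappa\|\varphi_x+\psi\|^2+\dots,\qquad \rho_2\|\Psi\|^2\approx b\|\psi_x\|^2+\dots .
\]
\item The cable equation, via $\gamma v$ damping and the coupling $\beta(\varphi-u)$, controls $u$ and $\alpha\|u_x\|^2$.
\end{enumerate}

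\emph{Step 3 (main obstacle).} The key difficulty is to transfer dissipation from the shear force $\varphi_x+\psi$ to $\psi_x$ and $\Psi$. We multiply the $\psi$-equation by $\overline{(\varphi_x+\psi)}$ and the $\varphi$-equation by $\overline{\psi_x}$, then add. This produces the cross term
\[
\lambda^2\Bigl(\rho_2-\frac{b\rho_1}{\kappa}\Bigr)(\psi,\overline{\varphi_x})
\]
together with boundary terms $b\psi_x\overline{(\varphi_x+\psi)}\big|_0^L$. We handle the boundary terms by the usual cutoff/pointwise estimates; the condition $\varphi_x=0$ at the endpoints helps here.

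When $\rho_1/\kappa=\rho_2/b$, the cross term vanishes. Each transfer then costs one power of $|\lambda|$, since $\theta$ is only controlled through $|\lambda|\,\|q\|$, which yields $\|U\|\lesssim|\lambda|^2\|F\|$, i.e.\ $\ell=2$.

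When the ratios differ, we instead substitute $\lambda^2\psi$ from the $\psi$-equation, which loses another factor of $|\lambda|^2$. This gives $\|U\|\lesssim|\lambda|^4\|F\|$, i.e.\ $\ell=4$.

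The bookkeeping of $\lambda$-powers via Young's inequality with $\varepsilon$ is where I expect most effort. In particular, the $\theta$-control from the Cattaneo law is weaker than the Fourier case (a $\lambda$ factor lost), which is precisely why the rate here is $t^{-1/2}$ instead of exponential.
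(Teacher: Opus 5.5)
Your architecture matches the paper's: Borichev--Tomilov, $i\mathbb{R}\subset\rho(\mathcal{A}_1)$ by compactness, dissipation, and a multiplier identity in which $\rho_2-b\rho_1/\kappa$ multiplies a cross term.

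Some points are fine or simpler than you make them:
\begin{itemize}
\item The limiting eigenproblem in Step 1 collapses by the elementary cascade $v=q=0\Rightarrow u=0\Rightarrow\varphi=\Phi=0$ and $\theta_x=0\Rightarrow\theta=0\Rightarrow m\Psi=0$, so no unique continuation is needed.
\item The boundary terms $b\psi_x\overline{(\varphi_x+\psi)}\big|_0^L$ vanish identically, since $\varphi_x+\psi\in H_0^1$.
\item In the equal-speed case your identity closes, and the only loss is the term $(\theta_x,\psi_x)$.
\end{itemize}

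\textbf{The gap is the distinct-speed case.} Your cross term $\lambda^2(\rho_2-b\rho_1/\kappa)(\psi,\varphi_x)$ has size $|\lambda|\,\|\Psi\|\,\|\varphi_x\|\sim|\lambda|\,\|U\|^2$, so it cannot be absorbed as it stands. Your remedy does not help. Substituting $\lambda^2\psi$ from the $\psi$-equation brings back $b(\psi_x,\varphi_{xx})$. Eliminating $(\varphi_x+\psi)_x$ with the $\varphi$-equation then returns $\frac{b\rho_1}{\kappa}\lambda^2(\psi,\varphi_x)-b\|\psi_x\|^2$ plus lower-order terms. That is the same identity again, so the claimed extra factor $|\lambda|^2$ is never derived.

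The missing idea is the splitting $\varphi_x=(\varphi_x+\psi)-\psi$.
\begin{itemize}
\item The $-\psi$ part produces a multiple of $\lambda^2\|\psi\|^2$. Together with the $\rho_2\lambda^2\|\psi\|^2$ that your $\psi$-multiplier already generates, it leaves $\frac{b\rho_1}{\kappa}\lambda^2\|\psi\|^2\approx\frac{b\rho_1}{\kappa}\|\Psi\|^2$. This has a definite sign whatever the sign of $\rho_2-b\rho_1/\kappa$.
\item The remainder is bounded by $C|\lambda|\,\|\Psi\|\,\|\varphi_x+\psi\|$. This is exactly the paper's term $i\lambda\bigl(\frac{\kappa\rho_2}{b\rho_1}-1\bigr)(\varphi_x+\psi,\Psi)$ in \eqref{l3}. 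The paper produces it directly by pairing $i\lambda(\varphi_x+\psi)-(\Phi_x+\Psi)=f_{3x}+f_5$ with $\Psi$.
\end{itemize}

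\textbf{Getting $\ell=4$ also needs a quantitative bound on the shear force.} With the paper's $\theta$-estimate this is $\|\varphi_x+\psi\|^2\lesssim\|U\|\|F\|+\|q\|(\|\Phi\|+\|\Psi\|)\lesssim\|U\|^{3/2}\|F\|^{1/2}+\|U\|\|F\|$. Young's inequality then gives $|\lambda|\,\|U\|^{7/4}\|F\|^{1/4}\le\epsilon\|U\|^2+C_\epsilon\lambda^8\|F\|^2$.

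Your Step 2 does not deliver this bound, because as written it is circular. Item (b) bounds $\kappa\|\varphi_x+\psi\|^2$ by ``$\|\Phi\|^2$-type quantities'', while (c) bounds $\|\Phi\|^2$ by $\kappa\|\varphi_x+\psi\|^2$. In Lemma \ref{lemma-varphix}, $\Phi$ enters only through $(q,\Phi)$: it comes from $(q,(\varphi_x+\psi)_x)$ and the $\varphi$-equation, after dividing by $\lambda$. So the shear force must be estimated first, from dissipative quantities alone, and only afterwards $\Phi$ by equipartition.

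\textbf{The exponent count for $\ell=2$ also needs correcting.} The single lossy term $(\theta_x,\psi_x)$, with $\|\theta_x\|\lesssim|\lambda|\|q\|+\|F\|$, gives $|\lambda|\|U\|^{3/2}\|F\|^{1/2}\le\epsilon\|U\|^2+C_\epsilon\lambda^4\|F\|^2$. It is the square-root dissipation bound $\|q\|\lesssim(\|U\|\|F\|)^{1/2}$, not ``one power per transfer'', that doubles the exponent.

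Your cruder Poincar\'e bound $\|\theta\|\lesssim|\lambda|\|q\|+\|F\|$ would still give exponents $2$ and $4$. It yields $\|\varphi_x+\psi\|^2\lesssim\lambda^2\|U\|\|F\|+\|U\|^{3/2}\|F\|^{1/2}$, which again leads to $\lambda^4$ and $\lambda^8$. So the paper's antiderivative multiplier in Lemma \ref{lemma-theta} is not essential for this theorem.
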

  \begin{theorem}[Non-exponential stability]\label{Non-exponential Stability}
 Let the parameter-dependent coefficients $\chi_0$ and $\chi_1$ be defined as
        \[
        \chi_0 = \kappa\left(\dfrac{\rho_2}{b} - \dfrac{\rho_1}{\kappa}\right)\left(\rho_3 - \dfrac{\rho_2}{b\tau}\right) + \dfrac{m^2 \rho_2}{b}, \quad \chi_1 = -m^2 - \kappa\rho_3 + \dfrac{\kappa\rho_2}{\tau b}.
        \]
        If either $\chi_0 \neq 0$, or $\chi_0 = 0$ and $\chi_1 = 0$, then the $C_0$-semigroup $\{e^{\mathcal{A}_1 t}\}_{t \geq 0}$ generated by $\mathcal{A}_1$ is not exponentially stable.
  \end{theorem}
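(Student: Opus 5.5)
We will argue by contradiction using the Gearhart–Prüss–Huang characterization. Since $\mathcal{A}_1$ generates a $C_0$-contraction semigroup (Theorem \ref{Well-Posedness-1}), exponential stability is equivalent to $i\mathbb{R}\subset\varrho(\mathcal{A}_1)$ together with $\sup_{\lambda\in\mathbb{R}}\|(i\lambda-\mathcal{A}_1)^{-1}\|<\infty$. It therefore suffices to construct real numbers $\lambda_n\to\infty$, bounded data $F_n\in\mathcal{H}_1$ and solutions $U_n\in\mathcal{D}(\mathcal{A}_1)$ of $(i\lambda_n-\mathcal{A}_1)U_n=F_n$ with $\|U_n\|_{\mathcal{H}_1}\to\infty$. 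If some $i\lambda_n$ turns out to lie in the spectrum, the conclusion is immediate anyway.

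We will work with a single Fourier mode adapted to the boundary conditions \eqref{without memory:boundary condition}. Set $\mu_n=n\pi/L$ and make the ansatz
\[
u=A_n\cos(\mu_n x),\quad \varphi=B_n\cos(\mu_n x),\quad \psi=C_n\sin(\mu_n x),\quad \theta=D_n\sin(\mu_n x),\quad q=E_n\cos(\mu_n x).
\]
The velocities are then $v=i\lambda A_n\cos$, $\Phi=i\lambda B_n\cos$ and $\Psi=i\lambda C_n\sin$, up to the contributions of $F_n$. With $n\ge1$, the cosine components have zero mean, so they belong to $H^1_*$ and $L^2_*$, and all domain requirements are met.

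For the data we take $F_n=(0,0,0,0,0,\rho_2^{-1}\sin(\mu_n x),0,0)$, which is uniformly bounded. Inserting the ansatz into the resolvent equation removes the spatial dependence and leaves a $5\times5$ linear algebraic system for $(A_n,B_n,C_n,D_n,E_n)$:
\begin{align*}
&(\alpha\mu_n^2+\beta-\rho\lambda^2+i\gamma\lambda)A_n-\beta B_n=0,\\
&(\kappa\mu_n^2+\beta-\rho_1\lambda^2)B_n-\beta A_n+\kappa\mu_n C_n+m\mu_n D_n=0,\\
&(b\mu_n^2+\kappa-\rho_2\lambda^2)C_n+\kappa\mu_n B_n-mD_n=1,
\end{align*}
together with the two thermal equations. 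From the thermal equations we get $E_n=-\mu_n D_n/(i\tau\lambda+\sigma)$. Substituting this into the temperature equation gives $D_n$ as $\lambda$ times a combination of $-\mu_nB_n+C_n$, divided by $\rho_3 i\lambda+\mu_n^2/(i\tau\lambda+\sigma)$. The first equation gives $A_n=\beta B_n/(\alpha\mu_n^2+\beta-\rho\lambda^2+i\gamma\lambda)$, which is a lower-order perturbation.

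Next we choose $\lambda_n$ on the rotational branch, $\rho_2\lambda_n^2=b\mu_n^2+\kappa$, so that the principal coefficient of $C_n$ vanishes. We then solve the remaining $2\times2$ system in $(B_n,C_n)$ and expand its determinant in powers of $\mu_n$. The leading coefficient should be a multiple of $\chi_0$, since it combines the mismatch $\frac{\rho_2}{b}-\frac{\rho_1}{\kappa}$ in the $\varphi$-equation with the Cattaneo factor $\rho_3-\frac{\rho_2}{b\tau}$ from the thermal elimination and the coupling $m^2\rho_2/b$. The next-order coefficient should be governed by $\chi_1$. We expect to obtain $C_n\sim c\,\mu_n^{k}$ with $c\neq0$ and an exponent $k$ for which $|\lambda_n C_n|$, and hence $\|U_n\|_{\mathcal{H}_1}\ge\sqrt{\rho_2}\,|\lambda_n C_n|\,\|\sin(\mu_n\cdot)\|$, is unbounded. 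This is expected both when $\chi_0\neq0$ and when $\chi_0=\chi_1=0$; in the latter case a cancellation in the numerator gives the same conclusion. If the growth falls short, we will instead shift $\lambda_n$ by a correction $\lambda_n^2=\frac{b}{\rho_2}\mu_n^2+\frac{\kappa}{\rho_2}+\epsilon_n$ tuned to make the determinant small.

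The main obstacle is this asymptotic bookkeeping. We must track how the thermal elimination introduces the factor $\rho_3-\frac{\rho_2}{b\tau}$, identify exactly where $\chi_0$ and $\chi_1$ appear, and verify that under each hypothesis the ratio numerator/determinant forces $\|U_n\|_{\mathcal{H}_1}\to\infty$ while $\|F_n\|_{\mathcal{H}_1}$ stays bounded. Once that is done, Gearhart–Prüss–Huang yields that $\{e^{\mathcal{A}_1t}\}_{t\ge0}$ is not exponentially stable.
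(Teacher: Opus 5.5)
Your strategy is the paper's: a single mode $\cos/\sin(\mu_nx)$, the forcing $\rho_2^{-1}\sin(\mu_nx)$ in the $\Psi$-slot, and the bound $\|U_n\|_{\mathcal{H}_1}\ge\sqrt{\rho_2}\,\lambda_n|C_n|\sqrt{L/2}$. The paper takes $\rho_2\lambda_n^2=b\mu_n^2$, so the coefficient of $C_n$ is $\kappa$; you take $\rho_2\lambda_n^2=b\mu_n^2+\kappa$. Both choices work.

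The gap is that the proposal stops exactly where the proof begins. The behaviour of $C_n$ is only ``expected'', and your heuristic for it puts $\chi_0$ in the wrong place. $\chi_0$ is the leading coefficient of the cofactor, i.e.\ of the \emph{numerator} of $C_n$, not of the determinant. Numerator and determinant are of the same order in $\lambda_n$, so $\chi_0\neq0$ keeps $|C_n|$ bounded below. The divergence then comes from the factor $\lambda_n$ in $\Psi_n=i\lambda_nC_n\sin(\mu_nx)$. If $\chi_0$ led the determinant, $\chi_0\neq0$ would work \emph{against} blow-up.

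There are also sign slips. Since $\varphi_x+\psi=(C_n-\mu_nB_n)\sin(\mu_nx)$, the couplings are $-\kappa\mu_nC_n$ and $-\kappa\mu_nB_n$. With your signs the cross term of the determinant changes sign.

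To close the gap, set $\omega^2=\rho_2/b$, $a=\rho_3-\omega^2/\tau$, $d_n=\alpha\mu_n^2+\beta-\rho\lambda_n^2+i\gamma\lambda_n$, $p_n=i\lambda_n\rho_3+\mu_n^2/(i\tau\lambda_n+\sigma)$ and $r_n=i\lambda_nm^2/p_n$. The thermal equations give $mD_n=-r_n(C_n-\mu_nB_n)$. With $A_n=\beta B_n/d_n$ the system becomes symmetric:
\[
\begin{pmatrix} Q_1+r_n\mu_n^2 & -(\kappa+r_n)\mu_n\\ -(\kappa+r_n)\mu_n & Q_2+r_n\end{pmatrix}\begin{pmatrix}B_n\\ C_n\end{pmatrix}=\begin{pmatrix}0\\ 1\end{pmatrix},\qquad C_n=\frac{Q_1+r_n\mu_n^2}{\Delta_n},
\]
where $Q_1=\kappa\mu_n^2+\beta-\rho_1\lambda_n^2-\beta^2/d_n$ and $Q_2=b\mu_n^2+\kappa-\rho_2\lambda_n^2$. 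Here $\Delta_n\neq0$, since a nontrivial kernel would give an eigenvector of $\mathcal{A}_1$ at $i\lambda_n$, contradicting $i\mathbb{R}\subset\varrho(\mathcal{A}_1)$.

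If $a\neq0$, then $r_n=m^2/a+O(\lambda_n^{-1})$ and $Q_1+r_n\mu_n^2=\frac{\chi_0}{a}\lambda_n^2+O(\lambda_n)$.
\begin{itemize}
\item With the paper's choice the determinant factors as $\Delta_n=(\kappa+r_n)(Q_1-\kappa\mu_n^2)\sim\rho_1\chi_1\lambda_n^2/a$. This is where $\chi_1$ enters, and $\chi_1=0$ only makes $\Delta_n$ smaller.
\item With your choice, $\Delta_n=r_nQ_1-\kappa^2\mu_n^2-2\kappa r_n\mu_n^2=-\frac{\lambda_n^2}{a}(m^2\rho_1-\kappa\omega^2\chi_1)+O(\lambda_n)$.
\end{itemize}
Either way $\Delta_n=O(\lambda_n^2)$, so $\chi_0\neq0$ gives $|C_n|\ge c>0$ and $\|U_n\|_{\mathcal{H}_1}\to\infty$. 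No shift of $\lambda_n$ is needed.

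If $a=0$, then $r_n\sim i\lambda_nm^2\tau^2/(\sigma\omega^2)$, and $C_n\to-\omega^2/(\rho_1+\kappa\omega^2)$ for your choice ($-\omega^2/\rho_1$ for the paper's).

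Finally, the case $\chi_0=\chi_1=0$ needs no cancellation argument, because it is empty. Indeed $\chi_0+\frac{\rho_2}{b}\chi_1=-\rho_1a$, so $\chi_0=\chi_1=0$ forces $a=0$, and then $\chi_1=-m^2\neq0$.
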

    \begin{remark}
  	It should be noted that for the case where $\chi_0 = 0$ and $\chi_1 \neq 0$, the exponential stability of the semigroup $\{e^{\mathcal{A}_1 t}\}_{t \geq 0}$ remains an open problem and requires further investigation.
  \end{remark}

\subsection{Well-Posedness}\label{sec2.1}
The proof of Theorem \ref{Well-Posedness-1} proceeds via an application of the Lumer-Phillips Theorem \cite{MR710486}. Consequently, the demonstration hinges on verifying the following two conditions of the theorem: (1) $\mathcal{A}_1$ is dissipative; (2) $\mathcal{A}_1$ is maximal. We now proceed by verifying these two conditions sequentially in Lemmas \ref{dissipative-1} and \ref{maximal-1}.

\begin{lemma}\label{dissipative-1}
	The operator $\mathcal{A}_1$ defined in \eqref{definition:A-1} is dissipative.
\end{lemma}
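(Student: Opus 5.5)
We compute $\mathrm{Re}(\mathcal{A}_1U,U)_{\mathcal{H}_1}$ for an arbitrary $U=(u,v,\varphi,\Phi,\psi,\Psi,\theta,q)\in\mathcal{D}(\mathcal{A}_1)$, working in the complexified space, and aim to show
\[
\mathrm{Re}(\mathcal{A}_1U,U)_{\mathcal{H}_1} = -\gamma\|v\|^2-\sigma\|q\|^2\le 0 .
\]
Inserting the components of \eqref{definition:A-1} into the inner product gives the nine terms
\begin{align*}
(\mathcal{A}_1U,U)_{\mathcal{H}_1}={}&(\alpha u_{xx}+\beta(\varphi-u)-\gamma v,v)+\alpha(v_x,u_x)+(\kappa(\varphi_x+\psi)_x-\beta(\varphi-u)-m\theta_x,\Phi)\\
&+(b\psi_{xx}-\kappa(\varphi_x+\psi)+m\theta,\Psi)+b(\Psi_x,\psi_x)+\beta(\Phi-v,\varphi-u)\\
&+\kappa(\Phi_x+\Psi,\varphi_x+\psi)+(-q_x-m(\Phi_x+\Psi),\theta)+(-\sigma q-\theta_x,q).
\end{align*}
The weights $\rho,\rho_1,\rho_2,\rho_3,\tau$ cancel against the inverse factors in $\mathcal{A}_1$.

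Next we integrate by parts in every second-order and first-order spatial term, using the boundary information in $\mathcal{D}(\mathcal{A}_1)$.
\begin{itemize}
\item $u_x,\varphi_x\in H_0^1$ make the boundary terms in $(\alpha u_{xx},v)$ and $(\kappa\varphi_{xx},\Phi)$ vanish.
\item $\psi\in H_0^1$ together with $\Psi\in H_0^1$ handles $(b\psi_{xx},\Psi)$ and the $\psi$-part of $(\kappa(\varphi_x+\psi)_x,\Phi)$.
\item $\theta\in H_0^1$ disposes of the boundary terms in $(-m\theta_x,\Phi)$, $(-q_x,\theta)$ and $(-\theta_x,q)$, even though $q$ has no boundary condition.
\end{itemize}
After this step we obtain:
\begin{itemize}
\item $(\alpha u_{xx},v)=-\alpha(u_x,v_x)$, which pairs with $\alpha(v_x,u_x)$ to give a purely imaginary quantity.
\item $(\kappa(\varphi_x+\psi)_x,\Phi)=-\kappa(\varphi_x+\psi,\Phi_x)$, which combines with $-\kappa(\varphi_x+\psi,\Psi)$ into $-\kappa(\varphi_x+\psi,\Phi_x+\Psi)$. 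This cancels in real part against $\kappa(\Phi_x+\Psi,\varphi_x+\psi)$.
\item The $\beta$-terms $\beta(\varphi-u,v)-\beta(\varphi-u,\Phi)+\beta(\Phi-v,\varphi-u)$ likewise have zero real part.
\item $(b\psi_{xx},\Psi)+b(\Psi_x,\psi_x)$ is purely imaginary.
\item For the thermal coupling, $-m(\theta_x,\Phi)=m(\theta,\Phi_x)$. Adding $m(\theta,\Psi)$ gives $m(\theta,\Phi_x+\Psi)$, whose real part cancels with that of $-m(\Phi_x+\Psi,\theta)$.
\item $(-q_x,\theta)=(q,\theta_x)$ cancels in real part with $-(\theta_x,q)$.
\end{itemize}
Only $-\gamma\|v\|^2-\sigma\|q\|^2$ survives, which proves dissipativity.

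The main obstacle is the boundary bookkeeping. The domain does not state $\Phi_x$ or $\Psi$ boundary values for every pairing, so each boundary term $[\,\cdot\,]_0^L$ must be checked to be killed by some factor that genuinely vanishes. In particular:
\begin{itemize}
\item $[\kappa(\varphi_x+\psi)\bar\Phi]_0^L$ vanishes because $\varphi_x$ and $\psi$ both vanish at the endpoints.
\item $[m\theta\bar\Phi]_0^L$ vanishes because $\theta\in H_0^1$.
\item $[q\bar\theta]_0^L$ vanishes because $\theta\in H_0^1$.
\end{itemize}
Beyond this, one only needs to note that the mean-zero constraints in $H_*^1$ and $L_*^2$ play no role in this computation.
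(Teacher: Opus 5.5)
Your proposal is correct and follows the same route as the paper. Both insert the components of $\mathcal{A}_1U$ into the $\mathcal{H}_1$ inner product, integrate by parts using the boundary conditions encoded in $\mathcal{D}(\mathcal{A}_1)$, and observe that every coupling term is purely imaginary, leaving $\operatorname{Re}(\mathcal{A}_1U,U)_{\mathcal{H}_1}=-\gamma\|v\|^2-\sigma\|q\|^2\le 0$; your explicit boundary bookkeeping just spells out the paper's one-line ``direct calculation''.
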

\begin{proof}
	For any $U \in \mathcal{D}(\mathcal{A}_1)$, by direct calculation and simplification via integration by parts, we obtain
	\begin{align*}
		{\rm Re}(\mathcal{A}_1U,U)_{\mathcal{H}_1} &= {\rm Re}\bigg\{
		\rho\left(\rho^{-1} \left[ \alpha u_{xx} + \beta (\varphi - u) - \gamma v \right], v\right) + \alpha\left(v_x, u_x\right)  + \rho_1\left(\rho_1^{-1} \left[ \kappa (\varphi_x + \psi)_x - \beta (\varphi - u) - m \theta_x \right], \Phi\right) \\
		&\quad + \rho_2\left(\rho_2^{-1} \left[ b \psi_{xx} - \kappa (\varphi_x + \psi) + m\theta \right], \Psi\right) + b\left(\Psi_x,\psi_x\right) + \beta\left(\Phi - v, \varphi - u\right)  \\
		&\quad + \kappa\left(\Phi_x + \Psi, \varphi_x + \psi\right)  + \rho_3\left(\rho_3^{-1} \left[ -q_x - m (\Phi_x + \Psi) \right], \theta\right) + \tau\left(\tau^{-1}(-\sigma q - \theta_x),q\right) \bigg\} \\
		&= -\gamma\|v\|^2  - \sigma \|q\|^2\\
&\le 0.
		\end{align*}
	Thus, $\mathcal{A}_1$ satisfies the dissipativity condition.
\end{proof}
\begin{lemma}\label{maximal-1}
	$0 \in \varrho(\mathcal{A}_1)$ (the resolvent set of $\mathcal{A}_1$). Consequently, $\mathcal{A}_1$ is maximal.
\end{lemma}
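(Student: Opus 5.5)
To prove the statement, I will show that for every $F=(f_1,\dots,f_8)\in\mathcal{H}_1$ the equation $\mathcal{A}_1U=F$ has a unique solution $U\in\mathcal{D}(\mathcal{A}_1)$ with $\|U\|_{\mathcal{H}_1}\le C\|F\|_{\mathcal{H}_1}$. Then $0\in\varrho(\mathcal{A}_1)$. Since $\varrho(\mathcal{A}_1)$ is open, $\lambda I-\mathcal{A}_1$ is also onto for small $\lambda>0$. Together with Lemma \ref{dissipative-1}, this gives maximality, and the Lumer--Phillips theorem then yields Theorem \ref{Well-Posedness-1}.

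\textbf{Step 1 (velocities).} The first, third and fifth components of $\mathcal{A}_1U=F$ give $v=f_1\in H^1_*$, $\Phi=f_3\in H^1_*$ and $\Psi=f_5\in H_0^1$ directly. These satisfy the domain requirements.

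\textbf{Step 2 (thermal block).} The last two equations read
\[
q_x=-\rho_3 f_7-m(f_{3x}+f_5),\qquad \theta_x=-\tau f_8-\sigma q .
\]
Integrating the first gives $q(x)=q(0)+Q(x)$ with $Q\in H^1$ explicit. Then $\theta(x)=-\int_0^x(\tau f_8+\sigma q)\,ds$ satisfies $\theta(0)=0$ automatically. The condition $\theta(L)=0$ is one linear equation for the constant $q(0)$, with coefficient $\sigma L\neq0$, so $q(0)$ is uniquely determined. This yields $q\in H^1$ and $\theta\in H_0^1$, with norms bounded by $C\|F\|_{\mathcal{H}_1}$.

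\textbf{Step 3 (mechanical block).} With $\theta$ now known, $(u,\varphi,\psi)$ must solve
\begin{align*}
&\alpha u_{xx}+\beta(\varphi-u)=h_1:=\rho f_2+\gamma f_1,\\
&\kappa(\varphi_x+\psi)_x-\beta(\varphi-u)=h_2:=\rho_1 f_4+m\theta_x,\\
&b\psi_{xx}-\kappa(\varphi_x+\psi)=h_3:=\rho_2 f_6-m\theta,
\end{align*}
with Neumann conditions on $u,\varphi$ and Dirichlet conditions on $\psi$. I will solve this system variationally on $V:=H^1_*\times H^1_*\times H^1_0$ using the bilinear form
\[
a\big((u,\varphi,\psi),(\tilde u,\tilde\varphi,\tilde\psi)\big)=\alpha(u_x,\tilde u_x)+\beta(\varphi-u,\tilde\varphi-\tilde u)+\kappa(\varphi_x+\psi,\tilde\varphi_x+\tilde\psi)+b(\psi_x,\tilde\psi_x),
\]
together with the corresponding bounded linear functional built from $-h_1,-h_2,-h_3$. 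Continuity of $a$ is clear. For coercivity, note that $\|\varphi_x\|\le\|\varphi_x+\psi\|+\|\psi\|\le\|\varphi_x+\psi\|+\sqrt{c_p}\|\psi_x\|$. Hence $a$ controls $\|u_x\|^2+\|\varphi_x\|^2+\|\psi_x\|^2$, which by Poincar\'e's inequality on $H^1_*$ and $H^1_0$ is equivalent to the $V$-norm. Lax--Milgram then gives a unique solution with $\|(u,\varphi,\psi)\|_V\le C\|F\|_{\mathcal{H}_1}$.

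\textbf{Main obstacle: mean-zero compatibility and recovery of the strong equations.} The test functions $\tilde u,\tilde\varphi$ lie only in $H^1_*$, so I must show the weak identities also hold for constant test functions before integrating by parts to recover the equations and the Neumann conditions.
\begin{itemize}
\item For $\tilde u\equiv1$, the identity reduces to $\int_0^L h_1\,dx=0$. This holds because $f_1,f_2\in L^2_*$.
\item For $\tilde\varphi\equiv1$, the identity reduces to $\int_0^L h_2\,dx=0$. This holds because $f_4\in L^2_*$ and $\int_0^L\theta_x\,dx=0$, since $\theta\in H^1_0$.
\item The two $\beta$-terms cancel against each other when both constant tests are added, consistent with the above.
\end{itemize}
Once the weak form holds for all of $H^1\times H^1\times H^1_0$, standard arguments give $u_{xx},(\varphi_x+\psi)_x,\psi_{xx}\in L^2$. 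They also give $u_x(0)=u_x(L)=0$ and $\varphi_x+\psi=0$ at the endpoints, hence $\varphi_x\in H^1_0$ because $\psi\in H^1_0$. In particular $u_x,\varphi_x\in H^1_0$ and $\psi\in H^2$, so $U\in\mathcal{D}(\mathcal{A}_1)$.

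\textbf{Conclusion.} Collecting the bounds from Steps 1--3 gives $\|U\|_{\mathcal{H}_1}\le C\|F\|_{\mathcal{H}_1}$. Uniqueness follows because for $F=0$ each step forces the zero solution, so $\mathcal{A}_1^{-1}$ exists and is bounded, i.e. $0\in\varrho(\mathcal{A}_1)$.
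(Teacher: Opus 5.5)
Your proof is correct, but it is organized differently from the paper's.

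\textbf{The paper's route.} It eliminates $q$ to obtain $-\theta_{xx}=g_4\in H^{-1}$. It then solves for $(u,\varphi,\psi,\theta)$ all at once by Lax--Milgram on $H^1_*\times H^1_*\times H^1_0\times H^1_0$. The cross term $-m(\theta,\tilde\varphi_x+\tilde\psi)$ makes that form nonsymmetric, so the $\theta$-equation is weighted by $m^2c_p/\kappa$ and a Young--Poincar\'e absorption is needed for coercivity. The paper then recovers $u_x(0)=u_x(L)=0$ (and likewise for $\varphi_x$) by testing with mean-zero functions $\tilde u_m$ concentrated near the endpoints.

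\textbf{Your route.} You exploit the triangular structure instead. The pair $(q,\theta)$ is determined by $F$ alone, by explicit integration, with the single constant $q(0)$ fixed by $\theta(L)=0$. What remains is a symmetric mechanical form that is coercive with no absorption step. You get the natural boundary conditions by showing that the weak identities also hold for constant test functions, and hence on all of $H^1\times H^1\times H^1_0$.

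This compatibility check is exactly what the paper uses without saying so. It passes from ``$(\alpha u_{xx}+\beta(\varphi-u)+g_1,\tilde u)=0$ for all $\tilde u\in H^1_*$'' to ``$\alpha u_{xx}+\beta(\varphi-u)+g_1=0$''. Orthogonality to $H^1_*$ alone only makes the residual constant, and the vanishing of its mean is what kills the constant. So your argument is more elementary and makes that step explicit. The paper's one-shot formulation is more uniform, though both adapt to the memory case ($\delta=1$). In your scheme the thermal block is unchanged there, and only an $H^{-1}$ source $\int_0^\infty g(s)\eta_{xx}(s)\,ds$ enters the $\psi$-functional.

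\textbf{A small clarification.} The $\beta$-terms do not merely cancel when both constant tests are added. Each vanishes on its own, because $u,\varphi\in H^1_*$ gives $\int_0^L(\varphi-u)\,dx=0$. This matters: you need the separate identities for $\tilde u\equiv1$ and $\tilde\varphi\equiv1$ to obtain independent Neumann conditions for $u$ and $\varphi$. Your first two bullet points already supply them once this mean-zero fact is stated.
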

\begin{proof}
	First, we will prove for any $F = (f_1,f_2,f_3,f_4,f_5,f_6,f_7,f_8) \in \mathcal{H}_1$, there exists a unique $$U = (u,v,\varphi,\Phi,\psi,\Psi,\theta,q)\in \mathcal{D}(\mathcal{A}_1)$$ such that \begin{align*}
		\mathcal{A}_1U = F.
	\end{align*}
	From the definition of $\mathcal{A}_1$ \eqref{definition:A-1}, the expression can be reformulated as the following equations
	\begin{equation}\label{eq:resolvent-system-A-1}
		\begin{cases}
			v = f_1, \\
			\alpha u_{xx} + \beta (\varphi - u) - \gamma v = \rho f_2, \\
			\Phi = f_3, \\
			\kappa (\varphi_x + \psi)_x - \beta (\varphi - u) - m \theta_x = \rho_1 f_4, \\
			\Psi = f_5, \\
			b \psi_{xx} - \kappa (\varphi_x + \psi) + m\theta = \rho_2 f_6, \\
			-q_x - m (\Phi_x + \Psi) = \rho_3 f_7, \\
			-\sigma q - \theta_x = \tau f_8.
		\end{cases}
	\end{equation}
	From equations \eqref{eq:resolvent-system-A-1}$_{1,3,5}$, it follows that
	\begin{equation}\label{df1}
		v = f_1 \in H_*^1, \quad \Phi = f_3 \in H_*^1, \quad \Psi = f_5 \in H_0^1.
	\end{equation}
	Equation \eqref{eq:resolvent-system-A-1}$_8$ yields
	$q_x=-\frac{1}{\sigma}(\theta_{xx}+\tau f_{8x}).$ By substituting the above equation into equation \eqref{eq:resolvent-system-A-1}$_7$, we obtain
	\begin{align*}
		- \theta_{xx} = - \sigma m(\Phi_x + \Psi) - \sigma \rho_3 f_7 + \tau f_{8x}.
	\end{align*}
	Then combined with equations \eqref{eq:resolvent-system-A-1}$_{2,4,6}$, we derive
	\begin{align}\label{eq:resolvent-system-1-A-1}
		\begin{cases}
			-\alpha u_{xx} - \beta(\varphi - u) = -\rho f_2 - \gamma v=-\rho f_2 - \gamma f_1 := g_1 \in L_*^2, \\
			-\kappa(\varphi_x + \psi)_x + \beta(\varphi - u) + m\theta_x = -\rho_1 f_4 := g_2 \in L_*^2, \\
			- b \psi_{xx} + \kappa(\varphi_x + \psi) - m\theta = -\rho_2 f_6  := g_3 \in L^2, \\
			- \theta_{xx} = - \sigma m(\Phi_x + \Psi) - \sigma \rho_3 f_7 + \tau f_{8x}=- \sigma m(f_{3x} + f_5) - \sigma \rho_3 f_7 + \tau f_{8x}:=g_4 \in H^{-1}.
		\end{cases}
	\end{align}
	The weak formulation of equations \eqref{eq:resolvent-system-1-A-1} is defined as follows
	\begin{align}\label{eq:resolvent-system-2-A-1}
		\begin{cases}
			\alpha(u_x, \tilde{u}_x) - \beta(\varphi - u, \tilde{u}) = (g_1, \tilde{u}), \\
			\kappa(\varphi_x + \psi, \dt \varphi_x) + \beta(\varphi - u, \dt \varphi) - m(\theta, \dt \varphi_x) = (g_2, \dt \varphi), \\
			b(\psi_x, \dt \psi_x) + \kappa(\varphi_x + \psi, \dt \psi) - m(\theta, \dt \psi) = (g_3, \dt \psi), \\
			\left(\theta_x, \frac{m^2 c_p}{\kappa } \dt \theta_x\right) = \left(g_4, \frac{m^2 c_p}{\kappa } \dt \theta\right),
		\end{cases}
	\end{align}
	for any $(\tilde{u}, \tilde{\varphi}, \tilde{\psi}, \tilde{\theta}) \in V := H_*^1 \times H_*^1 \times H_0^1 \times H_0^1$. Summing each equation in \eqref{eq:resolvent-system-2-A-1} yields the bilinear form $\mathcal{L}: V \times V \to \mathbb{R}$ and the linear form $\mathcal{F}:V \to \mathbb{R}$, satisfying
	\begin{equation}\label{LF}
		\mathcal{L}\left(
		\begin{pmatrix}u\\ \varphi \\ \psi \\ \theta\end{pmatrix},
		\begin{pmatrix}\tilde{u}\\ \tilde{\varphi} \\ \tilde{\psi} \\ \tilde{\theta}\end{pmatrix}
		\right) = \mathcal{F}\left(
		\begin{pmatrix}\tilde{u}\\ \tilde{\varphi} \\ \tilde{\psi} \\ \tilde{\theta}\end{pmatrix}
		\right).
	\end{equation}
	where $\mathcal{L}$ and $\mathcal{F}$ are defined as
	\begin{equation}\label{L}
		\begin{split}
			\mathcal{L}\left(
			\begin{pmatrix}u\\ \varphi \\ \psi \\ \theta\end{pmatrix},
			\begin{pmatrix}\tilde{u}\\ \tilde{\varphi} \\ \tilde{\psi} \\ \tilde{\theta}\end{pmatrix}
			\right) &= \alpha (u_x, \tilde{u}_x) + \beta (\varphi - u, \tilde{\varphi} - \tilde{u}) + \kappa (\varphi_x + \psi, \tilde{\varphi_x} + \tilde{\psi}) \\
			&\quad - m(\theta, \tilde{\varphi_x}+ \tilde{\psi}) + b (\psi_x, \tilde{\psi_x})   + \frac{m^2 c_p}{\kappa} (\theta_x, \tilde{\theta_x}),
		\end{split}
	\end{equation}
	\begin{align}\label{F}
		\mathcal{F}\left(
		\begin{pmatrix}\tilde{u}\\ \tilde{\varphi} \\ \tilde{\psi} \\ \tilde{\theta}\end{pmatrix}
		\right) &= \left(g_1, \tilde{u}\right) + \left(g_2, \tilde{\varphi}\right) + \left(g_3, \tilde{\psi}\right) + \left(g_4, \frac{m^2 c_p}{\kappa }\tilde{\theta}\right) \notag \\
		&=  - \gamma (f_1, \tilde{u}) - \rho (f_2, \tilde{u}) - \rho_1 (f_4, \tilde{\varphi}) - \rho_2 (f_6, \tilde{\psi})  \notag \\
		&\quad  - \frac{\sigma m^3 c_p }{\kappa } (f_{3x} + f_5, \tilde{\theta}) - \frac{\sigma \rho_3 m^2 c_p }{\kappa } (f_7, \tilde{\theta}) - \frac{\tau m^2 c_p}{\kappa}(f_8, \tilde{\theta_x}).
	\end{align}
	Clearly, $\mathcal{L}$ and $\mathcal{F}$ are bounded. We now prove that $\mathcal{L}$ is coercive. In fact, for any $(u, \varphi, \psi, \theta) \in V$, by Young's inequality and Poincar\'e's inequality, we get
	\begin{align*}\label{L-1}
		\mathcal{L}\left(
		\begin{pmatrix}u\\ \varphi \\ \psi \\ \theta\end{pmatrix},
		\begin{pmatrix}u\\ \varphi \\ \psi \\ \theta\end{pmatrix}
		\right) &= \alpha \| u_x \|^2 + \beta \| \varphi - u \|^2 + \kappa \| \varphi_x + \psi \|^2 + b \| \psi_x \|^2 + \frac{m^2 c_p}{\kappa} \| \theta_x \|^2 - m (\theta, \varphi_x + \psi)\\
		&\geq \alpha \| u_x \|^2 + \beta \| \varphi - u \|^2 + \kappa \| \varphi_x + \psi \|^2 + b \| \psi_x \|^2 + \frac{m^2 c_p}{\kappa} \| \theta_x \|^2 - |m|\|\theta\|\|\varphi_x+\psi\|\\
		&\geq \alpha \| u_x \|^2 + \beta \| \varphi - u \|^2 + \kappa \| \varphi_x + \psi \|^2 + b \| \psi_x \|^2 + \frac{m^2 c_p}{\kappa} \| \theta_x \|^2 - \frac{\kappa}{2}\|\varphi_x+\psi\|^2 - \frac{m^2}{2\kappa}\|\theta\|^2\\
		&\geq \alpha \| u_x \|^2 + \beta \| \varphi - u \|^2 + \kappa \| \varphi_x + \psi \|^2 + b \| \psi_x \|^2 + \frac{m^2 c_p}{\kappa} \| \theta_x \|^2  - \frac{\kappa}{2}\|\varphi_x+\psi\|^2 - \frac{m^2 c_p}{2\kappa}\|\theta_x\|^2\\	
		&= \alpha \| u_x \|^2 + \beta \| \varphi - u \|^2 + \frac{\kappa}{2} \| \varphi_x + \psi \|^2 + b \| \psi_x \|^2 + \frac{m^2 c_p}{2\kappa} \| \theta_x \|^2 \\
		&\geq c \left\|
		\begin{pmatrix}u\\ \varphi \\ \psi \\ \theta\end{pmatrix}
		\right\|_V^2, 	
	\end{align*}
	where $c$ is a positive constant. Therefore, $\mathcal{L}$ is coercive. By the Lax-Milgram Theorem, system \eqref{eq:resolvent-system-1-A-1} has a unique solution $(u, \varphi, \psi, \theta) \in V$.
	
From equation \eqref{eq:resolvent-system-A-1}$_8$, it follows that
	\begin{align}
		q=-\frac{1}{\sigma}(\theta_x+\tau f_8) \in L^2.
	\end{align}
	From \eqref{eq:resolvent-system-A-1}$_7$, we also obtain $q_x=-m(\Phi_x+\Psi)-\rho_3 f_7 \in L^2$. Thus $q \in H^1$.
	
	Using standard elliptic regularity theory for \eqref{eq:resolvent-system-1-A-1}$_{1,2,3}$, we derive
	\begin{align}\label{jlll}
		u, \varphi, \psi \in H^2.
	\end{align}
	
	Setting $\dt \varphi = \dt \psi = \dt \theta = 0$ in \eqref{LF} yields
	\begin{align*}
		\alpha(u_x,\dt u_x) = (\beta(\varphi - u) + g_1, \dt u), \quad \forall \dt u \in H_*^1.
	\end{align*}
	Since $u\in H^2$, we can integrate by parts and rearrange the terms to obtain
	\begin{equation}\label{vb}
		(\alpha u_{xx}+\beta(\varphi - u) + g_1,\dt u)=\alpha \left(u_x(L)\dt u(L)-u_x(0)\dt u(0)\right), \quad \forall \dt u \in H_*^1.
	\end{equation}
	
	For $m$ an integer with $m\gg1$, we choose a nonnegative smooth function $\xi_m(x)$ defined on $\left[0,\frac{1}{m}\right]$ such that
	\[
	\xi_m(0)=1, \ 0\le\xi_m(x)\le 1 \text{ for } x\in\left(0,\frac{1}{m}\right), \ \xi_m\left(\frac{1}{m}\right)=0.
	\]
	Define
	\[
	\tilde{u}_m(x)=
	\begin{cases}
		\xi_m(x), & 0\le x\le\frac{1}{m},\\
		0, & \frac{1}{m}\le x\le L-\frac{1}{m},\\
		-\xi_m(L-x), & L-\frac{1}{m}\le x\le L.
	\end{cases}
	\]
	A series of straightforward calculations shows that $\tilde{u}_m(x)\in H_*^1$ and $\|\tilde{u}_m\|^2 \leq \frac{2}{m}$. Then by \eqref{vb}, we have
	\begin{equation}\label{vb1}
		(\alpha u_{xx}+\beta(\varphi - u) + g_1,\tilde{u}_m)=\alpha \left(-u_x(L)-u_x(0)\right).
	\end{equation}
	By Cauchy-Schwarz's inequality, we derive
	\[
	\left| (\alpha u_{xx}+\beta(\varphi - u) + g_1,\tilde{u}_m)\right|\le\left(\frac{2}{m}\right)^{\frac{1}{2}}\|\alpha u_{xx}+\beta(\varphi - u) + g_1\|\to0
	\]
	as $m\to\infty$.
	 Letting $m\to\infty$ in \eqref{vb1} gives
	\begin{equation}\label{vb2}
		u_x(L)+u_x(0)=0.
	\end{equation}
	Let
	\[
	\tilde{u}_m(x)=
	\begin{cases}
		\xi_m(x), & 0\le x\le \frac{1}{m},\\
		0, & \frac{1}{m}\le x\le\frac{L}{2}-\frac{1}{m},\\
		-\xi_m(\frac{L}{2}-x), &
		\frac{L}{2}-\frac{1}{m}\le x\le\frac{L}{2},\\
		-\xi_m(-\frac{L}{2}+x), &
		\frac{L}{2}\le x\le\frac{L}{2}+\frac{1}{m},\\
		0, &
		\frac{L}{2}+\frac{1}{m}\le x\le L-\frac{1}{m},\\
		\xi_m(L-x), & L-\frac{1}{m}\le x\le L.
	\end{cases}
	\]
	A series of straightforward calculations shows that $\tilde{u}_m(x)\in H_*^1$ and $\|\tilde{u}_m\|^2 \le\frac{4}{m}$. Then by \eqref{vb}, we have
	\begin{equation}\label{gvb1}
		(\alpha u_{xx}+\beta(\varphi - u) + g_1,\tilde{u}_m)=\alpha \left(u_x(L)-u_x(0)\right).
	\end{equation}
	By Cauchy-Schwarz's inequality, we derive
	\[
	\left| (\alpha u_{xx}+\beta(\varphi - u) + g_1,\tilde{u}_m)\right|\le\frac{2}{m^{\frac{1}{2}}}\|\alpha u_{xx}+\beta(\varphi - u) + g_1\|\to0
	\]
	as $m\to\infty$.
    Letting $m\to\infty$ in \eqref{gvb1} gives
	\begin{equation}\label{gvb2}
		u_x(L)-u_x(0)=0.
	\end{equation}
	
	From \eqref{vb2} and \eqref{gvb2}, it follows that
	\[
	u_x(L)=u_x(0)=0.
	\]
	Substituting back into \eqref{vb}, we obtain
	\[
	(\alpha u_{xx}+\beta(\varphi - u) + g_1,\tilde{u})=0, \quad \forall \tilde{u}\in H_*^1,
	\]
	which implies
	\[
	\alpha u_{xx}+\beta(\varphi - u) + g_1=0.
	\]
	Thus, we have
	\begin{equation}\label{jl6}
		u\in H^2\text{ with } u_x\in H_0^1 \text{ satisfies } \alpha u_{xx}+\beta(\varphi - u) + g_1=0.
	\end{equation}
	
	By similar reasoning,
	\begin{equation}\label{jl7}
		\varphi\in H^2\text{ with } \varphi_x\in H_0^1\text{ satisfies } -\kappa(\varphi_x + \psi)_x + \beta(\varphi - u) + m\theta_x = g_2.
	\end{equation}
	Consequently, $U \in \mathcal{D}(\mathcal{A}_1)$ is obtained. Clearly, the above proof shows that $\|U\|_{\mathcal{H}_1}=\|\mathcal{A}_1^{-1}F\|_{\mathcal{H}_1} \leq C\|F\|_{\mathcal{H}_1}$ for some positive constant $C$. Thus, $0 \in \varrho(\mathcal{A}_1)$.
\end{proof}

\subsection{Polynomial Decay}\label{sec2.2}
In this section, we establish the polynomial stability of the \(C_0\)-semigroup generated by operator \(\mathcal{A}_1\) (see Section 2 for its definition and domain \(\mathcal{D}(\mathcal{A}_1) \subset \mathcal{H}_1\)), which corresponds to Theorem \ref{Polynomial stability}. Our analysis is conducted by distinguishing two critical scenarios based on the wave speeds associated with the bridge deck: the case of equal wave speeds and the case of distinct wave speeds.

The proof of Theorem \ref{Polynomial stability} relies on the following classical stability criterion for \(C_0\)-semigroups on Hilbert spaces, which characterizes exponential and polynomial stability in terms of the resolvent estimate of the generator
\begin{theorem}\label{Stability Theorem}\cite{MR1681343,MR2606945}
	Let \(X\) be a Hilbert space, and let \(\{e^{\mathcal{A} t}\}_{t \geq 0}\) be a contraction \(C_0\)-semigroup on \(X\) with generator \(\mathcal{A}: \mathcal{D}(\mathcal{A}) \subset X \to X\). Suppose that the imaginary axis is contained in the resolvent set of \(\mathcal{A}\), i.e., \(i\mathbb{R} \subset \rho(\mathcal{A})\). Then the following assertions hold
	\begin{enumerate}[(i)]
		\item The semigroup \(\{e^{\mathcal{A} t}\}_{t \geq 0}\) is exponentially stable, i.e., $\|e^{\mathcal{A} t}\|_{\mathcal{L}(X \to X)} \le Me^{-\omega t}$ for some constant $M,\omega>0$, if and only if
        \[
        \limsup_{|\lambda| \to \infty} \|(i\lambda I - \mathcal{A})^{-1}\|_{\mathcal{L}(X \to X)} < \infty;
        \]
		\item  The semigroup \(\{e^{\mathcal{A} t}\}_{t \geq 0}\) is polynomially stable of order \(\frac{1}{l}\) (where \(l > 0\)), i.e., $\|e^{\mathcal{A} t}\phi\|_X\le Mt^{-\frac1l}\|\phi\|_{\mathcal{D}(\mathcal{A})}$ for $\phi\in \mathcal{D}(\mathcal{A})$ and some constant $M>0$, if and only if
        \[
        \limsup_{|\lambda| \to \infty} |\lambda|^{-l} \|(i\lambda I - \mathcal{A})^{-1}\|_{\mathcal{L}(X \to X)} < \infty.
        \]
	\end{enumerate}
\end{theorem}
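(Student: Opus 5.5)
The statement is the Gearhart--Pr\"uss--Huang theorem (part (i)) together with the Borichev--Tomilov characterization (part (ii)). I would prove the two parts separately, using Plancherel's theorem on the Hilbert space $X$ as the common tool. Throughout, $\|e^{\mathcal{A}t}\|\le 1$ and $i\mathbb{R}\subset\rho(\mathcal{A})$. Since the resolvent is continuous on $\rho(\mathcal{A})$, $\|(i\lambda-\mathcal{A})^{-1}\|$ is bounded on compact sets of $\lambda$. So the $\limsup$ conditions are equivalent to global bounds $\sup_{\lambda\in\mathbb{R}}\|(i\lambda-\mathcal{A})^{-1}\|<\infty$, respectively $\|(i\lambda-\mathcal{A})^{-1}\|\le C(1+|\lambda|)^l$.

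\textbf{Part (i).} For necessity, if $\|e^{\mathcal{A}t}\|\le Me^{-\omega t}$, the Laplace representation $(i\lambda-\mathcal{A})^{-1}x=\int_0^\infty e^{-i\lambda t}e^{\mathcal{A}t}x\,dt$ gives the bound $M/\omega$ at once. For sufficiency, I would first extend the resolvent bound to a strip $\{\mathrm{Re}\,z\ge 0\}$ using the Neumann series around $i\lambda$ and the Hille--Yosida bound $\|(z-\mathcal{A})^{-1}\|\le 1/\mathrm{Re}\,z$. Then, for $x\in X$ and $\varepsilon>0$, the function $t\mapsto e^{-\varepsilon t}e^{\mathcal{A}t}x$ is in $L^2(\mathbb{R}^+;X)$ with Fourier transform $(\varepsilon+i\lambda-\mathcal{A})^{-1}x$. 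Plancherel (valid because $X$ is Hilbert) together with the resolvent identity
\[
R(\varepsilon+i\lambda)-R(i\lambda)=-\varepsilon R(i\lambda)R(\varepsilon+i\lambda), \qquad R(z):=(z-\mathcal{A})^{-1},
\]
shows $\int_0^\infty\|e^{\mathcal{A}t}x\|^2dt\le C\|x\|^2$, uniformly as $\varepsilon\to0$. Finally, Datko's lemma (equivalently, the estimate $t\|e^{\mathcal{A}t}x\|^2\le\int_0^t\|e^{\mathcal{A}(t-s)}\|^2\|e^{\mathcal{A}s}x\|^2ds$) yields $\|e^{\mathcal{A}t}\|<1$ for some $t$, and the semigroup property then gives exponential decay.

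\textbf{Part (ii).} For the direction ``decay $\Rightarrow$ resolvent bound'', I would write $R(i\lambda)\mathcal{A}^{-1}x=\int_0^\tau e^{-i\lambda t}e^{\mathcal{A}t}\mathcal{A}^{-1}x\,dt+e^{-i\lambda\tau}R(i\lambda)e^{\mathcal{A}\tau}\mathcal{A}^{-1}x$. The decay hypothesis controls the second term, and $R(i\lambda)=\bigl(i\lambda R(i\lambda)-I\bigr)\mathcal{A}^{-1}$ relates $R(i\lambda)$ to $R(i\lambda)\mathcal{A}^{-1}$. Choosing $\tau\sim|\lambda|^{l}$ and absorbing gives $\|R(i\lambda)\|\lesssim|\lambda|^l$. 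For the converse, the resolvent bound implies $\|R(i\lambda)\mathcal{A}^{-1}\|\lesssim(1+|\lambda|)^{l-1}$, and more generally $\|R(i\lambda)(1-\mathcal{A})^{-l}\|$ is bounded uniformly in $\lambda$ after fractional powers are set up by moment inequalities. Applying the Plancherel argument of part (i) to the operator family $e^{\mathcal{A}t}(1-\mathcal{A})^{-l}$ gives $\|e^{\mathcal{A}t}(1-\mathcal{A})^{-l}\|=O(t^{-1})$. Interpolating between $\mathcal{D}(\mathcal{A})$ and $\mathcal{D}(\mathcal{A}^{l})$ then gives $\|e^{\mathcal{A}t}\mathcal{A}^{-1}\|=O(t^{-1/l})$. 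Since $\|\phi\|_{\mathcal{D}(\mathcal{A})}\simeq\|\mathcal{A}\phi\|$, this is exactly the stated rate.

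\textbf{Main obstacle.} The hard step is the sufficiency direction of (ii): obtaining the sharp rate $t^{-1/l}$ with no logarithmic loss. The Batty--Duyckaerts contour-integral method only gives $t^{-1/l}\log^{c} t$, and removing the logarithm genuinely needs the Hilbert-space Plancherel argument of Borichev--Tomilov. Because this is precisely the content of \cite{MR2606945} and \cite{MR1681343}, in the paper I would invoke those references for these steps rather than reproduce the full argument.
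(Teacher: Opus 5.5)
The paper gives no proof of this statement; it is quoted from the two cited references. Your decision to defer to them is therefore exactly what the paper does. Your outline of (i) (Neumann series off the axis, Plancherel combined with the resolvent identity, then Datko) is the standard Gearhart--Pr\"uss--Huang argument. Two steps of your sketch of (ii), however, would not go through as written.

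\textbf{Decay $\Rightarrow$ resolvent bound.} Applying the identity to $\mathcal{A}^{-1}x$ and using the decay only on the tail loses a power of $|\lambda|$. The integral $\int_0^\tau e^{-i\lambda t}e^{\mathcal{A}t}\mathcal{A}^{-1}x\,dt$ is only $O(\tau)$. After multiplying by $|\lambda|$ via $R(i\lambda)=(i\lambda R(i\lambda)-I)\mathcal{A}^{-1}$, you get $\|R(i\lambda)\|\lesssim|\lambda|^{l+1}$ when $\tau\sim|\lambda|^l$. Using the decay inside the integral as well rescues only the case $l>1$.

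The fix is to apply the identity to $x$ itself and put $\mathcal{A}^{-1}$ on the tail:
\[
R(i\lambda)x=\int_0^\tau e^{-i\lambda t}e^{\mathcal{A}t}x\,dt+e^{-i\lambda\tau}(i\lambda R(i\lambda)-I)e^{\mathcal{A}\tau}\mathcal{A}^{-1}x .
\]
With $\|e^{\mathcal{A}t}\mathcal{A}^{-1}\|\le Mt^{-1/l}$ this gives $\|R(i\lambda)\|\le\tau+(|\lambda|\,\|R(i\lambda)\|+1)M\tau^{-1/l}$. Choosing $\tau=(2M|\lambda|)^l$ then yields $\|R(i\lambda)\|\lesssim|\lambda|^l$ for every $l>0$.

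\textbf{Resolvent bound $\Rightarrow$ decay.} The Plancherel argument of part (i) does not give $\|e^{\mathcal{A}t}B\|=O(t^{-1})$ for $B=(1-\mathcal{A})^{-l}$. Applied to $e^{\mathcal{A}t}B$, it yields $\int_0^\infty\|e^{\mathcal{A}t}Bx\|^2dt\lesssim\|x\|^2$. The Datko-type inequality then gives only $O(t^{-1/2})$, and after your interpolation step this becomes the rate $t^{-1/(2l)}$, half the claimed exponent.

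The mechanism you need, which is the heart of Borichev--Tomilov, is a higher-order Laplace inversion on the line $\mathrm{Re}\,z=\varepsilon$:
\[
t^{2}\langle e^{\mathcal{A}t}Bx,y\rangle=\frac{e^{\varepsilon t}}{\pi}\int_{\mathbb{R}}e^{ist}\bigl\langle R(z)B\,R(z)x,\;R(z)^{*}y\bigr\rangle\,ds,\qquad z=\varepsilon+is .
\]
Three ingredients then combine:
\begin{enumerate}[(a)]
\item Cauchy--Schwarz in $s$;
\item the bound $\sup_{\mathrm{Re}\,z>0}\|R(z)B\|<\infty$, which must first be propagated off $i\mathbb{R}$ (a step your sketch omits);
\item Plancherel for the semigroup and its adjoint, $\int_{\mathbb{R}}\|R(\varepsilon+is)x\|^{2}ds\le\pi\|x\|^{2}/\varepsilon$.
\end{enumerate}
Together they give $t^{2}\|e^{\mathcal{A}t}B\|\le Ce^{\varepsilon t}/\varepsilon$, and the choice $\varepsilon=1/t$ produces the $O(t^{-1})$ rate. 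Your subsequent moment-inequality step to $\|e^{\mathcal{A}t}\mathcal{A}^{-1}\|=O(t^{-1/l})$ is fine.
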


  For any $F = (f_1,f_2,f_3,f_4,f_5,f_6,f_7,f_8) \in \mathcal{H}_1$, consider the resolvent equation $(i\lambda I-\mathcal{A}_1)U=F$, where $U=(u,v,\varphi,\Phi,\psi,\Psi,\theta,q) \in \mathcal{D}(\mathcal{A}_1)$. From the definition of $\mathcal{A}_1$ \eqref{definition:A-1}, it follows that
  \begin{align}\label{Resolvent equations}
  	\begin{cases}
  		i\lambda u - v=f_1   \quad &\text{in } H_*^1,\\
  		i\lambda \rho v -\alpha u_{xx}-\beta(\varphi - u)+ \gamma v =\rho f_2 \quad &\text{in } L_*^2,\\
  		i\lambda\varphi - \Phi =f_3\quad &\text{in } H_*^1,\\
  		i\lambda\rho_1 \Phi - \kappa(\varphi_{x}+\psi)_x + \beta (\varphi - u) +  m\theta_{x}=\rho_1f_4  \quad &\text{in } L_*^2,\\
  		i\lambda \psi - \Psi =f_5 \quad &\text{in } H_0^1,\\
  		i\lambda \rho_2 \Psi - b \psi_{xx} + \kappa(\varphi_{x} + \psi) - m\theta =\rho_2f_6 \quad &\text{in } L^2,\\
  		i\lambda \rho_3 \theta + q_x + m(\Phi_{x} + \Psi) =\rho_3f_7 \quad &\text{in } L^2,\\
  		i\lambda \tau q + \sigma q + \theta_x = \tau f_8 \quad &\text{in } L^2.
  	\end{cases}
  \end{align}

  \begin{lemma}\label{lemma-iR}
  	The resolvent set of the operator $\mathcal{A}_1$ contains the imaginary axis, i.e., $i\mathbb{R} \subset \rho(\mathcal{A}_1)$.
  \end{lemma}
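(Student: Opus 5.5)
Since $0\in\varrho(\mathcal{A}_1)$ by Lemma \ref{maximal-1} and the resolvent set is open, I would argue by contradiction. Suppose $i\mathbb{R}\not\subset\varrho(\mathcal{A}_1)$. Then there is $\omega\in\mathbb{R}$ with $\omega\neq 0$ and $\{i\lambda:|\lambda|<|\omega|\}\subset\varrho(\mathcal{A}_1)$ and $i\omega\in\sigma(\mathcal{A}_1)$. Choose real $\lambda_n\to\omega$ with $|\lambda_n|<|\omega|$; then $\|(i\lambda_n-\mathcal{A}_1)^{-1}\|\to\infty$. Hence there exist $U_n\in\mathcal{D}(\mathcal{A}_1)$ with $\|U_n\|_{\mathcal{H}_1}=1$ and $F_n:=(i\lambda_n-\mathcal{A}_1)U_n\to0$ in $\mathcal{H}_1$. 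The goal is to show $U_n\to0$, which contradicts the normalization. Alternatively, one can use that $\mathcal{A}_1^{-1}$ is compact, because the domain embeds compactly by Rellich's theorem. Then $\sigma(\mathcal{A}_1)$ consists only of eigenvalues, and it suffices to show that $i\lambda U=\mathcal{A}_1U$ with $\lambda\ne0$ forces $U=0$. I would take this route, since it is cleaner.

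\emph{Step 1 (dissipation).} Taking the real part of the inner product of $(i\lambda-\mathcal{A}_1)U=0$ with $U$ and using Lemma \ref{dissipative-1} gives $\gamma\|v\|^2+\sigma\|q\|^2=0$, so $v=0$ and $q=0$. Then \eqref{Resolvent equations}$_1$ with $\lambda\neq0$ gives $u=0$. Equation \eqref{Resolvent equations}$_8$ gives $\theta_x=0$, and with $\theta(0)=0$ this yields $\theta=0$.

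\emph{Step 2 (propagation).} Equation \eqref{Resolvent equations}$_2$ reduces to $\beta\varphi=0$, so $\varphi=0$ and hence $\Phi=0$. Equation \eqref{Resolvent equations}$_7$ then gives $m\Psi=0$, so $\Psi=0$ because $m\ne0$, and therefore $\psi=0$. Every component vanishes, so $U=0$.

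If one prefers the sequence argument, Steps 1 and 2 become asymptotic. First, $\|v_n\|,\|q_n\|\to0$. Dividing \eqref{Resolvent equations}$_1$ by $\lambda_n$, which stays away from zero, gives $\|u_n\|\to0$, and testing \eqref{Resolvent equations}$_2$ gives $\|u_{nx}\|\to0$. Testing \eqref{Resolvent equations}$_2$ against suitable functions gives $\|\varphi_n\|\to0$. Testing \eqref{Resolvent equations}$_8$ and \eqref{Resolvent equations}$_7$ with $\theta_n$ and $q_n$ gives $\theta_n\to0$. The remaining terms $\Phi_n$, $\Psi_n$, $\psi_{nx}$ and $\varphi_{nx}+\psi_n$ are then controlled by testing \eqref{Resolvent equations}$_{4,6}$ against $\varphi_n$ and $\psi_n$, using the bounds from the domain. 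The only delicate point is the compactness of $\mathcal{A}_1^{-1}$ or, in the sequence version, the bookkeeping with lower-order terms. Since $\lambda_n$ is bounded, all of this is routine, so I expect no real obstacle.
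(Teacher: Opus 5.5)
Your proof is correct and follows the paper's route. Compactness of $\mathcal{A}_1^{-1}$, which comes from the compact embedding of $\mathcal{D}(\mathcal{A}_1)$ into $\mathcal{H}_1$, reduces $\sigma(\mathcal{A}_1)$ to eigenvalues; dissipativity then forces $v=q=0$; and the resolvent equations make the remaining components vanish. You also write out the propagation chain that the paper only calls ``clear'': $u=0$; $\theta=0$ from $\theta_x=0$ and $\theta\in H_0^1$; $\varphi=\Phi=0$ using $\beta>0$; $\Psi=0$ using $m\neq0$; and finally $\psi=0$.
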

  \begin{proof}
  	By contradiction, suppose $i\mathbb{R} \not\subset \varrho(\mathcal{A}_1)$. Since $0 \in \varrho(\mathcal{A}_1)$, there must exists a real number $\lambda \neq 0$, such that $i\lambda \not\in \varrho(\mathcal{A}_1)$, i.e., $i\lambda \in \sigma(\mathcal{A}_1)$.
  	The combination of $\mathcal{A}_1$ being a closed operator and the compact embedding of $\mathcal{D}(\mathcal{A}_1)$into
  	$\mathcal{H}_1$ implies that $\sigma(\mathcal{A}_1)$ contains only eigenvalues. Thus, $i \lambda$ is an eigenvalue and therefore corresponds to a non-zero eigenfunction $U=(u,v,\varphi,\Phi,\psi,\Psi,\theta,q)$. Therefore, $F=(i\lambda I-\mathcal{A}_1)U = 0$. \\
  	Taking the $\mathcal{H}_1$-inner product of $(i\lambda I-\mathcal{A}_1)U$ with $U$, from the proof of Lemma\ref{dissipative-1}, we obtain
  	\begin{align*}
  		0={\rm Re}(F,U)_{\mathcal{H}_1}={\rm Re}((i\lambda I-\mathcal{A}_1)U,U)_{\mathcal{H}_1}=-{\rm Re}(\mathcal{A}_1U,U)_{\mathcal{H}_1}=\gamma\|v\|^2+\sigma\|q\|^2.
  	\end{align*}
  	Thus, $v=q=0$. It is clear from equations \eqref{Resolvent equations} that $u=\varphi=\Phi=\psi=\Psi=\theta=0$. Therefore, $U=0$, contradicting $U \neq 0$. Consequently, $i\mathbb{R} \subset \rho(\mathcal{A}_1)$.
  \end{proof}
  \begin{lemma}\label{lemma-v,q}
  	The following estimates hold
  	\begin{align}
  		\|v\|^2 \leq \frac{1}{\gamma}\|U\|_{\mathcal{H}_1}\|F\|_{\mathcal{H}_1}, \label{estimate-v}\\
  		\|q\|^2 \leq \frac{1}{\sigma}\|U\|_{\mathcal{H}_1}\|F\|_{\mathcal{H}_1}. \label{estimate-q}
  	\end{align}
  \end{lemma}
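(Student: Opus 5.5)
The plan is to take the real part of the $\mathcal{H}_1$-inner product of the resolvent equation $(i\lambda I-\mathcal{A}_1)U=F$ with $U$ and use the dissipation identity computed in Lemma \ref{dissipative-1}.

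Since $\lambda$ is real, ${\rm Re}\,(i\lambda U,U)_{\mathcal{H}_1}={\rm Re}\,(i\lambda\|U\|_{\mathcal{H}_1}^2)=0$. The calculation in the proof of Lemma \ref{dissipative-1} gives ${\rm Re}(\mathcal{A}_1U,U)_{\mathcal{H}_1}=-\gamma\|v\|^2-\sigma\|q\|^2$. That computation holds for every $U\in\mathcal{D}(\mathcal{A}_1)$, and the boundary terms from integrating by parts vanish because of the domain conditions ($u_x,\varphi_x,\theta,\Psi\in H_0^1$). Hence
\begin{align*}
\gamma\|v\|^2+\sigma\|q\|^2={\rm Re}\,((i\lambda I-\mathcal{A}_1)U,U)_{\mathcal{H}_1}={\rm Re}\,(F,U)_{\mathcal{H}_1}.
\end{align*}

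Next, by the Cauchy--Schwarz inequality in $\mathcal{H}_1$, the right-hand side is at most $\|F\|_{\mathcal{H}_1}\|U\|_{\mathcal{H}_1}$. Both terms on the left are nonnegative, so each is bounded by this quantity separately: $\gamma\|v\|^2\le\|U\|_{\mathcal{H}_1}\|F\|_{\mathcal{H}_1}$ and $\sigma\|q\|^2\le\|U\|_{\mathcal{H}_1}\|F\|_{\mathcal{H}_1}$. Dividing by $\gamma>0$ and $\sigma>0$ gives \eqref{estimate-v} and \eqref{estimate-q}.

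No step is a real obstacle. The one point to check is that Lemma \ref{dissipative-1} holds as an identity for all $U\in\mathcal{D}(\mathcal{A}_1)$, not merely as an inequality. In particular, the cross terms involving $m$ (between the $\varphi$, $\psi$ and $\theta$ equations) and the $\beta$ coupling terms must cancel exactly in the real part, and that cancellation was already established in that proof.
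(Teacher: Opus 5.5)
Your proposal is correct and follows essentially the same route as the paper. You take the real part of the $\mathcal{H}_1$-inner product of $(i\lambda I-\mathcal{A}_1)U=F$ with $U$, use the identity ${\rm Re}(\mathcal{A}_1U,U)_{\mathcal{H}_1}=-\gamma\|v\|^2-\sigma\|q\|^2$ from Lemma \ref{dissipative-1}, bound ${\rm Re}(F,U)_{\mathcal{H}_1}$ by Cauchy--Schwarz, and drop the other nonnegative term. One small addition to your list of domain conditions: the boundary term $[\kappa(\varphi_x+\psi)\overline{\Phi}]_0^L$ also needs $\psi\in H_0^1$, which holds for every element of $\mathcal{H}_1$.
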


  \begin{proof}
  	Take the $\mathcal{H}_1$-inner product of $(i\lambda I-\mathcal{A}_1)U=F$ with $U$ and compute the real part
  	\begin{align*}
  		\|F\|_{\mathcal{H}_1}\|U\|_{\mathcal{H}_1}
  		&\geq |\operatorname{Re}(F,U)_{\mathcal{H}_1}| = |\operatorname{Re}(i\lambda U-\mathcal{A}_1U,U)_{\mathcal{H}_1}| \\
  		&= |\operatorname{Re}\{i\lambda(U,U)_{\mathcal{H}_1} - (\mathcal{A}_1U,U)_{\mathcal{H}_1}\}| = |-\operatorname{Re}(\mathcal{A}_1U,U)_{\mathcal{H}_1}|\\
  		&= \gamma \|v\|^2 + \sigma \|q\|^2.
  	\end{align*}
  	Estimates \eqref{estimate-v} and \eqref{estimate-q} can be obtained directly.
  \end{proof}
  \begin{lemma}\label{lemma-ux}
  	There exists a constant $C>0$ such that
  	\begin{align}\label{estimate-ux-1}
  		\|u_x\|^2 \leq C(\|U\|_{\mathcal{H}_1}\|F\|_{\mathcal{H}_1} + \|v\|\|\varphi\|).
  	\end{align}
  	In particular, for any $\epsilon>0$, there exists a constant $C_{\epsilon}>0$ (independent of $\lambda$) such that
  	\begin{align}\label{estimate-ux-2}
  		\|u_x\|^2 \leq \epsilon\|U\|_{\mathcal{H}_1}^2 + C_{\epsilon}\|F\|_{\mathcal{H}_1}^2
  	\end{align}
  	for $|\lambda| \geq 1$.
  \end{lemma}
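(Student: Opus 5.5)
We prove Lemma \ref{lemma-ux} with the standard multiplier argument on the cable equation. First we eliminate $v$ from the second resolvent equation, and then we multiply by $\bar u$.

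\textbf{Step 1: the multiplier identity.} By \eqref{Resolvent equations}$_1$ we have $i\lambda u = v + f_1$. Take the $L^2$ inner product of \eqref{Resolvent equations}$_2$ with $u$ and integrate by parts. The boundary terms vanish because $u_x(0)=u_x(L)=0$. This gives
\begin{align*}
\alpha\|u_x\|^2 + \beta\|u\|^2 = \beta(\varphi,u) - (i\lambda\rho v,u) - \gamma(v,u) + \rho(f_2,u).
\end{align*}
For the inertial term, use $\overline{i\lambda u} = \overline{v+f_1}$, which gives
\begin{align*}
-(i\lambda\rho v,u) = \rho(v,\overline{i\lambda}\,\overline{u}\,\cdot)\text{-type term} = \rho\|v\|^2 + \rho(v,f_1).
\end{align*}
Hence
\begin{align*}
\alpha\|u_x\|^2 + \beta\|u\|^2 = \rho\|v\|^2 + \rho(v,f_1) + \beta(\varphi,u) - \gamma(v,u) + \rho(f_2,u).
\end{align*}
Take real parts.

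\textbf{Step 2: bounding each term.} We treat the terms on the right in turn.
\begin{itemize}
\item $\rho\|v\|^2 \le \frac{\rho}{\gamma}\|U\|_{\mathcal{H}_1}\|F\|_{\mathcal{H}_1}$, by Lemma \ref{lemma-v,q}.
\item $|\rho(v,f_1)| \le C\|U\|_{\mathcal{H}_1}\|F\|_{\mathcal{H}_1}$, using Poincar\'e on $H^1_*$ so that $\|f_1\|\le C\|f_{1x}\|$.
\item $|\rho(f_2,u)| \le C\|F\|_{\mathcal{H}_1}\|U\|_{\mathcal{H}_1}$, using $\|u\|\le \sqrt{c_p}\|u_x\|$.
\item For $\beta(\varphi,u)$, we cannot bound it directly by small quantities. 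Instead write $u = (v+f_1)/(i\lambda)$ for $\lambda\neq0$. Then
\begin{align*}
|\beta(\varphi,u)| \le \frac{\beta}{|\lambda|}\left(\|\varphi\|\|v\| + \|\varphi\|\|f_1\|\right).
\end{align*}
For $|\lambda|\ge1$ this is bounded by $C(\|v\|\|\varphi\| + \|U\|_{\mathcal{H}_1}\|F\|_{\mathcal{H}_1})$. Here $\|\varphi\|\le C\|U\|_{\mathcal{H}_1}$, since $\varphi\in H^1_*$ and $\|\varphi_x\|\le \|\varphi_x+\psi\|+\|\psi\|$ is controlled by the norm.
\item The term $\gamma(v,u)$ is handled the same way: $|\gamma(v,u)| \le \frac{\gamma}{|\lambda|}(\|v\|^2+\|v\|\|f_1\|) \le C\|U\|_{\mathcal{H}_1}\|F\|_{\mathcal{H}_1}$. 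Alternatively, use Young's inequality to absorb $\frac{\alpha}{2}\|u_x\|^2$ and bound the remainder by $\|v\|^2$.
\end{itemize}
Collecting these bounds yields \eqref{estimate-ux-1}. For small $|\lambda|$, where the $1/|\lambda|$ trick fails, apply Young to $\beta(\varphi,u)$ instead and absorb part of it into $\beta\|u\|^2$. Since only $|\lambda|\ge1$ matters later, we state the constant for $|\lambda|\ge1$.

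\textbf{Step 3: the $\epsilon$-version.} Deducing \eqref{estimate-ux-2} from \eqref{estimate-ux-1} is where care is needed. We use Lemma \ref{lemma-v,q} to get $\|v\|\le C\|U\|_{\mathcal{H}_1}^{1/2}\|F\|_{\mathcal{H}_1}^{1/2}$, together with $\|\varphi\|\le C\|U\|_{\mathcal{H}_1}$. This gives
\begin{align*}
\|v\|\|\varphi\| \le C\|U\|_{\mathcal{H}_1}^{3/2}\|F\|_{\mathcal{H}_1}^{1/2} \le \frac{\epsilon}{2}\|U\|_{\mathcal{H}_1}^2 + C_\epsilon\|F\|_{\mathcal{H}_1}^2,
\end{align*}
by Young's inequality with exponents $4/3$ and $4$. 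Similarly, $\|U\|_{\mathcal{H}_1}\|F\|_{\mathcal{H}_1}\le \frac{\epsilon}{2}\|U\|_{\mathcal{H}_1}^2 + C_\epsilon\|F\|_{\mathcal{H}_1}^2$. All constants are independent of $\lambda$ for $|\lambda|\ge1$, which gives \eqref{estimate-ux-2}.

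\textbf{Main obstacle.} The delicate point is the coupling term $\beta(\varphi,u)$. It carries no damping, so it must be controlled either through the factor $1/\lambda$ or through the fractional-power Young argument above. Keeping the constants uniform in $\lambda$ is exactly why the restriction $|\lambda|\ge1$ appears in the statement.
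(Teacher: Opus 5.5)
Your proposal is correct and follows essentially the same route as the paper: you multiply the cable equation by $u$, rewrite $i\lambda\rho(v,u)=-\rho\|v\|^2-\rho(v,f_1)$, substitute $u=(v+f_1)/(i\lambda)$ in the coupling term $\beta(\varphi,u)$ for $|\lambda|\ge1$, and then pass to the $\epsilon$-form using Lemma \ref{lemma-v,q} and Young's inequality; your $1/\lambda$ treatment of $\gamma(v,u)$ and your one-step Young with exponents $4/3,4$ are only cosmetic variants of the paper's absorption into $\beta\|u\|^2$ and its two-step Young. Drop the aside about small $|\lambda|$: applying Young to $\beta(\varphi,u)$ and absorbing into $\beta\|u\|^2$ leaves a term $\frac{\beta}{2}\|\varphi\|^2$ that is not of the form on the right of \eqref{estimate-ux-1}, although this does not affect your argument, since both you and the paper only need the estimate for $|\lambda|\ge1$.
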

  \begin{proof}
  	Taking the $L^2$-inner product of equation \eqref{Resolvent equations}$_2$ with $u$ and then applying integration by parts yield
  	\begin{align}\label{eq2.1}
  		i \lambda \rho (v, u) + \alpha \| u_x \|^2 - \beta (\varphi, u) + \beta \| u \|^2 + \gamma (v, u) = \rho (f_2, u).
  	\end{align}
  	By \eqref{Resolvent equations}$_1$, we get
  	\begin{align}
  		&i \lambda \rho (v, u) = - \rho(v,i\lambda u) = - \rho(v,v+f_1) = -\rho \|v\|^2 - \rho (v,f_1),\label{eq2.2}\\
  		&\beta(\varphi,u) = \beta(\varphi,\frac{1}{i\lambda}(v+f_1))=\frac{i\beta}{\lambda}(\varphi,v)+\frac{i\beta}{\lambda}(\varphi,f_1).\label{eq2.3}
  	\end{align}
  	Substitute \eqref{eq2.2} and \eqref{eq2.3} into \eqref{eq2.1}
  	\begin{align*}
  		\alpha \| u_x \|^2 + \beta \| u \|^2
  		= \rho \| v \|^2 + \rho (v, f_1) + \frac{i\beta}{\lambda} (\varphi, v) +\frac{i\beta}{ \lambda} (\varphi, f_1) - \gamma (v, u) + \rho (f_2, u).
  	\end{align*}
  	Thus,
  	\begin{align*}
  		\alpha \|u_x\|^2 + \beta \|u\|^2
  		\leq C\|U\|_{\mathcal{H}_1}\|F\|_{\mathcal{H}_1} + \beta \|\varphi\|\|v\| + \gamma \|v\|\|u\|,
  	\end{align*}
  	for $|\lambda| \geq 1$.
  	By Young's inequality, we get
  	\begin{align*}
  		\|u_x\|^2 \leq C(\|U\|_{\mathcal{H}_1}\|F\|_{\mathcal{H}_1} + \|v\|\|\varphi\|).
  	\end{align*}
  	By estimate \eqref{estimate-v} and Young's inequality, we obtain
  	\begin{align*}
  		\|u_x\|^2\leq C\|U\|_{\mathcal{H}_1}\|F\|_{\mathcal{H}_1} + (\epsilon \|\varphi\|^2 + C_{\epsilon}\|v\|^2) \leq \epsilon\|U\|_{\mathcal{H}_1}^2 + C_{\epsilon}\|F\|_{\mathcal{H}_1}^2.
  	\end{align*}
  \end{proof}
  \begin{lemma}\label{lemma-theta}
  	There exists a constant $C>0$ such that
  	\begin{align}\label{estimate-thete-1}
  		\|\theta\|^2 \leq C(\|U\|_{\mathcal{H}_1}\|F\|_{\mathcal{H}_1} + \|q\|\|\Psi\| + \|q\|\|\Phi\|).
  	\end{align}
  	In particular, for any $\epsilon>0$, there exists a constant $C_{\epsilon}>0$ (independent of $\lambda$) such that
  	\begin{align}\label{estimate-theta-2}
  		\|\theta\|^2 \leq \epsilon\|U\|_{\mathcal{H}_1}^2 + C_{\epsilon}\|F\|_{\mathcal{H}_1}^2,
  	\end{align}
  	for $|\lambda| \geq 1$.
  \end{lemma}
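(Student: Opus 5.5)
We estimate $\theta$ by testing the Fourier-type law \eqref{Resolvent equations}$_8$ against an antiderivative of $\theta$, mirroring the way Lemma \ref{lemma-ux} handled $u$. The point is that $\theta$ appears in the $q$-equation only through $\theta_x$. Since $\theta\in H_0^1$, we set
\[
\Theta(x):=\int_0^x\theta(s)\,ds,
\]
so that $\Theta_x=\theta$ and $\|\Theta\|\le C\|\theta\|$. We take the $L^2$-inner product of \eqref{Resolvent equations}$_8$ with $\Theta$. Integrating by parts in the last term gives
\[
(\theta_x,\Theta)=\theta\overline{\Theta}\big|_0^L-\|\theta\|^2=-\|\theta\|^2 ,
\]
because $\theta(0)=\theta(L)=0$. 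This yields
\[
\|\theta\|^2 = i\lambda\tau(q,\Theta)+\sigma(q,\Theta)-\tau(f_8,\Theta).
\]
The last two terms are bounded by $C\|q\|\|\theta\|+C\|F\|_{\mathcal{H}_1}\|U\|_{\mathcal{H}_1}$.

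The main step is the term $i\lambda\tau(q,\Theta)=-\tau(q,i\lambda\Theta)$. To handle it we integrate \eqref{Resolvent equations}$_7$ from $0$ to $x$, which expresses $i\lambda\Theta$ through quantities already present:
\[
i\lambda\rho_3\Theta=-q(x)+q(0)-m\Big(\Phi(x)-\Phi(0)+\int_0^x\Psi\Big)+\rho_3\int_0^x f_7 .
\]
The point values $q(0)$ and $\Phi(0)$ are a nuisance. To avoid them, we instead test \eqref{Resolvent equations}$_8$ with $\Theta-\frac{x}{L}\Theta(L)$, or equivalently use $\int_0^x$ of the equation combined with its mean. A cleaner route, which we would take first, is to take the primitive of the $\theta$-equation in its weak form. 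Concretely, we pair \eqref{Resolvent equations}$_7$ with $\int_0^x \overline{q}$ shifted to mean zero.

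In either form, $(q,i\lambda\Theta)$ reduces to four pieces:
\begin{itemize}
\item $\|q\|^2$, which is bounded by $\|U\|_{\mathcal{H}_1}\|F\|_{\mathcal{H}_1}$ via Lemma \ref{lemma-v,q};
\item $|m|\,\|q\|(\|\Phi\|+\|\Psi\|)$;
\item $\|q\|\|f_7\|$;
\item constant multiples of $\|q\|$ times boundary or mean terms.
\end{itemize}
The boundary and mean terms are controlled by $\|\Phi\|$, $\|\Psi\|$ and $\|q\|_{L^1}\le C\|q\|$, using $\Phi\in L_*^2$ and Cauchy–Schwarz. The residual term $q(0)$ is eliminated by choosing the antiderivative of the mean-zero part. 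In that case the constant is determined by integrating \eqref{Resolvent equations}$_7$ over $(0,L)$, which gives $i\lambda\rho_3\int\theta+q(L)-q(0)+m\int\Psi=\rho_3\int f_7$.

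Collecting these bounds and applying Young's inequality to absorb $C\|q\|\|\theta\|$ into the left side gives \eqref{estimate-thete-1}. Then \eqref{estimate-theta-2} follows from \eqref{estimate-q}: we have $\|q\|\|\Psi\|+\|q\|\|\Phi\|\le C\|U\|_{\mathcal{H}_1}^{3/2}\|F\|_{\mathcal{H}_1}^{1/2}$, and Young's inequality bounds this by $\epsilon\|U\|_{\mathcal{H}_1}^2+C_\epsilon\|F\|_{\mathcal{H}_1}^2$.

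We expect the main obstacle to be the boundary or mean bookkeeping in the antiderivative step, that is, ensuring that no uncontrolled trace of $q$ survives. If that proves awkward, the fallback is to multiply \eqref{Resolvent equations}$_8$ by $\overline{\Theta}$ and \eqref{Resolvent equations}$_7$ by $\overline{\int_0^x q}$ and add. The $i\lambda$ terms then cancel after one integration by parts, since $\theta|_{0,L}=0$, leaving only $\|q\|^2$, the $\Phi,\Psi$ couplings and $F$-terms.
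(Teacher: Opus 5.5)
Your opening identity is correct: testing \eqref{Resolvent equations}$_8$ with $\Theta=\int_0^x\theta$ gives $\|\theta\|^2=i\lambda\tau(q,\Theta)+\sigma(q,\Theta)-\tau(f_8,\Theta)$, which is essentially the paper's starting point. The gap is in the step you leave open, and that step is the whole content of the lemma.

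Inserting your integrated form of \eqref{Resolvent equations}$_7$ into $(q,i\lambda\Theta)$ leaves the term $\rho_3^{-1}\,\overline{(q(0)+m\Phi(0))}\int_0^Lq\,dx$. Point values of $q$ and $\Phi$ are not controlled by $\|q\|,\|\Phi\|,\|\Psi\|$. Only $q,\Phi\in H^1$ is known, and $\|q_x\|$, $\|\Phi_x\|$ are of size $|\lambda|\,\|U\|_{\mathcal{H}_1}$. So your claim that the boundary terms follow from Cauchy--Schwarz and $\Phi\in L^2_*$ is false.

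Your proposed cure uses the wrong equation. Integrating \eqref{Resolvent equations}$_7$ over $(0,L)$ gives $i\lambda\rho_3\int_0^L\theta+q(L)-q(0)+m(\Phi(L)-\Phi(0))+m\int_0^L\Psi=\rho_3\int_0^Lf_7$. You dropped $m(\Phi(L)-\Phi(0))$, which need not vanish because $\Phi\in H^1_*$ carries no boundary condition. This identity only trades traces for other traces plus the large term $i\lambda\int_0^L\theta$. The fallback has the same defect: integrating $(q_x+m\Phi_x,\int_0^xq)$ by parts produces $(q(L)+m\Phi(L))\,\overline{\int_0^Lq}$. So it does not leave ``only $\|q\|^2$, the $\Phi,\Psi$ couplings and $F$-terms''.

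The missing idea is the zero-mode identity from \eqref{Resolvent equations}$_8$ itself. Integrating it over $(0,L)$ and using $\theta\in H^1_0$ gives $(i\lambda\tau+\sigma)\int_0^Lq\,dx=\tau\int_0^Lf_8\,dx$. Hence $|\lambda|\,\bigl|\int_0^Lq\,dx\bigr|\le C(\|q\|+\|F\|_{\mathcal{H}_1})$, which lets you move the factor $\lambda$ onto $\int_0^Lq$.

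Concretely, average your integrated $\theta$-equation over $x\in(0,L)$:
$q(0)+m\Phi(0)=\frac1L\int_0^L\bigl(i\lambda\rho_3\Theta+q+m\Phi+m\int_0^x\Psi-\rho_3\int_0^xf_7\bigr)\,dx$.
Multiply by $\overline{\int_0^Lq}$. Every resulting term is bounded by $C(\|q\|^2+\|q\|\|\Psi\|+\|q\|\|F\|_{\mathcal{H}_1})$; the $\Phi$-term vanishes since $\int_0^L\Phi=0$. The one exception is $\frac{\rho_3}{L}\,\overline{i\lambda\int_0^L\Theta}\,\int_0^Lq=-\frac{\rho_3}{L}\,\overline{\int_0^L\Theta}\,\bigl(i\lambda\int_0^Lq\bigr)$. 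The zero-mode identity bounds this by $C\|\theta\|(\|q\|+\|F\|_{\mathcal{H}_1})$.

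This is precisely the mechanism of the paper's $R_2$ and $R_3$. There the trace is $q(L)+m\Phi(L)$, obtained by integrating \eqref{Resolvent equations}$_7$ over $(x,L)$, and $R_3$ is closed by this same integrated form of \eqref{Resolvent equations}$_8$.

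Your own suggestion of using the primitive of the mean-zero part of $q$ also works, provided the mean part of $q$ is handled by this identity rather than by \eqref{Resolvent equations}$_7$. The primitive of the mean-zero part vanishes at both endpoints, so no traces appear. With this step supplied, the rest of your argument is correct: absorbing $C\|q\|\|\theta\|$, and deducing \eqref{estimate-theta-2} from \eqref{estimate-q} via $\|q\|(\|\Phi\|+\|\Psi\|)\le C\|U\|_{\mathcal{H}_1}^{3/2}\|F\|_{\mathcal{H}_1}^{1/2}$.
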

  \begin{proof}
  	For any $x \in (0,L)$, integrating equation \eqref{Resolvent equations}$_8$ over the interval $(0, x)$ yields
  	\begin{align}\label{eq2.4}
  		i \lambda \tau \int_{0}^{x} q(y)  dy + \sigma \int_{0}^{x} q(y) dy + \theta(x) = \tau \int_{0}^{x} f_8(y)  dy.
  	\end{align}
  	Take the $L^2$-inner product of equation \eqref{eq2.4} with $\theta$
  	\begin{align}\label{eq-theta}
  		R_1 + \sigma \left( \int_{0}^{x} q(y)  dy, \theta \right) + \|\theta\|^2 = \tau \left( \int_{0}^{x} f_8(y)  dy, \theta \right)
  	\end{align}
  	where $R_1$ is defined as
  	\begin{align*}
  		R_1 = i \lambda \tau \left( \int_{0}^{x} q(y)  dy, \theta \right).
  	\end{align*}
  	From equation \eqref{Resolvent equations}$_7$, we obtain
  	\begin{align}\label{eq2.5}
  		i\lambda \theta = f_7 - \frac{1}{\rho_3} q_x - \frac{m}{\rho_3}(\Phi_x + \Psi).
  	\end{align}
  	Substitute equation \eqref{eq2.5} into $R_1$
  	\begin{align*}
  		R_1 &= -\tau \left( \int_{0}^{x} q(y)  dy, i\lambda \theta \right) \\
  		&= -\tau \left( \int_{0}^{x} q(y)  dy,  f_7 - \frac{1}{\rho_3} q_x - \frac{m}{\rho_3} \Phi_x - \frac{m}{\rho_3} \Psi \right) \\
  		&= -\tau \left( \int_{0}^{x} q(y)  dy,  f_7 \right) + \frac{\tau}{\rho_3} \left( \int_{0}^{x} q(y)  dy,  q_x \right) + \frac{\tau m}{\rho_3} \left( \int_{0}^{x} q(y)  dy, \Phi_x \right) + \frac{\tau m}{\rho_3} \left( \int_{0}^{x} q(y)  dy,  \Psi \right)  \\
  		&= -\tau \left( \int_{0}^{x} q(y)  dy,  f_7 \right) + \frac{\tau m}{\rho_3} \left( \int_{0}^{x} q(y)  dy,  \Psi \right) + R_2 - \frac{\tau}{\rho_3} \|q\|^2 - \frac{\tau m}{\rho_3}(q,\Phi).
  	\end{align*}
  	where $R_2$ is given by
  	\begin{align*}
  		R_2=\frac{\tau}{\rho_3}  \int_{0}^{L} q(y)  dy  (\overline{q}(L) + m  \overline{\Phi}(L))
  	\end{align*}
  	Integrating equation \eqref{Resolvent equations}$_7$ over the interval $(x, L)$, we get
  	\begin{align}\label{eq2.6}
  		i\lambda\rho_3 \int_x^L \theta(y)  dy + q(L) - q(x) + m\Phi(L) - m\Phi(x) + m \int_x^L \Psi(y)  dy = \rho_3 \int_x^L f_7(y)  dy.
  	\end{align}
  	Multiplying equation \eqref{eq2.6} by $\int_{0}^{L}\overline{q}(x)dx$, we derive
  	\begin{align*}
  		\frac{\rho_3}{\tau}\overline{R}_2 =& \big(q(L) + m\Phi(L)\big) \int_0^L \overline{q}(x) dx \\
  		=& \big(q(x) + m\Phi(x)\big) \int_0^L \overline{q}(x) dx - R_3 - m \int_x^L \Psi(y) dy \int_0^L \overline{q}(x) dx  + \rho_3 \int_x^L f_7(y) dy \int_0^L \overline{q}(x) dx,
  	\end{align*}
  	where $R_3=i\lambda\rho_3 \int_x^L \theta(y) dy \int_0^L \overline{q}(x) dx $. Equation \eqref{Resolvent equations}$_8$ yields
  	\begin{align}\label{eq2.7}
  		i\lambda q=f_8 - \frac{\sigma}{\tau}q - \frac{1}{\tau}\theta_x.
  	\end{align}
  	From equation \eqref{eq2.7}, it follows that
  	\begin{align*}
  		R_3 &= -\rho_3 \int_x^L \theta(y)  dy \int_0^L \overline{i\lambda q(x)}  dx \\
  		&= -\rho_3 \int_x^L \theta(y)  dy \int_0^L \left( \overline{f_8} - \frac{\sigma}{\tau}\overline{q} - \frac{1}{\tau} \overline{\theta_x} \right) dx \\
  		&= -\rho_3 \int_x^L \theta(y)  dy \int_0^L \overline{f_8}(x)  dx + \frac{\sigma \rho_3}{\tau} \int_x^L \theta(y)  dy \int_0^L \overline{q}(x)  dx.
  	\end{align*}
  	Substitute $R_3$ into $R_2$, then substitute $R_2$ into $R_1$, and further substitute $R_1$ into equation \eqref{eq-theta}. Integrate the resulting equation over the interval $(0, L)$; the estimate \eqref{estimate-thete-1} can be derived via H\"older's inequality and Young's inequality immediately. By estimate \eqref{estimate-q} and Young's inequality, estimate \eqref{estimate-theta-2} is a direct consequence of estimate \eqref{estimate-thete-1}.
  \end{proof}
  \begin{lemma}\label{lemma-varphix}
  	There exists a constant $C>0$ such that
  	\begin{align}\label{estimate-varphix-1}
  		\|\varphi_x + \psi\|^2 \leq C(\|U\|_{\mathcal{H}_1}\|F\|_{\mathcal{H}_1} + \frac{1}{\lambda}\|U\|_{\mathcal{H}_1}\|F\|_{\mathcal{H}_1} + \|q\|\|\Psi\| + \|q\|\|\Phi\| + \frac{1}{\lambda}\|q\|\|\varphi - u \|).
  	\end{align}
  	In particular, for any $\epsilon>0$, there exists a constant $C_{\epsilon}>0$ (independent of $\lambda$) such that
  	\begin{align}\label{estimate-varphix-2}
  		\|\varphi_x + \psi\|^2 \leq \epsilon\|U\|_{\mathcal{H}_1}^2 + C_{\epsilon}\|F\|_{\mathcal{H}_1}^2
  	\end{align}
  	for $|\lambda| \geq 1$.
  \end{lemma}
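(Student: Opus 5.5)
The natural test function is $\int_0^x q$, which encodes $\varphi_x+\psi$ through the heat equation. The quantity $\varphi_x+\psi$ is produced by the thermal equation \eqref{Resolvent equations}$_7$ through the term $m(\Phi_x+\Psi)=m\,i\lambda(\varphi_x+\psi)-m(f_{3x}+f_5)$. The plan is to pair \eqref{Resolvent equations}$_7$ with $\overline{\varphi_x+\psi}$, or equivalently to reuse the integrated form \eqref{eq2.4} of the Cattaneo law, and then to trade time derivatives for the dissipated quantities $q$ and $\theta$. Those quantities are already controlled by Lemmas \ref{lemma-v,q} and \ref{lemma-theta}.

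\textbf{Step 1.} Take the $L^2$-inner product of the shear equation \eqref{Resolvent equations}$_4$ with $\int_0^x q(y)\,dy$. This function does not vanish at $x=L$, so the boundary term $\kappa(\varphi_x+\psi)(L)\int_0^L \bar q$ appears. By the boundary conditions, $\varphi_x(L)=0$ and $\psi(L)=0$, so this term vanishes. Integration by parts then gives
\[
\kappa(\varphi_x+\psi,\bar q)+m(\theta_x,\textstyle\int_0^x q)+\beta(\varphi-u,\int_0^x q)+i\lambda\rho_1(\Phi,\int_0^x q)=\rho_1(f_4,\int_0^x q).
\]

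\textbf{Step 2.} Replace $i\lambda q$ using \eqref{eq2.7}, namely $i\lambda q=f_8-\frac{\sigma}{\tau}q-\frac1\tau\theta_x$. This turns $i\lambda\rho_1(\Phi,\int_0^x q)$ into $-\rho_1(\Phi,\int_0^x i\lambda q)$ plus lower-order terms. These produce $\frac{\rho_1}{\tau}(\Phi,\theta)$, terms of the form $\|q\|\|\Phi\|$, and $F$-terms.

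\textbf{Step 3.} Treat $(\theta_x,\int_0^x q)=-(\theta,q)+\theta\cdot\int q\,\big|_0^L$; the boundary term vanishes because $\theta\in H_0^1$. This leaves $\kappa(\varphi_x+\psi,q)$ expressed through $(\Phi,\theta)$, $\|q\|\|\Phi\|$, $\|q\|\|\varphi-u\|$ and $F$-terms.

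\textbf{Step 4.} To reach $\|\varphi_x+\psi\|^2$ itself, use \eqref{Resolvent equations}$_8$ in the form $q=-(\theta_x+i\lambda\tau q-\tau f_8)/\sigma$. Simultaneously, pair the thermal equation \eqref{Resolvent equations}$_7$ with $\overline{\varphi_x+\psi}/(i\lambda)$. Combining the two isolates $m\|\varphi_x+\psi\|^2$ up to: the term $(\theta,\cdot)$, bounded by Lemma \ref{lemma-theta}; the term $\frac{1}{\lambda}(q_x,\varphi_x+\psi)=-\frac1\lambda(q,(\varphi_x+\psi)_x)$, where $(\varphi_x+\psi)_x$ is eliminated via \eqref{Resolvent equations}$_4$ and yields the factors $\frac1\lambda\|q\|\|\varphi-u\|$ and $\|q\|\|\Phi\|$; and the cross term $(\Psi,\varphi_x+\psi)$-type contributions, which produce $\|q\|\|\Psi\|$ after substituting $\theta$ through the integrated Cattaneo law \eqref{eq2.4} as in the proof of Lemma \ref{lemma-theta}. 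Each remaining term is exactly of the type listed in \eqref{estimate-varphix-1}, including $\lambda^{-1}\|U\|\|F\|$ from dividing $F$-terms by $\lambda$.

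\textbf{Step 5.} Estimate \eqref{estimate-varphix-2} then follows from \eqref{estimate-varphix-1} by inserting \eqref{estimate-q} and applying Young's inequality, using $|\lambda|\ge1$. For example, $\|q\|\|\Psi\|\le \epsilon\|U\|^2+C_\epsilon\|q\|^2\le\epsilon\|U\|^2+C_\epsilon\|U\|\|F\|$, and this is absorbed again by Young's inequality.

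The main obstacle is Step 4, namely controlling the boundary and cross terms so that no term of the form $\|\Psi\|^2$ or $\|\psi_x\|^2$ appears without a factor of $\|q\|$ or $\|\theta\|$. The multiplier must be chosen so that every such contribution carries a dissipative factor.
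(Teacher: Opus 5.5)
Your Step 4 is essentially the paper's proof. The paper substitutes $\Phi_x+\Psi=i\lambda(\varphi_x+\psi)-f_{3x}-f_5$ into the thermal equation, pairs it with $\varphi_x+\psi$, integrates $q_x$ by parts, and eliminates $(\varphi_x+\psi)_x$ through the shear equation. The resulting term $\tfrac{m}{\kappa}(q,\theta_x)$, which you did not list, is converted by the Cattaneo law into $\|q\|^2$-terms that Lemma \ref{lemma-v,q} controls. It then divides by $\lambda$ and absorbs $\rho_3(\theta,\varphi_x+\psi)$ via Young's inequality and Lemma \ref{lemma-theta}, which is where $\|q\|\|\Psi\|$ enters, not a separate cross term. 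Your Steps 1--3 (the $\int_0^x q$ multiplier on the shear equation) only give an identity for $(\varphi_x+\psi,q)$ that is never used and can be dropped.
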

  \begin{proof}
  	From equations \eqref{Resolvent equations}$_{3,5}$, it follows that
  	\begin{align} \label{eq2.8}
  		\Phi_{x} = i \lambda \varphi_{x} - f_{3x}, \quad \Psi = i \lambda \psi - f_{5}.
  	\end{align}
  	Substituting equations \eqref{eq2.8} into \eqref{Resolvent equations}$_7$, we obtain
  	\begin{align}\label{eq2.9}
  		i \lambda \rho_{3} \theta + q_x + i m \lambda (\varphi_{x} + \psi) = m (f_{3x} + f_{5}) + \rho_{3} f_{7}.
  	\end{align}
  	Taking the $L^2$-inner product of \eqref{eq2.9} with $\varphi_x+\psi$ and integrating by parts, we derive
  	\begin{align}\label{eq-varphix}
  		i \lambda \rho_3 (\theta, \varphi_x + \psi) - R_4 + i m \lambda \|\varphi_x + \psi\|^2 = (m(f_{3x} + f_5) + \rho_3 f_7, \varphi_x + \psi).
  	\end{align}
  	where $R_4=(q, (\varphi_x + \psi)_x)$.
  	Equation \eqref{Resolvent equations}$_4$ yields
  	\begin{align}\label{eq2.10}
  		\kappa (\varphi_x + \psi)_x = i \lambda \rho_1 \Phi + \beta (\varphi - u) + m \theta_x - \rho_1 f_4.
  	\end{align}
  	Substitute equation \eqref{eq2.10} into $R_4$
  	\begin{align*}
  		R_4 &= \frac{1}{\kappa}(q, i \lambda \rho_1 \Phi + \beta (\varphi - u) + m \theta_x - \rho_1 f_4) \\
  		&= -\frac{i \lambda \rho_1}{\kappa}(q, \Phi) + \frac{\beta}{\kappa}(q, \varphi - u) + R_5 - \frac{\rho_1}{\kappa}(q, f_4).
  	\end{align*}
  	where $R_5=\frac{m}{\kappa}(q, \theta_x)$.
  	From equation \eqref{Resolvent equations}$_8$, it follows that
  	\begin{align}\label{eq2.11}
  		\theta_x = \tau f_8 - i\lambda \tau q - \sigma q.
  	\end{align}
  	Substitute equation \eqref{eq2.11} into $R_5$
  	\begin{align*}
  		R_5 = \frac{m}{\kappa} (q, \tau f_8 - i \lambda \tau q - \sigma q) = \frac{m \tau}{\kappa} (q, f_8) + \frac{i m \lambda \tau}{\kappa} \| q \|^2 - \frac{m \sigma}{\kappa} \| q \|^2.
  	\end{align*}
  	Substitute $R_5$ into $R_4$, and substitute $R_4$ into equation \eqref{eq-varphix}. By H\"older's inequality, we derive
  	\begin{align*}
  		\|\varphi_x + \psi\|^2 &\leq C\left(\|\theta\|\|\varphi_x + \psi\| + \frac{1}{\lambda}\|U\|_{\mathcal{H}_1}\|F\|_{\mathcal{H}_1} + \|q\|\|\Phi\| + \frac{1}{\lambda}\|q\|\|\varphi - u\| + \|q\|^2 + \frac{1}{\lambda}\|q\|^2\right) \\
  		&\leq C\left(\|U\|_{\mathcal{H}_1}\|F\|_{\mathcal{H}_1} + \frac{1}{\lambda}\|U\|_{\mathcal{H}_1}\|F\|_{\mathcal{H}_1} + \|\theta\|\|\varphi_x + \psi\| + \|q\|\|\Phi\| + \frac{1}{\lambda}\|q\|\|\varphi - u\|\right).
  	\end{align*}
  	Using estimate \eqref{estimate-thete-1}, by Young's inequality, we obtain
  	\begin{align*}
  		\|\varphi_x + \psi\|^2 &\leq C\left(\|U\|_{\mathcal{H}_1}\|F\|_{\mathcal{H}_1} + \frac{1}{\lambda}\|U\|_{\mathcal{H}_1}\|F\|_{\mathcal{H}_1} + \|\theta\|^2 + \|q\|\|\Phi\| + \frac{1}{\lambda}\|q\|\|\varphi - u\|\right)\\
  		&\leq C\left(\|U\|_{\mathcal{H}_1}\|F\|_{\mathcal{H}_1} + \frac{1}{\lambda}\|U\|_{\mathcal{H}_1}\|F\|_{\mathcal{H}_1} + \|q\|\|\Psi\| + \|q\|\|\Phi\| + \frac{1}{\lambda}\|q\|\|\varphi - u\|\right).
  	\end{align*}
  	The proof of \eqref{estimate-varphix-1}is completed. Using Young's inequality and estimate \eqref{estimate-q}, estimate \eqref{estimate-varphix-2} is a direct consequence of estimate \eqref{estimate-varphix-1}, for $|\lambda| \geq 1$.
  \end{proof}

  \begin{lemma}\label{lemma-varphi-u}
  	There exists a constant $C>0$ such that
  	\begin{align}\label{estimate-varphi-u-1}
  		\|\varphi - u\|^2 \leq C(\|U\|_{\mathcal{H}_1}\|F\|_{\mathcal{H}_1}  + \|v|\|\Phi\| + \|v\|\|\varphi\| + \|u_x\|\|\varphi_x\|).
  	\end{align}
  	In particular, for any $\epsilon>0$, there exists a constant $C_{\epsilon}>0$ (independent of $\lambda$) such that
  	\begin{align}\label{estimate-varphi-u-2}
  		\|\varphi - u\|^2 \leq \epsilon\|U\|_{\mathcal{H}_1}^2 + C_{\epsilon}\|F\|_{\mathcal{H}_1}^2
  	\end{align}
  	for $|\lambda| \geq 1$.
  \end{lemma}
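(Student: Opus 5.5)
Multiplying equation \eqref{Resolvent equations}$_2$ by $\varphi-u$ or $\overline{\varphi}$ would produce a boundary-free identity, but the clean approach is to test the $u$-equation against $\varphi$ and then use the $\varphi$-equation's mass balance. Concretely, take the $L^2$-inner product of \eqref{Resolvent equations}$_2$ with $\varphi$:
\begin{align*}
i\lambda\rho(v,\varphi) + \alpha(u_x,\varphi_x) - \beta(\varphi-u,\varphi) + \gamma(v,\varphi) = \rho(f_2,\varphi).
\end{align*}
The boundary terms vanish because $u_x\in H_0^1$. Similarly, take the inner product of \eqref{Resolvent equations}$_2$ with $u$, which is exactly \eqref{eq2.1}. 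Subtracting the second identity from the first gives
\begin{align*}
\beta\|\varphi-u\|^2 = i\lambda\rho(v,\varphi-u) + \alpha(u_x,\varphi_x-u_x) + \gamma(v,\varphi-u) - \rho(f_2,\varphi-u).
\end{align*}

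Each term on the right is then handled as follows.
\begin{itemize}
\item The term $\gamma(v,\varphi-u)$ is bounded by $C\|v\|(\|\varphi\|+\|u\|)$. Since $\|u\|\le C\|u_x\|$ by Poincar\'e on $H_*^1$, it is dominated by $\|v\|\|\varphi\| + \|U\|_{\mathcal{H}_1}\|F\|_{\mathcal{H}_1}$ after using Lemma \ref{lemma-v,q}.
\item The term $\alpha(u_x,\varphi_x-u_x)$ gives $\|u_x\|\|\varphi_x\|$ plus $\|u_x\|^2$, and the latter is controlled by \eqref{estimate-ux-1}.
\item The $f_2$ term is bounded by $C\|U\|_{\mathcal{H}_1}\|F\|_{\mathcal{H}_1}$.
\end{itemize}

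The key term is $i\lambda\rho(v,\varphi-u) = -\rho(v, i\lambda\varphi - i\lambda u)$. Using \eqref{Resolvent equations}$_{1,3}$ we have $i\lambda\varphi = \Phi+f_3$ and $i\lambda u = v+f_1$, so it equals $-\rho(v,\Phi) + \rho\|v\|^2 - \rho(v,f_3) + \rho(v,f_1)$. This is bounded by $C(\|v\|\|\Phi\| + \|U\|_{\mathcal{H}_1}\|F\|_{\mathcal{H}_1})$, again via \eqref{estimate-v}. Collecting all the bounds yields \eqref{estimate-varphi-u-1}.

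For \eqref{estimate-varphi-u-2}, note that $\|v\|\le (\gamma^{-1}\|U\|_{\mathcal{H}_1}\|F\|_{\mathcal{H}_1})^{1/2}$. So the terms $\|v\|\|\Phi\|$ and $\|v\|\|\varphi\|$ are handled by Young's inequality as $\epsilon\|U\|^2_{\mathcal{H}_1} + C_\epsilon\|v\|^2 \le \epsilon\|U\|^2_{\mathcal{H}_1} + C_\epsilon\|F\|_{\mathcal{H}_1}^2$. Here $\|\varphi\|\le c\|\varphi_x\|\le c(\|\varphi_x+\psi\|+\|\psi\|)\le C\|U\|_{\mathcal{H}_1}$, using Poincar\'e for $\psi\in H_0^1$ and $\varphi\in H_*^1$.

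The main obstacle is the term $\|u_x\|\|\varphi_x\|$. Since $\|\varphi_x\|\le C\|U\|_{\mathcal{H}_1}$, I would write $\|u_x\|\|\varphi_x\|\le \epsilon'\|U\|^2_{\mathcal{H}_1} + C_{\epsilon'}\|u_x\|^2$. I would then insert \eqref{estimate-ux-2} with a small parameter $\epsilon''$ chosen so that $C_{\epsilon'}\epsilon''\le\epsilon/2$. This gives $\epsilon\|U\|_{\mathcal{H}_1}^2 + C_\epsilon\|F\|_{\mathcal{H}_1}^2$ for $|\lambda|\ge1$, completing the proof.
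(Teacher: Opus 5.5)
Your proof is correct and is essentially the paper's argument. Subtracting the $u$-test from the $\varphi$-test is exactly taking the inner product of \eqref{Resolvent equations}$_2$ with $\varphi-u$; no mass balance from the $\varphi$-equation is ever used. The paper then treats $i\lambda\rho(v,\varphi-u)$ via \eqref{Resolvent equations}$_{1,3}$ and the $u_x$ terms via \eqref{estimate-ux-1} and \eqref{estimate-ux-2}, just as you do.

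There is one small slip. The piece $\|v\|\|u\|\le C\|v\|\|u_x\|$ is not controlled by Lemma \ref{lemma-v,q} alone. You also need Young's inequality together with \eqref{estimate-ux-1}, or the relation $i\lambda u=v+f_1$ with $|\lambda|\ge1$. Alternatively, as the paper does, keep $\|v\|\|\varphi-u\|$ intact and absorb it into the left-hand side by Young.
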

  \begin{proof}
  	Take the $L^2$-inner product of equation \eqref{Resolvent equations}$_2$ with $\varphi-u$ and integrate by parts
  	\begin{align}\label{eq-varphi-u}
  		R_6 + \alpha (u_x, \varphi_x - u_x) - \beta \|\varphi - u\|^2 + \gamma (v, \varphi - u) = \rho (f_2, \varphi - u).
  	\end{align}
  	where $R_6$ is defined as
  	\begin{align*}
  		R_6=i \lambda \rho (v, \varphi - u).
  	\end{align*}
  	Using equations \eqref{Resolvent equations}$_{1,3}$, we obtain
  	\begin{align*}
  		R_6 &= -\rho (v, i \lambda \varphi - i \lambda u)
  		= -\rho (v, \Phi + f_3 - v - f_1)
  		= -\rho (v, \Phi) + \rho \|v\|^2 - \rho (v, f_3 - f_1).
  	\end{align*}
  	Substitute $R_6$ into equation \eqref{eq-varphi-u}. By H\"older's inequality, we derive
  	\begin{align*}
  		\|\varphi - u\|^2 \leq C(\|v\|\|\Phi\| + \|U\|_{\mathcal{H}_1}\|F\|_{\mathcal{H}_1} + \|u_x\|\|\varphi_x\| + \|u_x\|^2 + \|v\|\|\varphi - u\|)
  	\end{align*}
  	Using estimates \eqref{estimate-v} and \eqref{estimate-ux-1}, by Young's inequality, we obtain \eqref{estimate-varphi-u-1} immediately. Using estimates \eqref{estimate-v} and \eqref{estimate-ux-2}, \eqref{estimate-varphi-u-2} is a direct consequence of \eqref{estimate-varphi-u-1}.
  \end{proof}

  \begin{lemma}\label{lemma-Phi}
  	There exists a constant $C>0$ such that
  	\begin{align}\label{estimate-Phi-1}
  		\|\Phi\|^2 \leq C(\|U\|_{\mathcal{H}_1}\|F\|_{\mathcal{H}_1}  + \|\varphi_x+\psi|\|\varphi_x\| + \|\varphi-u\|\|\varphi\| + \|\theta\|\|\varphi_x\|).
  	\end{align}
  	In particular, for any $\epsilon>0$, there exists a constant $C_{\epsilon}>0$ (independent of $\lambda$) such that
  	\begin{align}\label{estimate-Phi-2}
  		\|\Phi\|^2 \leq \epsilon\|U\|_{\mathcal{H}_1}^2 + C_{\epsilon}\|F\|_{\mathcal{H}_1}^2
  	\end{align}
  	for $|\lambda| \geq 1$.
  \end{lemma}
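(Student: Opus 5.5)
The plan is to test the fourth resolvent equation \eqref{Resolvent equations}$_4$ against $\varphi$, which lies in $H_*^1$, and then use \eqref{Resolvent equations}$_3$ to turn the inertial term into $-\rho_1\|\Phi\|^2$. Taking the $L^2$-inner product of \eqref{Resolvent equations}$_4$ with $\varphi$ gives
\begin{align*}
i\lambda\rho_1(\Phi,\varphi) - \big(\kappa(\varphi_x+\psi)_x,\varphi\big) + \beta(\varphi-u,\varphi) + m(\theta_x,\varphi) = \rho_1(f_4,\varphi).
\end{align*}
For the first term, \eqref{Resolvent equations}$_3$ gives $i\lambda\rho_1(\Phi,\varphi) = -\rho_1(\Phi, i\lambda\varphi) = -\rho_1\|\Phi\|^2 - \rho_1(\Phi,f_3)$.

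Next I integrate by parts in the second and fourth terms. In the second term, $(\varphi_x+\psi)\bar\varphi$ vanishes at $x=0,L$, because $\varphi_x\in H_0^1$ and $\psi\in H_0^1$ in $\mathcal D(\mathcal A_1)$. Hence $-(\kappa(\varphi_x+\psi)_x,\varphi)=\kappa(\varphi_x+\psi,\varphi_x)$. In the fourth term, $\theta\in H_0^1$, so $m(\theta_x,\varphi)=-m(\theta,\varphi_x)$. This yields the identity
\begin{align*}
\rho_1\|\Phi\|^2 = \kappa(\varphi_x+\psi,\varphi_x) + \beta(\varphi-u,\varphi) - m(\theta,\varphi_x) - \rho_1(\Phi,f_3) - \rho_1(f_4,\varphi).
\end{align*}
By Cauchy--Schwarz, the last two terms are bounded by $C\|U\|_{\mathcal H_1}\|F\|_{\mathcal H_1}$. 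To see this, note that $\|f_3\|\le c\|f_{3x}\|$ by Poincaré in $H_*^1$, and $\|f_{3x}\|$ is controlled by $\|F\|_{\mathcal H_1}$ through $\|f_{3x}+f_5\|$ and $\|f_{5x}\|$; likewise $\|\varphi\|\le C\|U\|_{\mathcal H_1}$. The remaining three terms give exactly the right-hand side of \eqref{estimate-Phi-1}.

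For \eqref{estimate-Phi-2}, the key observation is that $\|\varphi_x\|\le\|\varphi_x+\psi\|+\|\psi\|\le \|\varphi_x+\psi\|+c_p^{1/2}\|\psi_x\|\le C\|U\|_{\mathcal H_1}$. Combined with Poincaré in $H_*^1$, this also gives $\|\varphi\|\le C\|U\|_{\mathcal H_1}$. Each product term is then handled by Young's inequality, for instance
\begin{align*}
\|\varphi_x+\psi\|\|\varphi_x\| \le \epsilon\|U\|_{\mathcal H_1}^2 + C_\epsilon\|\varphi_x+\psi\|^2,
\end{align*}
and similarly with $\|\varphi-u\|^2$ and $\|\theta\|^2$. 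I then insert \eqref{estimate-varphix-2}, \eqref{estimate-varphi-u-2} and \eqref{estimate-theta-2}, each applied with $\epsilon$ replaced by $\epsilon/C_\epsilon$ (renaming constants). The term $C\|U\|_{\mathcal H_1}\|F\|_{\mathcal H_1}$ is split as $\epsilon\|U\|^2+C_\epsilon\|F\|^2$.

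The only point needing care is the boundary terms in the integrations by parts, and these vanish by the domain conditions. Everything else is routine, so I expect no real obstacle; the argument holds for $|\lambda|\ge 1$, where the earlier lemmas are valid.
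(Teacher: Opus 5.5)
Your proposal is correct and follows essentially the same route as the paper: you test \eqref{Resolvent equations}$_4$ against $\varphi$, rewrite $i\lambda\rho_1(\Phi,\varphi)=-\rho_1\|\Phi\|^2-\rho_1(\Phi,f_3)$ via \eqref{Resolvent equations}$_3$, bound the remaining terms by Cauchy--Schwarz, and then combine Young's inequality with \eqref{estimate-varphix-2}, \eqref{estimate-varphi-u-2} and \eqref{estimate-theta-2}. Your extra details are exactly what the paper leaves implicit: the boundary terms vanish by the domain conditions, and Poincar\'e controls $\|f_3\|$ by $\|F\|_{\mathcal{H}_1}$ and $\|\varphi\|$, $\|\varphi_x\|$ by $\|U\|_{\mathcal{H}_1}$.
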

  \begin{proof}
  	Take the $L^2$-inner product of equation \eqref{Resolvent equations}$_4$ with $\varphi$ and integrate by parts
  	\begin{align}\label{eq-Phi}
  		R_7 + \kappa (\varphi_x + \psi, \varphi_x) + \beta (\varphi - u, \varphi) - m (\theta, \varphi_x) = \rho_1 (f_4, \varphi),
  	\end{align}
  	where $R_7 = i \lambda \rho_1 (\Phi, \varphi)$. From equation \eqref{Resolvent equations}$_3$, it follows that
  	\begin{align}
  		R_7 = -\rho_1 (\Phi, i \lambda \varphi) = -\rho_1 (\Phi, \Phi + f_3) = -\rho_1 \|\Phi\|^2 - \rho_1 (\Phi, f_3).
  	\end{align}
  	Substitute $R_7$ into equation \eqref{eq-Phi}. Then by H\"older's inequality, we get \eqref{estimate-Phi-1} immediately. Using estimates \eqref{estimate-varphix-2}, \eqref{estimate-varphi-u-2} and \eqref{estimate-theta-2}, \eqref{estimate-Phi-2} is a direct consequence of \eqref{estimate-Phi-1}.
  \end{proof}

  \begin{lemma}\label{lemma-psix}
  	For $|\lambda| \geq \max\{1, \sigma\}$, the following inequalities hold
  	\begin{enumerate}[(i)]
  		\item if $\frac{\rho_1}{\kappa}=\frac{\rho_2}{b}$, for any $\epsilon>0$, there exists a constant $C_{\epsilon}>0$ (independent of $\lambda$) such that
  		\begin{align}
  			\|\psi_x\|^2 \leq \epsilon\|U\|_{\mathcal{H}_1}^2 + C_{\epsilon} \lambda ^4 \|F\|_{\mathcal{H}_1}^2;
  		\end{align}
  		\item if $\frac{\rho_1}{\kappa} \neq \frac{\rho_2}{b}$, for any $\epsilon>0$, there exists a constant $C_{\epsilon}>0$ (independent of $\lambda$) such that
  		\begin{align}
  			\|\psi_x\|^2 \leq \epsilon\|U\|_{\mathcal{H}_1}^2 + C_{\epsilon}\lambda ^8\|F\|_{\mathcal{H}_1}^2.
  		\end{align}
  	\end{enumerate}
  \end{lemma}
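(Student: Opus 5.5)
We estimate $\|\psi_x\|^2$ from the third resolvent equation, taking its $L^2$-inner product with $\psi$. This handles the potential energy of $\psi$; the cross-coupling term is then controlled through the $\varphi$-equation.

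\textbf{Step 1: the multiplier identity.} Using \eqref{Resolvent equations}$_5$, $i\lambda\psi=\Psi+f_5$, so
\[
b\|\psi_x\|^2=\rho_2\|\Psi\|^2+\rho_2(\Psi,f_5)-\kappa(\varphi_x+\psi,\psi)+m(\theta,\psi)+\rho_2(f_6,\psi).
\]
By Lemmas \ref{lemma-varphix} and \ref{lemma-theta} the two middle terms are $\le\epsilon\|U\|^2+C_\epsilon\|F\|^2$. Hence everything reduces to bounding $\|\Psi\|^2$.

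\textbf{Step 2: estimate $\|\Psi\|^2$.} We use the coupling through the shear term. Take the inner product of \eqref{Resolvent equations}$_6$ with $\varphi_x+\psi$ and of \eqref{Resolvent equations}$_4$ with $-\psi_x$ (the rotation-shear coupling multiplier). The boundary terms vanish thanks to $\varphi_x,\psi\in H_0^1$. Replacing $i\lambda\psi$, $i\lambda\varphi$ by $\Psi+f_5$, $\Phi+f_3$ produces the key combination
\[
\Bigl(b\rho_1-\kappa\rho_2\Bigr)\,\kappa^{-1}\,(\psi_x,\Phi_x)\ \text{-type term, i.e. } \Bigl(\tfrac{\rho_1}{\kappa}-\tfrac{\rho_2}{b}\Bigr)\lambda^2(\ldots),
\]
together with boundary terms $b\psi_x\overline{(\varphi_x+\psi)}\big|_0^L$, which vanish because $\varphi_x+\psi$ vanishes at $0,L$. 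What remains is $\rho_2\|\Psi\|^2$, controlled by $\|\varphi_x+\psi\|$, $\|\Phi\|$, and $\|\theta\|$. These are small by Lemmas \ref{lemma-varphix}, \ref{lemma-Phi}, and \ref{lemma-theta}, but only after paying factors of $\lambda$ from expressions like $\lambda(\theta,\cdot)$.

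To kill those $\lambda$ factors, we rewrite $\lambda\theta$ and $\lambda(\varphi_x+\psi)$ through \eqref{eq2.9} and \eqref{eq2.11}: $i\lambda\tau q=\tau f_8-\sigma q-\theta_x$. This trades $\lambda$ against $\|q\|$, which is dissipated by Lemma \ref{lemma-v,q}. It is here that $|\lambda|\ge\sigma$ is used, to absorb $\sigma q$ into $\lambda\tau q$. In the equal-speed case the $\lambda^2$ mismatch term drops out, and the losses amount to $\lambda^2\|U\|\|F\|$. After Young's inequality this gives $\epsilon\|U\|^2+C_\epsilon\lambda^4\|F\|^2$.

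In the unequal-speed case the mismatch term $(\tfrac{\rho_1}{\kappa}-\tfrac{\rho_2}{b})\lambda^2(\Psi,\varphi_x+\psi)$-type survives. We bound it by $\lambda^2\|\Psi\|\,\|\varphi_x+\psi\|$ and, using \eqref{estimate-varphix-1} in its $\|U\|\|F\|$ form, get $\lambda^4\|U\|\|F\|$. Young's inequality then yields $\lambda^8\|F\|^2$.

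\textbf{Step 3: conclusion.} Combining Steps 1 and 2 with Lemmas \ref{lemma-v,q}–\ref{lemma-Phi} gives (i) and (ii).

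\textbf{Main obstacle.} The main obstacle is Step 2: tracking the powers of $\lambda$ in the cross terms $\lambda(\theta,\Psi)$ and $\lambda(q,\cdot)$. In particular, we must ensure the $\theta$–$\Psi$ coupling is controlled by $\|q\|$ via \eqref{eq2.11}, and not merely by $\|\theta\|$. Otherwise an uncontrolled $\lambda$ factor remains.
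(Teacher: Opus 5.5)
Your multipliers are the same as the paper's: \eqref{Resolvent equations}$_6$ tested with $\psi$ and with $\varphi_x+\psi$, \eqref{Resolvent equations}$_4$ tested with $\psi_x$, plus the kinematic relations. The paper assembles them into one identity \eqref{l3} for $\frac{b}{\rho_2}\|\psi_x\|^2$. You pass through $\|\Psi\|^2$ instead; your Step~1 is the identity of Lemma~\ref{lemma-Psi} read backwards, and that reorganization is harmless.

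The gap is the $\lambda$-counting in Step~2 for case (ii), exactly the point you flag. After the $\psi_{xx}$-terms cancel, what remains is $\bigl(\frac{b\rho_1}{\kappa}-\rho_2\bigr)(\Psi,\Phi_x)$, not a $(\psi_x,\Phi_x)$ term. Using $\Phi_x=i\lambda\varphi_x-f_{3x}$ and $i\lambda\psi=\Psi+f_5$, you get $(\Psi,\Phi_x)=-i\lambda(\Psi,\varphi_x+\psi)-\|\Psi\|^2$ plus terms bounded by $C\|U\|_{\mathcal{H}_1}\|F\|_{\mathcal{H}_1}$. So the mismatch term is first order in $\lambda$. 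Up to conjugation and a constant it is the term $i\lambda\bigl(\frac{\kappa\rho_2}{b\rho_1}-1\bigr)(\varphi_x+\psi,\Psi)$ of \eqref{l3}, not $\lambda^2(\Psi,\varphi_x+\psi)$.

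Separately, \eqref{estimate-varphix-1} does not give $\|\varphi_x+\psi\|^2\le C\|U\|_{\mathcal{H}_1}\|F\|_{\mathcal{H}_1}$. By \eqref{estimate-q}, its terms $\|q\|\|\Psi\|$ and $\|q\|\|\Phi\|$ are only $O(\|U\|_{\mathcal{H}_1}^{3/2}\|F\|_{\mathcal{H}_1}^{1/2})$.

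These two slips cancel in your final exponent, but the argument as written does not prove (ii). A genuine $\lambda^2\|\Psi\|\|\varphi_x+\psi\|$ combined with the true form of \eqref{estimate-varphix-1} gives $\lambda^2\|U\|^{7/4}\|F\|^{1/4}$, hence only $C_\epsilon\lambda^{16}\|F\|^2$. The correct chain is the paper's:
\begin{itemize}
\item $|\lambda|\|\varphi_x+\psi\|\|\Psi\|\le\frac{\epsilon}{2}\|\Psi\|^2+\frac{\lambda^2}{2\epsilon}\|\varphi_x+\psi\|^2$;
\item $\|\varphi_x+\psi\|^2\le C(\epsilon'\|U\|^2+\epsilon'^{-1}\|U\|\|F\|)$, by Young on the $\|q\|$-terms, with $\epsilon'=\epsilon^2/\lambda^2$;
\item together these give $C\epsilon\|U\|^2+C\epsilon^{-3}\lambda^4\|U\|\|F\|\le C\epsilon\|U\|^2+C_\epsilon\lambda^8\|F\|^2$.
\end{itemize}

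Your account of the thermal obstacle is also off, although the exponent in case (i) is right. No term $\lambda(\theta,\Psi)$ arises, and \eqref{eq2.9} is not needed. The temperature enters only through $(\theta_x,\psi_x)$ from the $\psi_x$-multiplier and through the harmless $(\theta,\psi)$ and $(\theta,\varphi_x+\psi)$, which Lemma~\ref{lemma-theta} controls. You bound $\|\theta_x\|$ directly from \eqref{eq2.11}: for $|\lambda|\ge\sigma$, $\|\theta_x\|^2\le2\tau^2\|f_8\|^2+2(\tau^2\lambda^2+\sigma^2)\|q\|^2\le C\|F\|^2+C\lambda^2\|U\|\|F\|$, as in \eqref{estimate-thetax}. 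This is the source of the $\lambda^4$. You must also keep the term $(\varphi-u,\psi_x)$ from the $\psi_x$-multiplier, controlled via Lemma~\ref{lemma-varphi-u}. With these corrections, case (i) goes through as you state.
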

  \begin{proof}
  	From equations \eqref{Resolvent equations}$_{3,5}$, it follows that
  	\begin{align}\label{eq2.12}
  		i \lambda \varphi_x - \Phi_x = f_{3x}, \quad i \lambda \psi - \Psi = f_5.
  	\end{align}
  	Add the two equations in \eqref{eq2.12}
  	\begin{align}\label{eq2.13}
  		i \lambda (\varphi_x + \psi) - (\Phi_x + \Psi) = f_{3 x} + f_5.
  	\end{align}
  	Taking the $L^2$-inner product of equation \eqref{eq2.13} with $\Psi$ and rearranging the terms, we derive
  	\begin{align}\label{eq2.14}
  		-i \lambda (\varphi_x + \psi, \Psi) - R_8 + R_9 = -(f_{3 x}, \Psi) - (f_5, \Psi).
  	\end{align}
  	where $R_8$ and $R_9$ are defined as:	$R_8 = (\Phi, \Psi_x), \quad R_9 = (\Psi, \Psi)$.
  	By equation \eqref{Resolvent equations}$_5$, substitute $\Psi_x$ and $\Psi$ into $R_8$ and $R_9$ to obtain
  	\begin{align*}
  		R_8 &= (\Phi, i \lambda \psi_x - f_{5 x}) = -i \lambda (\Phi, \psi_x) - (\Phi, f_{5 x}), \\
  		R_9 &= (\Psi, i \lambda \psi - f_5) = -i \lambda (\Psi, \psi) - (\Psi, f_5).
  	\end{align*}
  	Substitute $R_8$ and $R_9$ into equation \eqref{eq2.14}
  	\begin{align}\label{eq2.15}
  		-i \lambda (\varphi_x + \psi, \Psi) + i \lambda (\Phi, \psi_x)   -i \lambda (\Psi, \psi)  = -(f_{3 x}, \Psi) - (f_5, \Psi) - (\Phi, f_{5 x}) + (\Psi, f_5).
  	\end{align}
  	Taking the $L^2$-inner product of equation \eqref{Resolvent equations}$_4$ with $\frac{1}{\rho_1}\psi_x$ and the $L^2$-inner product of equation \eqref{Resolvent equations}$_6$ with $\frac{1}{\rho_2}\psi$, rearranging the terms, we derive
  	\begin{align}
  		i\lambda(\Phi,\psi_{x}) &= -\frac{\kappa}{\rho_{1}}(\varphi_{x} + \psi, \psi_{xx}) - \frac{\beta}{\rho_{1}}(\varphi - u, \psi_{x}) - \frac{m}{\rho_{1}}(\theta_{x}, \psi_{x}) + (f_{4}, \psi_{x}), \label{a1}\\
  		i\lambda(\Psi,\psi) &= -\frac{b}{\rho_{2}}\|\psi_{x}\|^{2} - \frac{\kappa}{\rho_{2}}(\varphi_{x} + \psi, \psi) + \frac{m}{\rho_{2}}(\theta, \psi) + (f_{6}, \psi). \label{a2}
  	\end{align}
  	Substitute the expressions for $i\lambda(\Phi,\psi_{x})$ and $i\lambda(\Psi,\psi)$ \eqref{a1} and
  	\eqref{a2} into equation \eqref{eq2.15}
  	\begin{align}\label{l1}
  		&-i \lambda (\varphi_x + \psi, \Psi) -\frac{\kappa}{\rho_{1}}(\varphi_{x} + \psi, \psi_{xx}) - \frac{\beta}{\rho_{1}}(\varphi - u, \psi_{x}) - \frac{m}{\rho_{1}}(\theta_{x}, \psi_{x}) +\frac{b}{\rho_{2}}\|\psi_{x}\|^{2} + \frac{\kappa}{\rho_{2}}(\varphi_{x} + \psi, \psi) - \frac{m}{\rho_{2}}(\theta, \psi)\notag\\
  		&= -(f_{3 x}, \Psi) - (f_5, \Psi) - (\Phi, f_{5 x}) + (\Psi, f_5) - (f_{4}, \psi_{x}) + (f_{6}, \psi).
  	\end{align}
  	Equation \eqref{Resolvent equations}$_6$ gives the following equation
  	\begin{align*}
  		-\frac{\kappa}{b \rho_1} ( \varphi_x + \psi, i \lambda \rho_2 \Psi - b \psi_{xx} + \kappa (\varphi_x + \psi) - m \theta ) = -\frac{\kappa}{b \rho_1} ( \varphi_x + \psi, \rho_2 f_6).
  	\end{align*}
  	Namely,
  	\begin{align}\label{l2}
  		i \lambda \frac{\kappa \rho_2}{b \rho_1} ( \varphi_x + \psi, \Psi ) + \frac{\kappa}{\rho_1} ( \varphi_x + \psi, \psi_{xx}) - \frac{\kappa ^2}{b \rho_1} \| \varphi_x + \psi \|^2
  		+ \frac{\kappa m}{b \rho_1} ( \varphi_x + \psi, \theta ) = -\frac{\kappa \rho_2}{b \rho_1} ( \varphi_x + \psi, f_6).
  	\end{align}
  	Summing equation \eqref{l1} and equation \eqref{l2}, we have
  	\begin{align}\label{l3}
  		&i \lambda (\frac{\kappa \rho_2}{b \rho_1} - 1) (\varphi_x + \psi, \Psi)  - \frac{\beta}{\rho_{1}}(\varphi - u, \psi_{x}) - \frac{m}{\rho_{1}}(\theta_{x}, \psi_{x}) +\frac{b}{\rho_{2}}\|\psi_{x}\|^{2} + \frac{\kappa}{\rho_{2}}(\varphi_{x} + \psi, \psi) - \frac{m}{\rho_{2}}(\theta, \psi) - \frac{\kappa ^2}{b \rho_1} \| \varphi_x + \psi \|^2 \notag\\
  		& + \frac{\kappa m}{b \rho_1} ( \varphi_x + \psi, \theta )
  		= -(f_{3 x}, \Psi) - (f_5, \Psi) - (\Phi, f_{5 x}) + (\Psi, f_5) - (f_{4}, \psi_{x}) + (f_{6}, \psi) -\frac{\kappa \rho_2}{b \rho_1} ( \varphi_x + \psi, f_6).
  	\end{align}
  	\paragraph{Case 1:}If $\frac{\rho_1}{\kappa}=\frac{\rho_2}{b}$, i.e., $\frac{\kappa \rho_2}{b \rho_1} = 1$.
  	From equation \eqref{l3}, it follows that
  	\begin{align*}
  		\|\psi_x\|^2 \leq C \left(\|\varphi-u\|^2 + \|\theta_x\|^2 + \|\varphi_x+\psi\|^2 + \|U\|_{\mathcal{H}_1} \|F\|_{\mathcal{H}_1}  \right).
  	\end{align*}
  	Now, we estimate $\|\theta_x\|^2$. By equation \eqref{Resolvent equations}$_8$, we derive
  	\begin{align}\label{estimate-thetax}
  		\|\theta_x\|^2 &= \|\tau f_8 - (i \lambda \tau + \sigma) q\|^2 \leq 2 \tau^2 \|f_8\|^2 + C \lambda ^2 \|q\|^2 \leq 2 \tau^2 \|F\|_{\mathcal{H}_1}^2 + C \lambda^2 \|U\|_{\mathcal{H}_1} \|F\|_{\mathcal{H}_1} \notag\\
  		&\leq 2 \tau^2 \|F\|_{\mathcal{H}_1}^2 + C \lambda ^2 \left( \frac{\epsilon}{\lambda ^2} \|U\|_{\mathcal{H}_1}^2 + C_{\epsilon} \lambda^2 \|F\|_{\mathcal{H}_1}^2 \right) \leq \epsilon \|U\|_{\mathcal{H}_1}^2 + C_{\epsilon} \lambda^4 \|F\|_{\mathcal{H}_1}^2,
  	\end{align}
  	for $|\lambda| \geq \sigma$.
  	From estimates \eqref{estimate-thetax}, \eqref{estimate-varphi-u-2} and \eqref{estimate-varphix-2}, it follows that
  	\begin{align}
  		\|\psi_x\|^2 \leq \epsilon \|U\|_{\mathcal{H}_1}^2 + C_{\epsilon} \lambda^4 \|F\|_{\mathcal{H}_1}^2,
  	\end{align}
  	for $|\lambda| \geq \max\{1, \sigma\}$.
  	\paragraph{Case 2:}If $\frac{\rho_1}{\kappa}\neq\frac{\rho_2}{b}$, i.e., $\frac{\kappa \rho_2}{b \rho_1} \neq 1$.
  	From equation \eqref{l3}, it follows that
  	\begin{align*}
  		\|\psi_x\|^2 \leq C \left(\lambda\|\varphi_x + \psi\|\|\Psi\| + \|\varphi-u\|^2 + \|\theta_x\|^2 + \|\varphi_x+\psi\|^2 + \|U\|_{\mathcal{H}_1} \|F\|_{\mathcal{H}_1}  \right).
  	\end{align*}
  	By Young's inequality, estimate \eqref{estimate-varphix-1} yields
  	\begin{align*}
  		\|\varphi_x + \psi\|^2 \leq C\left(\epsilon \|U\|_{\mathcal{H}_1}^2 + \frac{1}{\epsilon}\|U\|_{\mathcal{H}_1}\|F\|_{\mathcal{H}_1}\right).
  	\end{align*}
  	Consequently,
  	\begin{align}\label{estimate1}
  		\lambda\|\varphi_x + \psi\|\|\Psi\| &\leq \frac{\epsilon}{2}\|\Psi\|^2 + \frac{\lambda ^2}{2\epsilon} \|\varphi_x + \psi\|^2
  		\leq \epsilon \|U\|_{\mathcal{H}_1}^2 + \frac{C \lambda ^2}{2\epsilon}\left(\frac{\epsilon^2}{\lambda^2}\|U\|_{\mathcal{H}_1}^2 + \frac{\lambda^2}{\epsilon^2}\|U\|_{\mathcal{H}_1}\|F\|_{\mathcal{H}_1}\right) \notag\\
  		&\leq \epsilon \|U\|_{\mathcal{H}_1}^2 + \frac{C \lambda ^4}{\epsilon^3}\|U\|_{\mathcal{H}_1}\|F\|_{\mathcal{H}_1}
  		\leq \epsilon \|U\|_{\mathcal{H}_1}^2 + \frac{C \lambda ^4}{\epsilon^3}\left(\frac{\epsilon^4}{\lambda^4}\|U\|_{\mathcal{H}_1}^2 + \frac{\lambda^4}{\epsilon^4}\|F\|_{\mathcal{H}_1}^2\right) \notag\\
  		&\leq \epsilon \|U\|_{\mathcal{H}_1}^2 + \frac{C \lambda^8}{\epsilon^7}\|F\|_{\mathcal{H}_1}^2
  		\leq \epsilon \|U\|_{\mathcal{H}_1}^2 + C_{\epsilon}\lambda^8 \|F\|_{\mathcal{H}_1}^2.
  	\end{align}
  	From estimates \eqref{estimate1}, \eqref{estimate-thetax}, \eqref{estimate-varphi-u-2} and \eqref{estimate-varphix-2}, it follows that
  	\begin{align*}
  		\|\psi_x\|^2 \leq \epsilon \|U\|_{\mathcal{H}_1}^2 + C_{\epsilon} \lambda^8 \|F\|_{\mathcal{H}_1}^2,
  	\end{align*}
  	for $|\lambda| \geq \max\{1, \sigma\}$.
  \end{proof}

  \begin{lemma}\label{lemma-Psi}
  	For $|\lambda| \geq \max\{1, \sigma\}$, the following inequalities hold
  	\begin{enumerate}[(i)]
  		\item if $\frac{\rho_1}{\kappa}=\frac{\rho_2}{b}$, for any $\epsilon>0$, there exists a constant $C_{\epsilon}>0$ (independent of $\lambda$) such that
  		\begin{align}
  			\|\Psi\|^2 \leq \epsilon\|U\|_{\mathcal{H}_1}^2 + C_{\epsilon} \lambda ^4 \|F\|_{\mathcal{H}_1}^2;
  		\end{align}
  		\item if $\frac{\rho_1}{\kappa} \neq \frac{\rho_2}{b}$, for any $\epsilon>0$, there exists a constant $C_{\epsilon}>0$ (independent of $\lambda$) such that
  		\begin{align}
  			\|\Psi\|^2 \leq \epsilon\|U\|_{\mathcal{H}_1}^2 + C_{\epsilon}\lambda ^8\|F\|_{\mathcal{H}_1}^2.
  		\end{align}
  	\end{enumerate}
  \end{lemma}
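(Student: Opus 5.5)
The plan is to use the $\psi$-equation \eqref{Resolvent equations}$_6$ tested against $\psi$, in the same way Lemma \ref{lemma-Phi} used \eqref{Resolvent equations}$_4$ tested against $\varphi$. Taking the $L^2$-inner product of \eqref{Resolvent equations}$_6$ with $\psi$ and integrating by parts (the boundary terms vanish because $\psi\in H_0^1$) gives
\begin{align*}
i\lambda\rho_2(\Psi,\psi) + b\|\psi_x\|^2 + \kappa(\varphi_x+\psi,\psi) - m(\theta,\psi) = \rho_2(f_6,\psi).
\end{align*}
Using \eqref{Resolvent equations}$_5$ in the first term, we have $i\lambda\rho_2(\Psi,\psi) = -\rho_2(\Psi, i\lambda\psi) = -\rho_2\|\Psi\|^2 - \rho_2(\Psi,f_5)$. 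This yields the identity
\begin{align*}
\rho_2\|\Psi\|^2 = b\|\psi_x\|^2 + \kappa(\varphi_x+\psi,\psi) - m(\theta,\psi) - \rho_2(f_6,\psi) - \rho_2(\Psi,f_5).
\end{align*}

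Next, each term on the right is bounded by H\"older's inequality and Poincar\'e's inequality ($\|\psi\|\le \sqrt{c_p}\|\psi_x\|\le C\|U\|_{\mathcal{H}_1}$). This gives
\begin{align*}
\|\Psi\|^2 \le C\left(\|\psi_x\|^2 + \|\varphi_x+\psi\|\,\|\psi_x\| + \|\theta\|\,\|\psi_x\| + \|U\|_{\mathcal{H}_1}\|F\|_{\mathcal{H}_1}\right).
\end{align*}
The cross terms are handled by $\|\varphi_x+\psi\|\|\psi_x\|\le \|\varphi_x+\psi\|^2+\|\psi_x\|^2$, and similarly for $\theta$. Then we insert the estimates already obtained: \eqref{estimate-varphix-2} for $\|\varphi_x+\psi\|^2$, \eqref{estimate-theta-2} for $\|\theta\|^2$, and Lemma \ref{lemma-psix} for $\|\psi_x\|^2$. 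The last contributes $\epsilon\|U\|_{\mathcal{H}_1}^2 + C_\epsilon\lambda^4\|F\|_{\mathcal{H}_1}^2$ in case (i) and $\epsilon\|U\|_{\mathcal{H}_1}^2 + C_\epsilon\lambda^8\|F\|_{\mathcal{H}_1}^2$ in case (ii). The leftover $\|U\|_{\mathcal{H}_1}\|F\|_{\mathcal{H}_1}$ is split by Young's inequality as $\epsilon\|U\|_{\mathcal{H}_1}^2 + C_\epsilon\|F\|_{\mathcal{H}_1}^2$. Since $|\lambda|\ge 1$, we have $C_\epsilon\|F\|_{\mathcal{H}_1}^2\le C_\epsilon\lambda^4\|F\|_{\mathcal{H}_1}^2\le C_\epsilon\lambda^8\|F\|_{\mathcal{H}_1}^2$, so all lower-order $F$ terms are absorbed into the dominant power in each case. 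Finally, we rename $\epsilon$ (replace $\epsilon$ by $\epsilon/C$) to obtain exactly the two stated inequalities for $|\lambda|\ge\max\{1,\sigma\}$.

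There is no genuine obstacle here. The step needing care is bookkeeping. All constants other than $C_\epsilon$ must be independent of $\lambda$. The $\epsilon$-dependence must be propagated consistently: for example, the bound $\|\varphi_x+\psi\|\|\psi_x\|$ should be taken via the already-$\epsilon$-small quantities rather than producing a $\lambda$-dependent factor. The power of $\lambda$ multiplying $\|F\|^2$ must not exceed $\lambda^4$ in case (i) and $\lambda^8$ in case (ii); this holds because the only $\lambda$-growth enters through Lemma \ref{lemma-psix}.
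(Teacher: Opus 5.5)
Your proposal is correct and is essentially the paper's own argument: test \eqref{Resolvent equations}$_6$ against $\psi$, replace $i\lambda\psi$ by $\Psi+f_5$ to isolate $\rho_2\|\Psi\|^2$, bound the remaining terms by H\"older and Poincar\'e to get $\|\Psi\|^2\le C(\|\psi_x\|^2+\|\varphi_x+\psi\|^2+\|\theta\|^2+\|U\|_{\mathcal{H}_1}\|F\|_{\mathcal{H}_1})$, and then invoke Lemmas \ref{lemma-theta}, \ref{lemma-varphix} and \ref{lemma-psix}. Your extra bookkeeping (absorbing lower powers of $\lambda$ via $|\lambda|\ge 1$ and rescaling $\epsilon$) only makes explicit what the paper leaves implicit.
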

  \begin{proof}
  	Take the $L^2$-inner product of equation \eqref{Resolvent equations}$_6$ with $\psi$, and use equation \eqref{Resolvent equations}$_5$
  	\begin{align*}
  		-\rho_2\|\Psi\|^2 - \rho_2(\Psi,f_5) + b\|\psi_x\|^2 + \kappa(\varphi_x+\psi,\psi) - m(\theta,\psi) = \rho_2(f_6,\psi).
  	\end{align*}
  	Therefore, by H\"older's inequality,
  	\begin{align*}
  		\|\Psi\|^2 \leq C(\|\psi_x\|^2+\|\varphi_x+\psi\|^2+\|\theta\|^2+\|U\|_{\mathcal{H}_1}\|F\|_{\mathcal{H}_1}).
  	\end{align*}
  	Consequently, Lemma \ref{lemma-Psi} is a direct consequence of Lemmas \ref{lemma-theta}, \ref{lemma-varphix} and \ref{lemma-psix}.
  \end{proof}
  \begin{proof}[Proof of Theorem \ref{Polynomial stability}]
  From Lemmas \ref{lemma-v,q}-\ref{lemma-Psi}, for $|\lambda| \geq \max\{1, \sigma\}$, the following estimates hold
  \paragraph{Case 1:} if $\frac{\rho_1}{\kappa}=\frac{\rho_2}{b}$, there exists a constant $C> 0$ (independent of $\lambda$) such that $\|U\|_{\mathcal{H}_1} \leq C \lambda^2\|F\|_{\mathcal{H}_1}$;
  \paragraph{Case 2:} if $\frac{\rho_1}{\kappa} \neq \frac{\rho_2}{b}$, there exists a constant $C> 0$ (independent of $\lambda$) such that $\|U\|_{\mathcal{H}_1} \leq C \lambda^4\|F\|_{\mathcal{H}_1}$.\\
  Combined with Lemma \ref{lemma-iR}, by Theorem
  \ref{Stability Theorem}, we conclude that Theorem \ref{Polynomial stability} holds.
  \end{proof}

  \subsection{Non-exponential Stability}\label{sec2.3}
  In this section, we demonstrate that the system fails to be exponentially stable under certain coefficient conditions and give the proof of Theorem \ref{Non-exponential Stability}.
  \begin{proof}[Proof of Theorem \ref{Non-exponential Stability}]
  We employ the strategy of demonstrating that the operators $(i\lambda I-\mathcal{A}_1)^{-1}$ are not uniformly bounded. Specifically, we will show that there exists a sequence of real numbers $\lambda_n$ with $|\lambda_n| \to \infty$, and a sequence of functions $F_n \in \mathcal{H}_1$ such that \{$F_n$\} is bounded in $\mathcal{H}_1$, satisfying the equation $(i\lambda_n I-\mathcal{A}_1)U_n = F_n$, where $U_n \in \mathcal{D}(\mathcal{A}_1)$ and $\|U_n\|_{\mathcal{H}_1} \to \infty$, $n \in \mathbb{N}$.
  	
  	We define $F_n=(0,0,0,0,0,\rho_2 ^{-1}sin\omega \lambda_n x,0,0) \in \mathcal{H}_1$, $\lambda_n = \frac{n\pi}{\omega L}$, where $\omega=\sqrt{\frac{\rho_2}{b}}$, $n \in \mathbb{N}$. For notational simplicity, we will omit the subscript $n$ in what follows. From the definition of $\mathcal{A}_1$, $(i\lambda_n I-\mathcal{A}_1)U_n = F_n$ is equivalent to the following equations
  	\begin{align}\label{j1}
  		\begin{cases}
  			i\lambda u - v=0,\\
  			i\lambda \rho v -\alpha u_{xx}-\beta(\varphi - u)+ \gamma v =0,\\
  			i\lambda\varphi - \Phi =0,\\
  			i\lambda\rho_1 \Phi - \kappa(\varphi_{x}+\psi)_x + \beta (\varphi - u) +  m\theta_{x}=0  ,\\
  			i\lambda \psi - \Psi =0 ,\\
  			i\lambda \rho_2 \Psi - b \psi_{xx} + \kappa(\varphi_{x} + \psi) - m\theta =sin\omega\lambda x ,\\
  			i\lambda \rho_3 \theta + q_x + m(\Phi_{x} + \Psi) =0 ,\\
  			i\lambda \tau q + \sigma q + \theta_x = 0 .
  		\end{cases}
  	\end{align}
  	Equations \eqref{j1}$_{1,3,5}$ yield
  	\begin{align}\label{j2}
  		 v = i \lambda u, \quad \Phi = i \lambda \varphi, \quad \Psi = i \lambda \psi .
  	\end{align}
  	Substitute equations \eqref{j2} into equations \eqref{j1}$_{2,4,6,7,8}$
  	\begin{align}\label{j3}
  		\begin{cases}
  			& (i \lambda \gamma - \lambda^2 \rho + \beta) u - \alpha u_{xx} - \beta \varphi = 0, \\
  			&(\beta - \lambda^2 \rho_1) \varphi - \kappa \varphi_{xx} - \kappa \psi_x - \beta u + m \theta_x = 0, \\
  			&(\kappa - \lambda^2 \rho_2) \psi - b \psi_{xx} + \kappa \varphi_x - m \theta = \sin \omega \lambda x,\\
  			& i \lambda \rho_3 \theta + q_x + i\lambda m \varphi_x + i\lambda m\psi = 0, \\
  			& i\lambda \tau q + \sigma q + \theta_x = 0.
  		\end{cases}
  	\end{align}
  	Given that $U \in \mathcal{D}(\mathcal{A}_1)$, we assume the solution of the equations \eqref{j3} takes the following form
  	\begin{align}\label{j4}
  		u=Acos\omega\lambda x, \varphi=Bcos\omega\lambda x, \psi=Csin\omega\lambda x, \theta=Dsin\omega\lambda x, q=Ecos\omega\lambda x.
  	\end{align}
  	Substituting \eqref{j4} into equations \eqref{j3}, we derive
  	\begin{align}\label{j5}
  		\begin{cases}
  			(i \lambda \gamma - \rho \lambda^2 + \beta + \alpha \omega^2 \lambda^2) A - \beta B = 0, \\
  			-\beta A + (\beta - \rho_1 \lambda^2 + \kappa \omega^2 \lambda^2) B - \kappa \omega \lambda C + m \omega \lambda D = 0, \\
  			-\kappa \omega \lambda B + (\kappa - \rho_2 \lambda^2 + b \omega^2 \lambda^2) C - m D = 1, \\
  			-im \omega \lambda B + im C + i \rho_3 D - \omega E = 0, \\
  			\omega \lambda D + (i \lambda \tau + \sigma) E = 0.
  		\end{cases}
  	\end{align}
  	From equations \eqref{j5}$_{1,5}$, we obtain
  	\begin{align}\label{j6}
  		A=\frac{\beta}{i \lambda \gamma - \rho \lambda^2 + \beta + \alpha \omega^2 \lambda^2}B, \quad E=-\frac{\omega \lambda}{i \lambda \tau + \sigma}D.
  	\end{align}
  	Substituting \eqref{j6} into equations \eqref{j5}$_{2,3,4}$, it follows that
  	\begin{align}
  		\underbrace{
  			\left[ \begin{array}{ccc}
  				q_{1}(\lambda) & -\kappa\omega\lambda & m\omega\lambda \\
  				-\kappa\omega\lambda & q_{2}(\lambda) & -m \\
  				-im\omega\lambda & im & q_{3}(\lambda)
  			\end{array} \right]
  		}_{:=M}
  		\left[ \begin{array}{c}
  			B \\
  			C \\
  			D
  		\end{array} \right]
  		=
  		\left[ \begin{array}{c}
  			0 \\
  			1 \\
  			0
  		\end{array} \right],
  	\end{align}
  	where $q_1(\lambda)$, $q_2(\lambda)$, $q_3(\lambda)$ are defined as
  	\begin{align}
  		\begin{cases}
  			q_1(\lambda) = -\dfrac{\beta^2}{i \lambda \gamma - \rho \lambda^2 + \beta + \alpha \omega^2 \lambda^2} + \beta - \rho_1 \lambda^2 + \kappa \omega^2 \lambda^2 = -\dfrac{\beta^2}{i \lambda \gamma - \rho \lambda^2 + \beta + \alpha \omega^2 \lambda^2} + \beta + \kappa\left(\dfrac{\rho_2}{b}-\dfrac{\rho_1}{\kappa}\right) \lambda^2,\\
  			q_2(\lambda) = \kappa - \rho_2 \lambda^2 + b \omega^2 \lambda^2 = \kappa,\\
  			q_3(\lambda) = i \rho_3 + \dfrac{\omega^2 \lambda}{i \lambda \tau + \sigma} = i\left(\rho_3 - \dfrac{\tau \omega ^2 \lambda^2}{\sigma^2 + \tau^2 \lambda^2}\right) + \dfrac{\sigma \omega^2 \lambda}{\sigma^2 + \tau^2 \lambda^2}.
  		\end{cases}
  	\end{align}
  	Therefore, the determinant of matrix $M$ is
  	\begin{align*}
  		\det M &= -im\omega\lambda \left( \kappa\omega m\lambda - \kappa\omega m\lambda \right) - im(-mq_1(\lambda)+\kappa m\omega^2\lambda^2)  + q_3(\lambda)(q_1(\lambda) q_2(\lambda) - \kappa^2 \omega^2\lambda^2) \\
  		&= - im(-mq_1(\lambda)+\kappa m\omega^2\lambda^2)  + q_3(\lambda)(q_1(\lambda) q_2(\lambda) - \kappa^2 \omega^2\lambda^2) \\
  		&= -im\left( \frac{m\beta^2}{i\lambda \gamma-\rho\lambda^2+\beta+\alpha\omega^2\lambda^2}  - m\beta + m \rho_1 \lambda^2\right) \\
  		&\quad + \left(i\left(\rho_3 - \dfrac{\tau \omega ^2 \lambda^2}{\sigma^2 + \tau^2 \lambda^2}\right) + \dfrac{\sigma \omega^2 \lambda}{\sigma^2 + \tau^2 \lambda^2}\right)\left( -\frac{\kappa\beta^2}{i\lambda \gamma-\rho\lambda^2+\beta+\alpha \omega^2\lambda^2}+\kappa\beta-\kappa\rho_1 \lambda^2\right).
  	\end{align*}
  	Considering the real part of $\det M$, it can be concluded that $\det M \neq 0$.
  	Thus, $C$ can be solved as
  	\begin{align}\label{C}
  		C = \dfrac{q_1(\lambda)q_3(\lambda) + im^2 \omega^2 \lambda^2}{\det M}.
  	\end{align}
  	\paragraph{Case 1:}If $\chi_0 \neq 0$, and $\chi_1 \neq 0$, as $n \to \infty$, we obtain
  	\begin{align*}
  		q_1(\lambda)q_3(\lambda) + im^2\omega^2\lambda^2 &\sim \kappa\left(\frac{\rho_2}{b}-\frac{\rho_1}{\kappa}\right)\lambda^2 \cdot i\left(\rho_3-\frac{\omega^2}{\tau}\right) + im^2\omega^2\lambda^2 \\
  		&= i\left[\kappa\left(\frac{\rho_2}{b}-\frac{\rho_1}{\kappa}\right)\left(\rho_3-\frac{\omega^2}{\tau}\right) + m^2\omega^2\right]\lambda^2 \\
  		&= i\chi_0\lambda^2,
  	\end{align*}
  	where $\chi_0 = \kappa\left(\dfrac{\rho_2}{b} -  \dfrac{\rho_1}{\kappa}\right)\left(\rho_3 - \dfrac{\rho_2}{b\tau}\right) + \dfrac{m^2 \rho_2}{b}$.
  	\begin{align*}
  		\det M &\sim -im^2 \rho_1 \lambda^2 + i \left(\rho_3 - \frac{\omega^2}{\tau}\right)(-\kappa \rho_1 \lambda^2) \\
  		&= i \rho_1 \left(-m^2 - \kappa \rho_3 + \frac{\kappa \omega^2}{\tau}\right) \lambda^2\\
  		&=i \rho_1 \chi_1 \lambda^2.
  	\end{align*}
  	where $\chi_1 = -m^2 - \kappa\rho_3 + \dfrac{\kappa\rho_2}{\tau b}$.
  	From \eqref{C}, it follows that
  	\begin{align*}
  		|C| \approx \left|\dfrac{\chi_0}{\rho_1 \chi_1}\right| > 0.
  	\end{align*}
  	Since $\Psi = i \lambda \psi = i \lambda C sin\omega \lambda x$, we derive
  	\begin{align*}
  		\|\Psi\|^2 = \lambda^2 C^2 \int_{0}^{L} sin^2\omega\lambda x dx = \lambda^2 C^2 \int_{0}^{L} sin^2\frac{n\pi}{L}x dx = \dfrac{LC^2\lambda^2}{2} \to \infty,
  	\end{align*}
  	which implies $\|U_n\|_{\mathcal{H}_1} \to \infty$.
  	\paragraph{Case 2:}If $\chi_0 \neq 0$, and $\chi_1 = 0$, as $n \to \infty$, we obtain
  	\begin{align*}
  		\det M \sim \dfrac{\sigma \omega^2}{\tau^2} \dfrac{1}{\lambda}(-\kappa \rho_1 \lambda^2) = -\dfrac{\kappa \rho_1 \sigma \omega^2}{\tau^2} \lambda.
  	\end{align*}
  	From \eqref{C}, it follows that
  	\begin{align*}
  		|C| \approx \left|\dfrac{\tau^2 \chi_0}{\kappa \rho_1 \sigma \omega^2}\lambda\right| \to \infty,
  	\end{align*}
  	which implies that
  	\begin{align*}
  		\|\Psi\|^2 = \lambda^2 C^2 \int_{0}^{L} sin^2\omega\lambda x dx = \lambda^2 C^2 \int_{0}^{L} sin^2\frac{n\pi}{L}x dx = \dfrac{LC^2\lambda^2}{2} \to \infty,
  	\end{align*}
  	which implies $\|U_n\|_{\mathcal{H}_1} \to \infty$.
  	\paragraph{Case 3:}If $\chi_0 = 0$, and $\chi_1 = 0$, (since $\chi_0 = \kappa\left(\dfrac{\rho_2}{b} -  \dfrac{\rho_1}{\kappa}\right)\left(\rho_3 - \dfrac{\rho_2}{b\tau}\right) + \dfrac{m^2 \rho_2}{b} = 0$, we get $\dfrac{\rho_2}{b} - \dfrac{\rho_1}{\kappa} \neq 0$),\\as $n \to \infty$, we obtain
  	\begin{align*}
	  		q_1(\lambda)q_3(\lambda) + im^2\omega^2\lambda^2 &\sim \kappa(\dfrac{\rho_2}{b} - \dfrac{\rho_1}{\kappa}) \lambda^2 \dfrac{\sigma\omega^2}{\tau^2} \dfrac{1}{\lambda} = \dfrac{\kappa\sigma\omega^2(\frac{\rho_2}{b} - \frac{\rho_1}{\kappa})}{\tau^2} \lambda \neq 0.
  	\end{align*}
  	From \eqref{C}, it follows that
  	\begin{align*}
  		|C| \approx \dfrac{1}{\rho_1}\left|\dfrac{\rho_2}{b} - \dfrac{\rho_1}{\kappa}\right| > 0,
  	\end{align*}
  	which implies $\|\Psi\|^2 \to \infty$. Therefore, $\|U_n\|_{\mathcal{H}_1} \to \infty$.
  	
  	The proof of Theorem \ref{Non-exponential Stability} is completed.
  \end{proof}
\section{Model Formulation with the Boltzmann Memory Term ($\delta = 1$)} \label{sec3}
When $\delta = 1$, the suspension bridge model \eqref{model1.1} incorporates the Boltzmann memory term, taking the following form

\begin{align}\label{eq:with memory}
	\begin{cases}
		\rho u_{tt} - \alpha u_{xx} - \beta(\varphi - u) + \gamma u_t = 0 & \text{in } (0,L) \times \mathbb{R}^+, \\
		\rho_1 \varphi_{tt} - \kappa(\varphi_x + \psi)_x + \beta (\varphi - u) + m \theta_x = 0 & \text{in } (0,L) \times \mathbb{R}^+, \\
		\rho_2 \psi_{tt} - b \psi_{xx} + \kappa(\varphi_x + \psi) + \int_0^\infty g(s) \psi_{xx}(t-s)  ds - m \theta = 0 & \text{in } (0,L) \times \mathbb{R}^+, \\
		\rho_3 \theta_t + q_x + m (\varphi_x + \psi)_t = 0 & \text{in } (0,L) \times \mathbb{R}^+, \\
		\tau q_t + \sigma q + \theta_x = 0& \text{in } (0,L) \times \mathbb{R}^+.
	\end{cases}
\end{align}

For the memory kernel $g(s)$ in the system, we impose the following standard assumptions, which are conventional in viscoelasticity research

\begin{assumption}\label{assumg}
 The memory kernel $g \in L^1(\mathbb{R}^+) \cap C^1(\mathbb{R}^+)$ is a nonnegative function satisfying
 \begin{equation}\label{g}
		\dt b := b - b_0 > 0 \quad \text{with } b_0 = \int_{0}^{\infty} g(s) ds, \quad \text{and } g'(s) \leq -\kappa_1 g(s) \quad \text{for all } s\in\mathbb{R}^+
	\end{equation}
	where $\kappa_1 > 0$ is a given decay rate.
\end{assumption}

Define the relative history function
\begin{equation*}
	\eta^t(x,s) := \psi(x,t) - \psi(x,t-s) \quad \text{for } t,s \geq 0.
\end{equation*}
From the relevant history function, the original system \eqref{eq:with memory} of equations is transformed into the following system
\begin{align}
	\begin{cases}
		\rho u_{tt} - \alpha u_{xx} - \beta(\varphi - u) + \gamma u_t = 0 & \text{in } (0,L) \times \mathbb{R}^+, \\
		\rho_1 \varphi_{tt} - \kappa(\varphi_x + \psi)_x + \beta (\varphi - u) + m \theta_x = 0 & \text{in } (0,L) \times \mathbb{R}^+, \\
		\rho_2 \psi_{tt} - \dt b \psi_{xx} + \kappa(\varphi_x + \psi) - \int_0^\infty g(s) \eta_{xx}(s)  ds - m \theta = 0 & \text{in } (0,L) \times \mathbb{R}^+, \\
		\rho_3 \theta_t + q_x + m (\varphi_x + \psi)_t = 0 & \text{in } (0,L) \times \mathbb{R}^+, \\
		\tau q_t + \sigma q + \theta_x = 0& \text{in } (0,L) \times \mathbb{R}^+,\\
		\eta_t + \eta_s - \psi_t = 0& \text{in } (0,L) \times \mathbb{R}^+,
	\end{cases}
\end{align}
for $s>0$, where the boundary satisfies the following conditions
\begin{align}\label{with memory:boundary condition}
	\begin{cases}
		u_x(0,t) = u_x(L,t) = \varphi_x(0,t) = \varphi_x(L,t) = \psi(0,t) = \psi(L,t) = \theta(0,t) = \theta(L,t) = 0, \\
		\eta^t(0,s) = \eta^t(L,s) = 0, \quad \eta^t(\cdot,0) = 0,
	\end{cases}
\end{align}
for $s, t \in \mathbb{R}^+$,
and the initial values are defined as follows
\begin{equation}\label{with memory:initial condition}
	\begin{cases}
		u(x,0) = u_0(x), \quad u_t(x,0) = v_0(x), \\
		\varphi(x,0) = \varphi_0(x), \quad \varphi_t(x,0) = \Phi_0(x), \\
		\psi(x,0) = \psi_0(x), \quad \psi_t(x,0) = \Psi_0(x), \\
		\theta(x,0) = \theta_0(x), \quad q(x,0)=q_0(x), \\
		\eta^0(x,s) = \eta_0(x,s),
	\end{cases}
\end{equation}
for $x \in (0, L)$, $s \in \mathbb{R}^+$.

We introduce the following image space
\[
\mathcal{H}_2 := H_*^1 \times L_*^2 \times H_*^1 \times L_*^2 \times H_0^1 \times L^2 \times L^2 \times L^2 \times L_g^2(\mathbb{R}^+; H_0^1),
\]
which forms a Hilbert space when endowed with the inner product
\begin{align*}
	(U_1, U_2)_{\mathcal{H}_2} = &\rho(v_1, v_2) + \alpha (u_{1x}, u_{2x}) + \rho_1(\Phi_1, \Phi_2) + \rho_2(\Psi_1, \Psi_2) + \dt b (\psi_{1x},\psi_{2x}) \\
	&+ \beta (\varphi_1 - u_1, \varphi_2 - u_2) + \kappa(\varphi_{1x} + \psi_1, \varphi_{2x} + \psi_2) + \rho_3(\theta_1, \theta_2) + \tau(q_1, q_2) + (\eta_1, \eta_2)_{L_g^2(\mathbb{R}^+; H_0^1)},
\end{align*}
and the norm
\[
\|U\|_{\mathcal{H}_2}^2 := \rho\|v\|^2 + \alpha\|u_x\|^2 + \rho_1\|\Phi\|^2 + \rho_2\|\Psi\|^2 + \dt b\|\psi_x\|^2 + \beta\|\varphi - u\|^2 + \kappa\|\varphi_x + \psi\|^2 + \rho_3\|\theta\|^2 + \tau \|q\| ^2 + \|\eta\|_{L_g^2(\mathbb{R}^+; H_0^1)} ^2,
\]
where $U := (u, v, \varphi, \Phi, \psi, \Psi, \theta, q, \eta) \in \mathcal{H}_2$ and $U_i := (u_i, v_i, \varphi_i, \Phi_i, \psi_i, \Psi_i, \theta_i, q_i, \eta_i) \in \mathcal{H}_2$ for $i = 1, 2$.
Thus, the system can be transformed into the following Cauchy problem
\begin{equation}\label{model-abstract-2}
	\begin{cases}
		\dfrac{d}{dt}U = \mathcal{A}_2U, & t > 0, \\
		U(0) =  U_0 :=(u_0, v_0, \varphi_0, \Phi_0, \psi_0, \Psi_0, \theta_0, q_0, \eta_0),
	\end{cases}
\end{equation}
where$\mathcal{A}_2: \mathcal{D}(\mathcal{A}_2) \subset \mathcal{H}_2 \to \mathcal{H}_2$ is a linear operator defined as  \begin{equation}\label{definition:A-2}
	\mathcal{A}_2 U := \begin{pmatrix}
		v \\
		\rho^{-1} \left[ \alpha u_{xx} + \beta (\varphi - u) - \gamma v \right] \\
		\Phi \\
		\rho_1^{-1} \left[ \kappa (\varphi_x + \psi)_x - \beta (\varphi - u) - m \theta_x \right] \\
		\Psi \\
		\rho_2^{-1} \left[ \dt b \psi_{xx} - \kappa (\varphi_x + \psi) + \int_0^\infty g(s) \eta_{xx}(s) ds + m\theta \right] \\
		\rho_3^{-1} \left[ -q_x - m (\Phi_x + \Psi) \right] \\
		\tau^{-1}(-\sigma q - \theta_x) \\
		\Psi - \eta_s
	\end{pmatrix}.
\end{equation}
The domain of the operator $\mathcal{A}_2$ is defined as follows
\begin{align*}
	\mathcal{D}(\mathcal{A}_2) := \bigg\{ U \in \mathcal{H}_2 :&
	v, \Phi \in H_*^1;  u_x, \varphi_x, \theta, \Psi \in H_0^1; \\
	&\dt b \psi + \int_0^\infty g(s)\eta(s)ds \in H^2; \\
	&q \in H^1;
	\eta_s \in L_g^2(\mathbb{R}^+; H_0^1) ; \eta(\cdot, 0) = 0
	\bigg\}.
\end{align*}
The main results regarding well-posedness and exponential stability are presented as follows:
\begin{theorem}[Well-posedness]\label{Well-Posedness-2}
	Let \(\mathcal{A}_2\) be the linear operator defined in \eqref{definition:A-2} with domain \(\mathcal{D}(\mathcal{A}_2) \subset \mathcal{H}_2\). Then \(\mathcal{A}_2\) serves as the infinitesimal generator of a \(C_0\)-contraction semigroup \(\{e^{t\mathcal{A}_2}\}_{t \ge 0}\) on \(\mathcal{H}_2\). Consequently:
        If \(U_0 \in \mathcal{H}_2\), the abstract Cauchy problem \eqref{model-abstract-2} admits a unique mild solution \(U(t) = e^{t\mathcal{A}_2}U_0\) satisfying \(U \in C(\mathbb{R}^+, \mathcal{H}_2)\); If \(U_0 \in \mathcal{D}(\mathcal{A}_2)\) (the domain of \(\mathcal{A}_2\)), then \(U(t) = e^{t\mathcal{A}_2}U_0\) is the unique classical solution to \eqref{model-abstract-2}, and it belongs to \(C(\mathbb{R}^+, \mathcal{D}(\mathcal{A}_2)) \cap C^1(\mathbb{R}^+, \mathcal{H}_2)\).
\end{theorem}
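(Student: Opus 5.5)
As for Theorem \ref{Well-Posedness-1}, we will apply the Lumer--Phillips theorem on $\mathcal{H}_2$. We need to show two things: that $\mathcal{A}_2$ is dissipative, and that $0\in\varrho(\mathcal{A}_2)$, which gives maximality. Density of $\mathcal{D}(\mathcal{A}_2)$ is automatic, since a maximal dissipative operator on a Hilbert space is densely defined. Once $\mathcal{A}_2$ generates a $C_0$-contraction semigroup, the mild and classical solution statements follow from standard semigroup theory, exactly as in Section \ref{sec2.1}.

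\emph{Dissipativity.} Take $U\in\mathcal{D}(\mathcal{A}_2)$ and compute ${\rm Re}(\mathcal{A}_2U,U)_{\mathcal{H}_2}$ by integrating by parts as in Lemma \ref{dissipative-1}. The terms not involving memory cancel and leave $-\gamma\|v\|^2-\sigma\|q\|^2$. The new contributions come from two places:
\begin{itemize}
\item the term $(\int_0^\infty g(s)\eta_{xx}(s)ds,\Psi)=-\int_0^\infty g(s)(\eta_x(s),\Psi_x)ds$;
\item the history component $(\Psi-\eta_s,\eta)_{L_g^2(\mathbb{R}^+;H_0^1)}=\int_0^\infty g(s)(\Psi_x,\eta_x(s))ds-\int_0^\infty g(s)(\eta_{sx},\eta_x)ds$.
\end{itemize}
The mixed terms cancel after taking real parts. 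For the last term,
\[
-{\rm Re}\int_0^\infty g(s)(\eta_{sx},\eta_x)ds=\frac12\int_0^\infty g'(s)\|\eta_x(s)\|^2ds\le 0 .
\]
This uses $\eta(\cdot,0)=0$, and also needs $g(s)\|\eta_x(s)\|^2\to0$ as $s\to\infty$ along a sequence. That limit follows because $g\|\eta_x\|^2\in L^1$ and its derivative is integrable (from $\eta,\eta_s\in L^2_g$ and $|g'|$ controlled). I would handle this carefully by integrating on $(\varepsilon,1/\varepsilon)$ and passing to the limit. The outcome is
\[
{\rm Re}(\mathcal{A}_2U,U)_{\mathcal{H}_2}\le-\gamma\|v\|^2-\sigma\|q\|^2-\frac{\kappa_1}{2}\|\eta\|_{L^2_g(\mathbb{R}^+;H_0^1)}^2\le0 .
\]

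\emph{Maximality.} For $F=(f_1,\dots,f_9)\in\mathcal{H}_2$ we solve $\mathcal{A}_2U=F$.
\begin{itemize}
\item As before, $v=f_1$, $\Phi=f_3$ and $\Psi=f_5\in H_0^1$.
\item The last equation $\Psi-\eta_s=f_9$, together with $\eta(0)=0$, gives $\eta(s)=sf_5-\int_0^sf_9(r)dr$.
\end{itemize}
The main obstacle is to check that $\eta,\eta_s\in L^2_g(\mathbb{R}^+;H_0^1)$. For the term $sf_5$ we use $\int_0^\infty s^2g(s)ds<\infty$, which follows from $g\le g(0)e^{-\kappa_1 s}$. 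For the integral term we use the Hardy-type estimate
\[
\int_0^\infty g(s)\Big\|\int_0^s f_{9x}\Big\|^2ds\le \frac{C}{\kappa_1^2}\int_0^\infty g\|f_{9x}\|^2ds .
\]
This is proved by integrating by parts with $g'\le-\kappa_1g$, i.e.\ the standard argument with Cauchy--Schwarz on $\int_0^\infty g(s)\big(\int_0^s\big)\cdot\big(\int_0^s\big)'$.

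With $\eta$ now known, the remaining system for $(u,\varphi,\psi,\theta)$ is the one in Lemma \ref{maximal-1}, with three changes:
\begin{itemize}
\item $b$ is replaced by $\tilde b$;
\item $g_3$ gains the extra source $-\int_0^\infty g(s)\eta_{xx}(s)ds$;
\item this source lies in $H^{-1}$, and it enters the weak form as $+\int_0^\infty g(s)(\eta_x(s),\tilde\psi_x)ds$, which is a bounded functional on $H_0^1$.
\end{itemize}
The bilinear form $\mathcal{L}$ is the same with $\tilde b>0$, so it is still coercive, and Lax--Milgram gives a unique $(u,\varphi,\psi,\theta)\in V$. We then recover $q\in H^1$ from the thermal equations as before. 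The boundary regularity $u_x,\varphi_x\in H_0^1$ and $u,\varphi\in H^2$ is obtained by the same cutoff argument used in Lemma \ref{maximal-1}.

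Finally, the third equation gives $(\tilde b\psi+\int_0^\infty g\eta\,ds)_{xx}\in L^2$, which is exactly the domain condition. All the bounds combine to $\|U\|_{\mathcal{H}_2}\le C\|F\|_{\mathcal{H}_2}$, so $0\in\varrho(\mathcal{A}_2)$. Lumer--Phillips then completes the proof.
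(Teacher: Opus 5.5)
Your proposal is correct and follows the paper's proof essentially step by step. The shared steps are: Lumer--Phillips; the identity ${\rm Re}(\mathcal{A}_2U,U)_{\mathcal{H}_2}=-\gamma\|v\|^2-\sigma\|q\|^2+\frac12\int_0^\infty g'(s)\|\eta_x(s)\|^2ds$; the explicit history $\eta(s)=sf_5-\int_0^sf_9(r)\,dr$, shown to lie in $L^2_g(\mathbb{R}^+;H_0^1)$ by integrating by parts against $g'\le-\kappa_1g$; and Lax--Milgram for the $(u,\varphi,\psi,\theta)$ system with $\tilde b$ in place of $b$. Splitting off $sf_5$ via $\int_0^\infty s^2g\,ds<\infty$, rather than applying the Hardy-type bound to all of $\eta$ as the paper does, is only a cosmetic difference.

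One small correction concerns the limit at $s=\infty$. The vanishing of $g(s)\|\eta_x(s)\|^2$ along some sequence $s_k\to\infty$ follows from $g\|\eta_x\|^2\in L^1(\mathbb{R}^+)$ alone, and monotonicity of $s\mapsto\int_0^s g'\|\eta_x\|^2$ then lets you pass to the limit. So no control of $|g'|$ by $g$ is needed, which matters because Assumption~\ref{assumg} provides none.
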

\begin{theorem}[Exponential stability]\label{exponential stability}
	 Under Assumption \ref{assumg}, the \(C_0\)-semigroup \(\{e^{\mathcal{A}_2 t}\}_{t \geq 0}\) generated by \(\mathcal{A}_2\) is exponentially stable. Specifically, there exist positive constants \(\alpha, \beta > 0\) (independent of \(t\)) such that
        \[
        \|e^{\mathcal{A}_2 t}\|_{\mathcal{L}(\mathcal{H}_2 \to \mathcal{H}_2)} \leq \alpha e^{-\beta t} \quad \text{for all } t \ge 0.
        \]
\end{theorem}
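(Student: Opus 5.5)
We will use the frequency-domain criterion of Theorem \ref{Stability Theorem}(i). Since $\mathcal{A}_2$ generates a contraction semigroup (Theorem \ref{Well-Posedness-2}), it suffices to show two things: first, $i\mathbb{R}\subset\varrho(\mathcal{A}_2)$; second, $\|U\|_{\mathcal{H}_2}\le C\|F\|_{\mathcal{H}_2}$ uniformly for $|\lambda|\ge 1$ whenever $(i\lambda I-\mathcal{A}_2)U=F$.

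\textbf{Step 1 (dissipation).} As in Lemma \ref{dissipative-1}, integrating by parts in $s$ and using $\eta(\cdot,0)=0$ together with $g'\le-\kappa_1 g$ gives
\[
\operatorname{Re}(\mathcal{A}_2U,U)_{\mathcal{H}_2}=-\gamma\|v\|^2-\sigma\|q\|^2+\tfrac12\int_0^\infty g'(s)\|\eta_x(s)\|^2ds .
\]
Hence $\|v\|^2+\|q\|^2+\kappa_1\int_0^\infty g\|\eta_x\|^2ds\le C\|U\|\|F\|$.

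\textbf{Step 2 ($i\mathbb{R}\subset\varrho(\mathcal{A}_2)$).} We argue by contradiction, using the standard approximate-eigenvalue argument because $\mathcal{D}(\mathcal{A}_2)$ is not compactly embedded once the history variable is present. Suppose there are $\lambda_n\to\lambda$ and unit vectors $U_n$ with $(i\lambda_n-\mathcal{A}_2)U_n\to0$. The same multiplier estimates as in Step 3 then force $U_n\to0$, a contradiction. Note that $0\in\varrho(\mathcal{A}_2)$ is obtained as in Lemma \ref{maximal-1}.

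\textbf{Step 3 (uniform resolvent bound).} We reuse the multiplier lemmas of Section \ref{sec2.2}. Lemmas \ref{lemma-ux}, \ref{lemma-theta}, \ref{lemma-varphix}, \ref{lemma-varphi-u} and \ref{lemma-Phi} involve only equations $(\ref{Resolvent equations})_{1\text{--}4,7,8}$, which are unchanged. They therefore still give, for every $\epsilon>0$, a bound $\epsilon\|U\|^2+C_\epsilon\|F\|^2$ for $\|u_x\|^2$, $\|\theta\|^2$, $\|\varphi_x+\psi\|^2$, $\|\varphi-u\|^2$ and $\|\Phi\|^2$, with no loss in $\lambda$. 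The new ingredient is the control of $\psi$ and $\Psi$ by the memory term, which replaces Lemma \ref{lemma-psix} and removes the $\lambda^2$/$\lambda^4$ loss and the wave-speed condition.

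For $\|\psi_x\|^2$, we use the identity $i\lambda\eta+\eta_s-\Psi=f_9$ to write $\psi=\eta+(\text{terms})$. A cleaner route is to multiply equation $6$ by $\int_0^\infty g(s)\eta(s)\,ds$, and also equation $6$ by $\psi$.

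For $\|\Psi\|^2$, we take the inner product of equation $6$ with $b_0^{-1}\int_0^\infty g(s)\eta(s)\,ds$. The term $i\lambda\rho_2(\Psi,\int g\eta)$ is rewritten through the $\eta$-equation as $-\rho_2 b_0\|\Psi\|^2$ plus $\rho_2(\Psi,\int g\eta_s)$, and the latter, after integration by parts in $s$, is bounded by $C\|\Psi\|\,\bigl(\int_0^\infty(-g')\|\eta\|^2\bigr)^{1/2}$. All remaining terms are controlled by Step 1, Poincaré's inequality, and $\|\psi_x\|$, $\|\varphi_x+\psi\|$, $\|\theta\|$. This yields
\[
\|\Psi\|^2\le \epsilon\|U\|^2+C_\epsilon\|F\|^2+C\|\psi_x\|\Bigl(\int g\|\eta_x\|^2\Bigr)^{1/2}.
\]
Then equation $6$ tested with $\psi$ (as in Lemma \ref{lemma-Psi}, now with $\tilde b$ and the memory term) gives $\tilde b\|\psi_x\|^2\le\rho_2\|\Psi\|^2+$ small terms. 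Combining the two inequalities with a suitable weight absorbs $\|\psi_x\|^2$ and $\|\Psi\|^2$.

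Summing all estimates and taking $\epsilon$ small gives $\|U\|_{\mathcal{H}_2}\le C\|F\|_{\mathcal{H}_2}$.

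\textbf{Main obstacle.} The main difficulty is the coupling in the $\Psi$ estimate. The term $(\Psi,\int g\eta_s)$ and the commutation of the $H^2$ regularity only on $\tilde b\psi+\int g\eta$ require care, as do the boundary terms of $\Phi_x$ against $\int g\eta$. We also expect equation $4$ tested with $\int g\eta_x$ to produce $\lambda(\Phi,\int g\eta_x)$-type terms. These must be converted using the $\eta$-equation, not by losing a power of $\lambda$. I would first try the multiplier $\int_0^\infty g(s)\eta(s)ds$ on equation $6$ only, avoiding any cross-multiplier between the $\varphi$ and $\psi$ equations, since that is where the wave-speed condition entered in Section \ref{sec2.2}.
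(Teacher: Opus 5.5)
Your proposal is correct and follows the same overall plan as the paper: the criterion of Theorem \ref{Stability Theorem}(i), the imaginary axis handled by an approximate-eigenvalue contradiction (needed, as you note, because $\mathcal{D}(\mathcal{A}_2)$ is not compactly embedded), the dissipation bounds for $v,q,\eta$, and a verbatim reuse of Lemmas \ref{lemma-ux}--\ref{lemma-Phi} for $u_x,\theta,\varphi_x+\psi,\varphi-u,\Phi$.

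The genuine difference is the $\Psi$-estimate. The paper (Lemma \ref{boundedness:Psi}) tests the $\psi$-equation with $\Psi$. It then replaces $\Psi_x$ in $\tilde b(\psi_x,\Psi_x)$ by $i\lambda\eta_x+\eta_{sx}-f_{9x}$, after averaging against $g$. The memory coupling $(\eta,\Psi)_{L_g^2}$ is bounded through $\Psi=i\lambda\psi-f_5$. Every term is then of size $|\lambda|(\epsilon\|U\|^2+C_\epsilon\|F\|^2)$, and one divides by $|\lambda|\rho_2 b_0$ with $|\lambda|\ge1$.

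You instead use the classical memory multiplier $b_0^{-1}\int_0^\infty g(s)\eta(s)\,ds$. Via $i\lambda\int g\eta=b_0\Psi-\int g\eta_s+\int gf_9$ and $\int g\eta_s=-\int g'\eta$, it produces $-\rho_2\|\Psi\|^2$ directly. The remaining terms are bounded by $\|U\|\,\|\eta\|_{L_g^2}$, by $\|\Psi\|\bigl(\int_0^\infty(-g')\|\eta_x\|^2ds\bigr)^{1/2}$, or by $\|U\|\|F\|$. Three consequences follow:
\begin{itemize}
\item No power of $\lambda$ appears, so the bound holds for every real $\lambda$.
\item Your $\Psi$-bound does not even need the Section 2 lemmas.
\item Your leftover term $C\|\psi_x\|(\int g\|\eta_x\|^2)^{1/2}$ is already $\le\epsilon\|U\|^2+C_\epsilon\|F\|^2$ by Step 1, so no weighted combination is required.
\end{itemize}
For $\psi_x$ the two routes are the same identity: the paper tests with $\Psi=i\lambda\psi-f_5$ and takes imaginary parts, while you test with $\psi$.

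A few minor remarks.
\begin{itemize}
\item Reducing Step 2 to Step 3 requires the Step 3 bounds for $|\lambda|\ge\delta$ rather than $|\lambda|\ge1$, because Lemmas \ref{lemma-ux} and \ref{lemma-varphix} carry factors $1/\lambda$. With $\delta$-dependent constants this is harmless, since the limit point of $\lambda_n$ is nonzero. The paper avoids the issue with a direct fixed-$\omega$ argument based on Lemma \ref{Thm:SpectralApprox}.
\item The obstacles you list at the end (cross-multipliers with the $\varphi$-equation, boundary terms of $\Phi_x$) do not arise on the route you actually use.
\item The $H^2$ issue disappears once you test the combination $(\tilde b\psi+\int g\eta)_{xx}$ against $H_0^1$ functions.
\item Both your normalisation by $b_0^{-1}$ and the paper's division by $\rho_2 b_0$ tacitly need $g\not\equiv0$. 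Assumption \ref{assumg} does not literally exclude $g\equiv0$, and in that case the system is the memoryless one, which is not exponentially stable under the hypotheses of Theorem \ref{Non-exponential Stability}.
\end{itemize}
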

\subsection{Well-Posedness}\label{sec3.1}

Following the approach in Subsection \ref{sec2.1}, to get Theorem \ref{Well-Posedness-2}, we only need to show that $\mathcal{A}_2$ is both dissipative and maximal, as stated in the following two lemmas.
\begin{lemma}\label{dissipative-2}
	The operator $\mathcal{A}_2$ defined in \eqref{definition:A-2} is dissipative.
\end{lemma}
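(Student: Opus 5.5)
The plan is to compute ${\rm Re}(\mathcal{A}_2U,U)_{\mathcal{H}_2}$ directly for $U\in\mathcal{D}(\mathcal{A}_2)$, following the proof of Lemma \ref{dissipative-1}. Every term from the $\delta=0$ case reappears with $b$ replaced by $\dt b$. These terms cancel in the same way after integration by parts, using the boundary conditions encoded in $\mathcal{D}(\mathcal{A}_2)$: $u_x,\varphi_x,\theta,\Psi\in H_0^1$ and $\psi\in H_0^1$. They leave $-\gamma\|v\|^2-\sigma\|q\|^2$. The new contributions are two. The first is the memory term in the sixth component, paired with $\Psi$: $\big(\int_0^\infty g(s)\eta_{xx}(s)\,ds,\Psi\big)$. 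The second is the history component $(\Psi-\eta_s,\eta)_{L_g^2(\mathbb{R}^+;H_0^1)}$.

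For the first, I integrate by parts in $x$, using $\Psi\in H_0^1$. The domain only guarantees $\dt b\psi+\int_0^\infty g\eta\,ds\in H^2$, so I will group the $\dt b\psi_{xx}$ and memory terms together as the second derivative of this combination before integrating by parts. This gives
\begin{align*}
\Big(\dt b\psi_{xx}+\int_0^\infty g(s)\eta_{xx}(s)\,ds,\Psi\Big)=-\dt b(\psi_x,\Psi_x)-\int_0^\infty g(s)(\eta_x(s),\Psi_x)\,ds .
\end{align*}
The first piece cancels, in real part, against $\dt b(\Psi_x,\psi_x)$ from the norm. The second piece cancels, in real part, against the $\int_0^\infty g(s)(\Psi_x,\eta_x(s))\,ds$ part of $(\Psi,\eta)_{L_g^2}$.

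It remains to handle $-{\rm Re}\int_0^\infty g(s)(\eta_{sx}(s),\eta_x(s))\,ds=-\frac12\int_0^\infty g(s)\frac{d}{ds}\|\eta_x(s)\|^2ds$. I integrate by parts in $s$. The boundary term at $s=0$ vanishes because $\eta(\cdot,0)=0$. For the term at infinity I need $g(s)\|\eta_x(s)\|^2\to0$ along a sequence $s_n\to\infty$. This follows because $\eta,\eta_s\in L_g^2$ and $g'\le -\kappa_1 g\le0$: the function $s\mapsto g(s)\|\eta_x(s)\|^2$ is integrable and has an integrable derivative on the set where it is handled, so its limit exists and must be zero. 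A cleaner route is to truncate at $s=S$ and let $S\to\infty$. The result is
\begin{align*}
{\rm Re}(\mathcal{A}_2U,U)_{\mathcal{H}_2}=-\gamma\|v\|^2-\sigma\|q\|^2+\frac12\int_0^\infty g'(s)\|\eta_x(s)\|^2ds\le0,
\end{align*}
where nonpositivity uses $g'\le-\kappa_1 g\le0$ from Assumption \ref{assumg}.

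I expect the main obstacle to be justifying the integrations by parts with only the limited regularity in $\mathcal{D}(\mathcal{A}_2)$. One issue is splitting $\dt b\psi_{xx}$ from the memory integral, which I avoid by keeping them grouped. The other is the vanishing of the boundary term at $s=\infty$. For the latter, I first argue that $\int_0^S g'\|\eta_x\|^2$ is monotone and bounded, then pass to the limit.
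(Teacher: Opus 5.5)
Your proposal is correct and follows the same route as the paper. Both compute ${\rm Re}(\mathcal{A}_2U,U)_{\mathcal{H}_2}$ directly: the $\dt b$ and memory cross terms cancel in real part, and integrating by parts in $s$ (using $\eta(\cdot,0)=0$) leaves $-\gamma\|v\|^2-\sigma\|q\|^2+\frac12\int_0^\infty g'(s)\|\eta_x(s)\|^2\,ds\le 0$. Your additional care justifies steps the paper performs without comment: grouping $\dt b\psi+\int_0^\infty g(s)\eta(s)\,ds$ before integrating by parts in $x$, and the truncation/monotone-bounded argument for the boundary term at $s=\infty$.
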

\begin{proof}
	For any $U \in \mathcal{D}(\mathcal{A}_2)$, using Assumption \ref{assumg}, a direct calculation gives
	\begin{align*}
		{\rm Re}(\mathcal{A}_2U,U)_{\mathcal{H}_2} &= {\rm Re}\bigg\{
		\rho\left(\rho^{-1} \left[ \alpha u_{xx} + \beta (\varphi - u) - \gamma v \right], v\right) + \alpha\left(v_x, u_x\right)  + \rho_1\left(\rho_1^{-1} \left[ \kappa (\varphi_x + \psi)_x - \beta (\varphi - u) - m \theta_x \right], \Phi\right) \\
		&\quad + \rho_2\left(\rho_2^{-1} \left[\dt b \psi_{xx} - \kappa (\varphi_x + \psi) + \int_0^\infty g(s) \eta_{xx}(s) ds + m\theta \right], \Psi\right) + \dt b\left(\Psi_x,\psi_x\right) + \beta\left(\Phi - v, \varphi - u\right)  \\
		&\quad + \kappa\left(\Phi_x + \Psi, \varphi_x + \psi\right)  + \rho_3\left(\rho_3^{-1} \left[ -q_x - m (\Phi_x + \Psi) \right], \theta\right)  \\
&\quad +\tau\left(\tau^{-1}(-\sigma q - \theta_x),q\right) + \int_0^\infty g(s)(\Psi_x - \eta_{sx}, \eta_x)  ds \bigg\} \\
		&= -\gamma\|v\|^2  - \sigma \|q\|^2 + \frac{1}{2}\int_{0}^{\infty}g'(s)\|\eta_x\|^2ds\\
		&\le -\gamma\|v\|^2  - \sigma \|q\|^2 - \frac{\kappa_1}{2} \int_0^\infty g(s) \|\eta_x\|^2 ds\\
		&\le 0.
	\end{align*}
	Thus, $\mathcal{A}_2$ satisfies the dissipativity condition.
\end{proof}
\begin{lemma}\label{maximal-2}
	$0 \in \varrho(\mathcal{A}_2)$ (the resolvent set of $\mathcal{A}_2$). Consequently, $\mathcal{A}_2$ is maximal.
\end{lemma}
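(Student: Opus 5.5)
We show that for every $F=(f_1,\dots,f_8,f_9)\in\mathcal{H}_2$ the equation $\mathcal{A}_2U=F$ has a unique solution $U\in\mathcal{D}(\mathcal{A}_2)$ with $\|U\|_{\mathcal{H}_2}\le C\|F\|_{\mathcal{H}_2}$, following the proof of Lemma \ref{maximal-1}. Componentwise, $\mathcal{A}_2U=F$ gives $v=f_1\in H_*^1$, $\Phi=f_3\in H_*^1$, $\Psi=f_5\in H_0^1$, together with the remaining equations for $u,\varphi,\psi,\theta,q$. The new ingredient is the history equation $\Psi-\eta_s=f_9$ with $\eta(\cdot,0)=0$. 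We solve it explicitly first:
\begin{equation*}
\eta(s)=s f_5-\int_0^s f_9(r)\,dr .
\end{equation*}
Next we check $\eta\in L_g^2(\mathbb{R}^+;H_0^1)$. For the term $sf_5$, Assumption \ref{assumg} gives $g(s)\le g(0)e^{-\kappa_1 s}$, so $\int_0^\infty s^2 g(s)\,ds<\infty$. For the integral term, we use the standard Hardy-type estimate available under $g'\le-\kappa_1 g$: integrating by parts (with $g$ decreasing) yields
\begin{equation*}
\int_0^\infty g(s)\Big\|\int_0^s f_{9x}\Big\|^2ds\le \frac{C}{\kappa_1^2}\int_0^\infty g(s)\|f_{9x}(s)\|^2ds .
\end{equation*}
The same bounds give $\eta_s=f_5-f_9\in L_g^2(\mathbb{R}^+;H_0^1)$, and $\|\eta\|_{L_g^2}\le C\|F\|_{\mathcal{H}_2}$.

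With $\eta$ known, the term $\int_0^\infty g(s)\eta_{xx}(s)\,ds$ moves to the right-hand side of the $\psi$-equation. In weak form it contributes $\int_0^\infty g(s)(\eta_x(s),\tilde\psi_x)\,ds$, which is a bounded functional on $H_0^1$ by Cauchy–Schwarz, since $b_0<\infty$ and $\eta\in L_g^2$. As in Lemma \ref{maximal-1}, we eliminate $q$ using $q=-\sigma^{-1}(\theta_x+\tau f_8)$, which gives $-\theta_{xx}=g_4\in H^{-1}$. This produces a variational problem on $V=H_*^1\times H_*^1\times H_0^1\times H_0^1$. Its bilinear form is $\mathcal{L}$ from \eqref{L} with $b$ replaced by $\tilde b$, and its linear form is $\mathcal{F}$ augmented by the memory functional. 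Coercivity is proved exactly as before, now using $\tilde b>0$, with the $-m(\theta,\varphi_x+\psi)$ term absorbed by Young's and Poincaré's inequalities. Lax–Milgram then yields a unique $(u,\varphi,\psi,\theta)\in V$. We recover $q\in L^2$ from equation 8, and $q_x=-m(f_{3x}+f_5)-\rho_3f_7\in L^2$ from equation 7, so $q\in H^1$.

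The remaining step is regularity and boundary conditions. The $\psi$-equation read distributionally gives $\big(\tilde b\psi+\int_0^\infty g\eta\,ds\big)_{xx}\in L^2$, hence $\tilde b\psi+\int_0^\infty g\eta\,ds\in H^2$, which is exactly the domain requirement. The Neumann conditions $u_x,\varphi_x\in H_0^1$ follow by repeating the cutoff-function argument (\eqref{vb}–\eqref{jl7}) verbatim, since those equations are unchanged; $\theta\in H_0^1$ comes from $V$. Uniqueness follows from the Lax–Milgram solution together with uniqueness of $\eta$ given $\eta(0)=0$. The bound $\|U\|_{\mathcal{H}_2}\le C\|F\|_{\mathcal{H}_2}$ follows from the continuity constants, so $0\in\varrho(\mathcal{A}_2)$. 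Since $\mathcal{A}_2$ is dissipative (Lemma \ref{dissipative-2}), maximality follows.

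The main obstacle is the weighted estimate for $\int_0^s f_9$ in $L_g^2$. The step that needs care is the integration by parts, in particular the boundary term at infinity, which should be handled by truncating to $(0,T)$ and letting $T\to\infty$.
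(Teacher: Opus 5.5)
Your proposal is correct and follows the paper's proof essentially step for step. Both arguments solve the history equation explicitly for $\eta$ and get the weighted bound in $L_g^2(\mathbb{R}^+;H_0^1)$ from $g'\le-\kappa_1 g$, by integrating by parts on a truncated interval and absorbing. Both then reduce to the Lax--Milgram problem of Lemma \ref{maximal-1}, with $b$ replaced by $\tilde b$ and the memory term moved to the right-hand side as an $H^{-1}$ datum.

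The only difference is cosmetic. You split off $sf_5$ and handle it through the exponential decay of $g$, whereas the paper applies the Hardy-type bound to $\eta$ as a whole via $\eta_s=\Psi-f_9$. You also make explicit the $H^2$ regularity of $\tilde b\psi+\int_0^\infty g\eta\,ds$, which the paper leaves implicit.
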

\begin{proof}
	For $\mathcal{A}_2U = F:=(f_1,f_2,f_3,f_4,f_5,f_6,f_7,f_8,f_9) \in \mathcal{H}_2$, we aim to prove that there exists a unique solution $U\in\mathcal{D}(\mathcal{A}_2)$ satisfying $\|U\|_{\mathcal{H}_2} \leq C\|F\|_{\mathcal{H}_2}$, for some constant $C > 0$. \\
	Substitute the definition of $\mathcal{A}_2$ into $\mathcal{A}_2U = F$, we obtain the following equations
	\begin{equation}\label{eq:resolvent-system-A-2}
		\begin{cases}
			v = f_1, \\
			\alpha u_{xx} + \beta (\varphi - u) - \gamma v = \rho f_2, \\
			\Phi = f_3, \\
			\kappa (\varphi_x + \psi)_x - \beta (\varphi - u) - m \theta_x = \rho_1 f_4, \\
			\Psi = f_5, \\
			\dt b \psi_{xx} - \kappa (\varphi_x + \psi) + \int_0^\infty g(s) \eta_{xx}(s) ds + m\theta = \rho_2 f_6, \\
			-q_x - m (\Phi_x + \Psi) = \rho_3 f_7, \\
			-\sigma q - \theta_x = \tau f_8, \\
			\Psi - \eta_s = f_9.
		\end{cases}
	\end{equation}
	First, equations \eqref{eq:resolvent-system-A-2}$_{1,3,5}$ yield
	\begin{equation}\label{df1}
		v = f_1 \in H_*^1, \quad \Phi = f_3 \in H_*^1, \quad \Psi = f_5 \in H_0^1.
	\end{equation}
	Second, from equation \eqref{eq:resolvent-system-A-2}$_9$, it follows that $\eta_s = \Psi - f_9 \in L_g^2(\mathbb{R}^+; H_0^1)$. Integrating with respect to $s$ yields
	\begin{align}\label{def:eta}
		\eta(s) = s\Psi - \int_0^s f_9(\tau)d\tau\in C(\mathbb{R}^+; H_0^1).
	\end{align}
	Now, we prove that $\eta \in L_g^2(\mathbb{R}^+; H_0^1)$. First, we show, $\int_0^N g(s)\|\eta_x\|^2ds$ is bounded for any $N > 0$.
	By the expression of $\eta$ \eqref{def:eta}, the following estimate holds
	\begin{align*}
		\|\eta_x\|^2 &= \|s\Psi_x - \int_0^s f_{9x}(\tau)d\tau\|^2\\
		 &\le 2s^2\|\Psi_x\|^2 + 2s\int_0^s\| f_{9x}(\tau)\|^2d\tau  \\
		&\le 2s^2\|\Psi_x\|^2 + \frac{2s}{g(s)}\int_0^\infty g(\tau)\| f_{9x}(\tau)\|^2d\tau \\
		&= 2s^2\|\Psi_x\|^2 + \frac{2s}{g(s)}\|f_9\|_{L_g^2(\mathbb{R}^+; H_0^1)} ^2.
	\end{align*}
	Combined with the Assumption \ref{assumg}, we conclude that $g$ decays exponentially. Therefore, for any $N > 0$, there exists a constant $C_N$ depending on $N$ such that,
	\begin{align}
		\int_0^N g(s)\|\eta_x\|^2ds &\leq 2\int_0^N g(s)s^2ds \|\Psi_x\|^2 + 2\int_0^N s ds \|f_9\|_{L_g^2(\mathbb{R}^+; H_0^1)}^2\notag \\
		&\leq C_N.\label{ddf}
	\end{align}
	Now, using Assumption \ref{assumg}, integrating by parts, and applying Young's inequality, it follows that
	\begin{align*}
		\int_0^N g(s)\|\eta_x\|^2ds &\leq -\frac{1}{\kappa_1}\int_0^N g'(s)\|\eta_x\|^2ds \\
		&= -\frac{g(N)}{\kappa_1}\|\eta_x(N)\|^2 + \frac{g(0)}{\kappa_1}\|\eta_x(0)\|^2 + \frac{2}{\kappa_1}\int_0^N g(s){\rm Re}\left(\eta_x, \eta_{sx}\right)ds \\
		&\leq \frac{2}{\kappa_1}\int_0^N g(s)\|\eta_x\|\|\eta_{sx}\|ds \\
		&\leq \frac{1}{2}\int_0^N g(s)\|\eta_x\|^2ds + \frac{2}{\kappa_1^2}\int_0^N g(s)\|\eta_{sx}\|^2ds.
	\end{align*}
	Since by estimate \eqref{ddf}, $\int_0^N g(s)\|\eta_x\|^2ds$ is bounded. The terms in the above inequality can be reordered to yield
	\begin{align*}
		\int_0^N g(s)\|\eta_x\|^2ds \leq \frac{4}{\kappa_1^2}\int_0^N g(s)\|\eta_{sx}\|^2ds.
	\end{align*}
	Then, letting $N \to \infty$, we find that $\eta \in L_g^2(\mathbb{R}^+; H_0^1)$ follows directly from $\eta_s \in L_g^2(\mathbb{R}^+; H_0^1)$.
	
	Using equation \eqref{eq:resolvent-system-A-2}$_8$, substituting $q$ into equation \eqref{eq:resolvent-system-A-2}$_7$, and rearranging equations \eqref{eq:resolvent-system-A-2}$_{2,4,6}$, we derive
	\begin{align}\label{eq:resolvent-system-1-A-2}
		\begin{cases}
			-\alpha u_{xx} - \beta(\varphi - u) = -\rho f_2 - \gamma v=-\rho f_2 - \gamma f_1 := g_1 \in L_*^2, \\
			-\kappa(\varphi_x + \psi)_x + \beta(\varphi - u) + m\theta_x = -\rho_1 f_4 := g_2 \in L_*^2, \\
			- \dt b \psi_{xx} + \kappa(\varphi_x + \psi) - m\theta = -\rho_2 f_6 +  \int_0^\infty g(s) \eta_{xx}(s) ds  := g_3 \in H^{-1}, \\
			- \theta_{xx} = - \sigma m(\Phi_x + \Psi) - \sigma \rho_3 f_7 + \tau f_{8x}=- \sigma m(f_{3x} + f_5) - \sigma \rho_3 f_7 + \tau f_{8x}:=g_4 \in H^{-1}.
		\end{cases}
	\end{align}
	Following the method of Lemma \ref{maximal-1}, one can prove that equations \eqref{eq:resolvent-system-1-A-2}  has a unique solution $(u,\varphi,\psi,\theta) \in H_*^1 \times H_*^1 \times H_0^1 \times H_0^1$. Furthermore, we can solve for
	$q$, demonstrate that $U \in \mathcal{D}(\mathcal{A}_2)$ and $\|U\|_{\mathcal{H}_2} \leq C\|F\|_{\mathcal{H}_2}$ similarly. Consequently, $0 \in \varrho(\mathcal{A}_2)$.
\end{proof}

\subsection{Exponential Decay} \label{sec3.2}
In this section, we prove the exponential stability result, i.e., Theorem \ref{exponential stability}. The proof is carried out in two sequential steps, as detailed below. Here we invoke the following lemma

\begin{lemma}\label{Thm:SpectralApprox}\cite{MR933321}
	Let $\mathcal{A}$ be the infinitesimal generator of a $C_0$-semigroup on a reflexive Banach space $X$. Then
	$$
	\sigma(\mathcal{A}) \cap i\mathbb{R} = \sigma_{ap}(\mathcal{A}) \cap i\mathbb{R},
	$$
	where $\sigma(\mathcal{A})$ and $\sigma_{ap}(\mathcal{A})$ denote the spectrum and approximate spectrum of the operator $\mathcal{A}$, respectively, and $\sigma_{ap}(\mathcal{A})$ is defined as follows
\begin{align}\label{defapp}
	\sigma_{ap}(\mathcal{A}):= \{ \lambda \in \mathbb{C}: \exists U_{n} \in \mathcal{D}(\mathcal{A}), \|U_{n}\|_{\mathcal{H}} = 1 \text{ and } (\lambda I - \mathcal{A})U_{n} \to 0 \text{ in } \mathcal{H}\},
\end{align}
\end{lemma}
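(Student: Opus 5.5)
The plan is to use the decomposition of the spectrum into approximate point spectrum and residual spectrum, and then try to exclude residual spectrum on the imaginary axis. The inclusion $\sigma_{ap}(\mathcal{A})\cap i\mathbb{R}\subset\sigma(\mathcal{A})\cap i\mathbb{R}$ is immediate from \eqref{defapp}. If $(\lambda I-\mathcal{A})$ had a bounded inverse, then $\|U_n\|\le\|(\lambda I-\mathcal{A})^{-1}\|\,\|(\lambda I-\mathcal{A})U_n\|\to 0$, which contradicts $\|U_n\|=1$.

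For the reverse inclusion I would fix $i\mu\in\sigma(\mathcal{A})\setminus\sigma_{ap}(\mathcal{A})$ and aim for a contradiction. Since $i\mu\notin\sigma_{ap}(\mathcal{A})$, there is $c>0$ with $\|(i\mu-\mathcal{A})x\|\ge c\|x\|$ for all $x\in\mathcal{D}(\mathcal{A})$. As $\mathcal{A}$ is closed, $i\mu-\mathcal{A}$ is then injective with closed range. Since it is not invertible, the range is a proper closed subspace. Hahn--Banach then gives $x^*\ne0$ annihilating the range, so $i\mu\in\sigma_p(\mathcal{A}^*)$.

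On a reflexive space the adjoint semigroup $T(t)^*$ is again $C_0$ with generator $\mathcal{A}^*$, so $T(t)^*x^*=e^{i\mu t}x^*$. The next step is the openness argument: the set $\{\lambda:\ \|(\lambda-\mathcal{A})x\|\ge c'\|x\|,\ \mathrm{Ran}(\lambda-\mathcal{A})\ne X\}$ is open. The codimension of the range is locally constant among bounded-below operators, by the stability of semi-Fredholm index under small perturbations in $\lambda$. Hence a whole disc around $i\mu$ lies in the residual spectrum, so $i\mu$ is an interior point of $\sigma(\mathcal{A})$. To reach a contradiction I would then use that points of $\partial\sigma(\mathcal{A})$ always belong to $\sigma_{ap}(\mathcal{A})$: if $\lambda_n\to\lambda\in\partial\sigma(\mathcal{A})$ with $\lambda_n\in\varrho(\mathcal{A})$, then $\|(\lambda_n-\mathcal{A})^{-1}\|\to\infty$, and normalized vectors $(\lambda_n-\mathcal{A})^{-1}y_n$ are approximate eigenvectors. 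So it remains to show that $i\mathbb{R}\cap\sigma(\mathcal{A})\subset\partial\sigma(\mathcal{A})$.

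\emph{Main obstacle.} This last step needs the open right half-plane to lie in $\varrho(\mathcal{A})$. That holds when the semigroup is bounded, in particular for the contraction semigroups generated by $\mathcal{A}_1$ and $\mathcal{A}_2$, by Lemmas \ref{dissipative-1}, \ref{maximal-1}, \ref{dissipative-2} and \ref{maximal-2}. It does not hold for an arbitrary $C_0$-semigroup, and I expect the statement as worded to fail without some boundedness. A likely counterexample is the right-shift isometric semigroup on $L^2(0,\infty)$ rescaled by $e^{\omega t}$ with $\omega>0$. Its generator should be bounded below uniformly on $i\mathbb{R}$ (roughly by $\omega$, since the shifts are isometries), yet its spectrum appears to be the whole half-plane $\{\operatorname{Re}\lambda\le\omega\}\supset i\mathbb{R}$, with $i\mathbb{R}$ then lying in the residual spectrum. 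So I would first try to locate the extra hypothesis in \cite{MR933321}. Failing that, I would prove the statement under the assumption that the semigroup is bounded, which is the only case needed for Theorem \ref{exponential stability}.
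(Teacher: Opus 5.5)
Your diagnosis is right, and it is the main point here. The paper gives no argument for this lemma beyond the citation, and its wording drops the hypothesis that makes the statement true. Your counterexample checks out. Let $S$ be the right shift on $L^2(0,\infty)$ and $\omega>0$. The generator of $e^{\omega t}S(t)$ is $\mathcal{A}f=\omega f-f_x$ on $H_0^1(0,\infty)$. Since $\operatorname{Re}(f_x,f)=0$ there, $\operatorname{Re}((i\mu I-\mathcal{A})f,f)=-\omega\|f\|^2$. Hence $\|(i\mu I-\mathcal{A})f\|\ge\omega\|f\|$, so $i\mu\notin\sigma_{ap}(\mathcal{A})$. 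On the other hand, $g(x)=e^{-(i\mu+\omega)x}\in L^2(0,\infty)$ is orthogonal to the range of $i\mu I-\mathcal{A}$, so $i\mu\in\sigma(\mathcal{A})$. Thus $i\mathbb{R}\subset\sigma(\mathcal{A})\setminus\sigma_{ap}(\mathcal{A})$ on a Hilbert space, and the lemma as worded is false.

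The missing hypothesis is boundedness of the semigroup; it is enough that $\{\operatorname{Re}\lambda>0\}\subset\varrho(\mathcal{A})$. The paper only uses the lemma in Step 1 of Section \ref{sec3.2}, for the contraction semigroup generated by $\mathcal{A}_2$ (Lemmas \ref{dissipative-2} and \ref{maximal-2}). So the corrected version is all that is needed, and the argument there survives.

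Your proof of the corrected version is correct, but you can shorten it a lot. Once the open right half-plane lies in $\varrho(\mathcal{A})$, every $i\mu\in\sigma(\mathcal{A})$ is a boundary point of the spectrum. Taking $\lambda_n=i\mu+1/n$ gives $\|(\lambda_nI-\mathcal{A})^{-1}\|\ge 1/\operatorname{dist}(\lambda_n,\sigma(\mathcal{A}))\ge n$. Your normalized vectors $(\lambda_nI-\mathcal{A})^{-1}y_n$ are then already approximate eigenvectors for $i\mu$. This makes the Hahn--Banach step, the adjoint semigroup and the semi-Fredholm openness argument unnecessary, and reflexivity plays no role at all.

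If you want to keep the openness step, there is an elementary substitute for index stability. On the open set $\Omega$ where $\lambda I-\mathcal{A}$ is bounded below, resolvents existing near a point of $\Omega$ are uniformly bounded. Hence $\varrho(\mathcal{A})\cap\Omega$ is relatively clopen in $\Omega$.

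Compared with the paper's bare citation, your route gives a self-contained short proof of the version actually used. It also shows that the version as stated cannot be proved.
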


\textbf{Step1: The Resolvent Set of $\mathcal{A}_2$ Contains the Imaginary Axis.}
To prove that $i\mathbb{R} \subset \rho(\mathcal{A}_2)$, we employ a contradiction argument. Suppose, for contradiction, that $i\mathbb{R} \not\subset \rho(\mathcal{A}_2)$. Since $0 \in \rho(\mathcal{A}_2)$(Lemma \ref{maximal-2}), we assume without loss of generality that there exists $\omega > 0$ such that $i\omega \notin \rho(\mathcal{A}_2)$. Thus, $i\omega \in \sigma(\mathcal{A}_2)$. Then by Lemma \ref{Thm:SpectralApprox}, $i\omega \in \sigma_{ap}(\mathcal{A}_2)$. From the definition of $\sigma_{ap}(\mathcal{A}_2)$, it follows that
there exists a sequence $\{U_n=(u_n, v_n, \varphi_n, \Phi_n, \psi_n, \Psi_n, \theta_n, q_n, \eta_n) \} \subset \mathcal{D}(\mathcal{A}_2)$ with $\|U_n\|_{\mathcal{H}_2} = 1$ such that
\begin{align}\label{convergence}
	(i\omega I - \mathcal{A}_2)U_n \to 0 ~~~~~~~~~\text{in} ~\mathcal{H}_2.
\end{align}
Using the definition of $\mathcal{A}_2$ \eqref{definition:A-2}, the convergence \eqref{convergence} yields
\begin{align}\label{omega-limit-system}
	\begin{cases}
		i\omega u_n - v_n  \to 0 \quad &\text{in } H_*^1,\\
		i\omega \rho v_n -\alpha u_{nxx}-\beta(\varphi_n - u_n)+ \gamma v_n \to 0 \quad &\text{in } L_*^2,\\
		i\omega\varphi_n - \Phi_n \to 0\quad &\text{in } H_*^1,\\
		i\omega\rho_1 \Phi_n - \kappa(\varphi_{nx}+\psi_n)_x + \beta (\varphi_n - u_n) +  m\theta_{nx} \to 0 \quad &\text{in } L_*^2,\\
		i\omega \psi_n - \Psi_n \to 0 \quad &\text{in } H_0^1,\\
		i\omega \rho_2 \Psi_n - \dt b \psi_{nxx} + \kappa(\varphi_{nx} + \psi_n) - \int_{0}^{\infty} g(s) \eta_{nxx}(s) ds - m\theta_n \to 0 \quad &\text{in } L^2,\\
		i\omega \rho_3 \theta_n + q_{nx} + m(\Phi_{nx} + \Psi_n) \to 0 \quad &\text{in } L^2,\\
		i\omega\tau q_n + \sigma q_n + \theta_{nx} \to 0 \quad &\text{in } L^2,\\
		i\omega \eta_n - \Psi_n + \eta_{ns} \to 0\quad &\text{in } L_g^2(\mathbb{R}^+;H_0^1).
	\end{cases}
\end{align}
\begin{lemma}\label{iR:vn,qn,etan}
	Under Assumption \ref{assumg}, as $n \to \infty$, we have
	\begin{align}
		\|v_n\| \to 0,\label{limit:vn}\\
		\|q_n\| \to 0,\label{limit:qn}\\
		\int_{0}^{\infty} g'(s)\|\eta_{nx}(s)\|^2ds \to 0,\label{limit:g'}\\
		\int_{0}^{\infty} g(s)\|\eta_{nx}(s)\|^2ds \to 0.\label{limit:g}
	\end{align}
\end{lemma}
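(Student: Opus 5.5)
The plan is to use the dissipation identity from Lemma \ref{dissipative-2}. I will take the $\mathcal{H}_2$-inner product of $(i\omega I-\mathcal{A}_2)U_n$ with $U_n$ and look at the real part. Since $\|U_n\|_{\mathcal{H}_2}=1$ and $(i\omega I-\mathcal{A}_2)U_n\to0$ in $\mathcal{H}_2$ by \eqref{convergence}, the Cauchy--Schwarz inequality gives
\begin{align*}
\left|{\rm Re}\big((i\omega I-\mathcal{A}_2)U_n,U_n\big)_{\mathcal{H}_2}\right|\le \|(i\omega I-\mathcal{A}_2)U_n\|_{\mathcal{H}_2}\to0 .
\end{align*}
The term $i\omega\|U_n\|^2_{\mathcal{H}_2}$ is purely imaginary, so this real part equals $-{\rm Re}(\mathcal{A}_2U_n,U_n)_{\mathcal{H}_2}$. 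The computation in the proof of Lemma \ref{dissipative-2} evaluates it as
\begin{align*}
-{\rm Re}(\mathcal{A}_2U_n,U_n)_{\mathcal{H}_2}=\gamma\|v_n\|^2+\sigma\|q_n\|^2-\frac12\int_0^\infty g'(s)\|\eta_{nx}(s)\|^2ds .
\end{align*}
That computation is valid for every element of $\mathcal{D}(\mathcal{A}_2)$, so it applies to each $U_n$.

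Next I will use the signs. By Assumption \ref{assumg}, $g'\le-\kappa_1 g\le 0$, so all three terms on the right-hand side are nonnegative. Their sum tends to zero, hence each term tends to zero. This gives \eqref{limit:vn}, \eqref{limit:qn} and \eqref{limit:g'}, after dividing by $\gamma,\sigma>0$.

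For \eqref{limit:g} I will again use the kernel condition, which gives
\begin{align*}
\kappa_1\int_0^\infty g(s)\|\eta_{nx}\|^2ds\le-\int_0^\infty g'(s)\|\eta_{nx}\|^2ds\to0 .
\end{align*}
Since $\kappa_1>0$, this yields \eqref{limit:g}.

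The only delicate point is justifying the integration by parts in $s$ inside Lemma \ref{dissipative-2}. This means showing that $\int_0^\infty g(s){\rm Re}(\eta_{sx},\eta_x)\,ds=-\frac12\int_0^\infty g'\|\eta_x\|^2ds$ for each $U_n$. There are two boundary terms to handle. The term at $s=0$ vanishes because $\eta_n(\cdot,0)=0$ is part of the domain. For the term at infinity, I will argue that $g(s)\|\eta_{nx}(s)\|^2\to0$ along a sequence $s\to\infty$; this follows from $\eta_n,\eta_{ns}\in L^2_g(\mathbb{R}^+;H_0^1)$, exactly as in the truncation argument with a cutoff $N$ used in the proof of Lemma \ref{maximal-2}. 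Because Lemma \ref{dissipative-2} is already established for every element of $\mathcal{D}(\mathcal{A}_2)$, I will take this step as given.
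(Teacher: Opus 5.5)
Your proposal is correct and follows the paper's argument: you take the real part of $\bigl((i\omega I-\mathcal{A}_2)U_n,U_n\bigr)_{\mathcal{H}_2}$ and evaluate it via the dissipation identity $\gamma\|v_n\|^2+\sigma\|q_n\|^2-\frac12\int_0^\infty g'(s)\|\eta_{nx}(s)\|^2ds$, use $g'\le-\kappa_1 g\le 0$ to get the first three limits, and then use $\kappa_1 g\le -g'$ for the fourth. Your extra remark on the boundary terms of the integration by parts in $s$ is sound: the term at $s=0$ vanishes because $\eta_n(\cdot,0)=0$, and the term at infinity vanishes along a sequence because $g\|\eta_{nx}\|^2$ is integrable; the paper leaves this justification implicit.
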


\begin{proof}
	Take the real part of the $\mathcal{H}_2$-inner product of $i \omega U_n - \mathcal{A}_2 U_n$ with $U_n$
	\begin{align*}
		\operatorname{Re} ( i \omega U_n - \mathcal{A}_2 U_n, U_n )_{\mathcal{H}_2}
		&= -\operatorname{Re} ( \mathcal{A}_2 U_n, U_n )_{\mathcal{H}_2}\notag\\
		&= \gamma\|v_n\|^2 + \sigma\|q_n\|^2 - \frac{1}{2}\int_{0}^{\infty} g'(s)\|\eta_{nx}\|^2ds.
	\end{align*}
	From the convergence relation \eqref{convergence} and $\|U_n\|_{\mathcal{H}_2} = 1$, we have
	\begin{align*}
		\operatorname{Re} ( i \omega U_n - \mathcal{A}_2 U_n, U_n )_{\mathcal{H}_2} \to 0.
	\end{align*}
	Thus, \eqref{limit:vn}--\eqref{limit:g'} follow immediately. Estimate \eqref{limit:g} is derived from \eqref{limit:g'} using Assumption \ref{assumg}.
\end{proof}
\begin{lemma}\label{iR:thetan}
	Under Assumption \ref{assumg}, as $n \to \infty$, we have
	\begin{align}
		\|\theta_{nx}\| \to 0.\label{limit:thetan}
	\end{align}
\end{lemma}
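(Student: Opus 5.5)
The claim follows directly from the eighth line of the limit system \eqref{omega-limit-system} together with Lemma \ref{iR:vn,qn,etan}. Set
\[
\epsilon_n := i\omega\tau q_n + \sigma q_n + \theta_{nx}.
\]
By \eqref{omega-limit-system}$_8$ we have $\epsilon_n \to 0$ in $L^2$. Solving for $\theta_{nx}$ gives
\[
\theta_{nx} = \epsilon_n - (i\omega\tau + \sigma) q_n .
\]
Here $\omega$ is fixed, since $i\omega$ is the single spectral point chosen in Step 1. Taking $L^2$ norms and using the triangle inequality,
\[
\|\theta_{nx}\| \le \|\epsilon_n\| + \sqrt{\omega^2\tau^2 + \sigma^2}\,\|q_n\|.
\]

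The first term tends to zero by \eqref{omega-limit-system}$_8$. The second tends to zero by \eqref{limit:qn}, which was obtained from the dissipation identity of Lemma \ref{dissipative-2} applied to the sequence in Lemma \ref{iR:vn,qn,etan}. Hence $\|\theta_{nx}\| \to 0$, which is \eqref{limit:thetan}. As a by-product, Poincar\'e's inequality on $H_0^1$ (recall $\theta_n \in H_0^1$ since $U_n \in \mathcal{D}(\mathcal{A}_2)$) also gives $\|\theta_n\| \le \sqrt{c_p}\,\|\theta_{nx}\| \to 0$. This will be useful in the subsequent steps.

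There is no real obstacle here. The only point to check is that the convergence in \eqref{omega-limit-system}$_8$ holds in the $L^2$ topology, so that it can be paired with $\|q_n\|\to 0$ directly. This is the case because the eighth component of $\mathcal{H}_2$ carries the $\tau$-weighted $L^2$ norm. Note also that, unlike Lemma \ref{lemma-theta} in the memoryless polynomial analysis, no integration over $(0,x)$ and no boundary terms are needed. There we had to control $\|\theta\|$ uniformly in large $\lambda$. Here $\omega$ is fixed, so the factor $|i\omega\tau+\sigma|$ is a harmless constant.
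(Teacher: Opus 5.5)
Your argument is correct and is essentially the paper's proof: the paper also writes $\|\theta_{nx}\| \le \|i\omega\tau q_n + \sigma q_n + \theta_{nx}\| + \|i\omega\tau q_n + \sigma q_n\|$ and concludes from \eqref{omega-limit-system}$_8$ together with \eqref{limit:qn}. Your additional Poincar\'e observation that $\|\theta_n\| \to 0$ is also correct.
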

\begin{proof}
	From \eqref{omega-limit-system}$_8$ and \eqref{limit:qn}, it follows that
	\begin{align*}
		\|\theta_{nx}\| \leq \|i\omega\tau q_n + \sigma q_n + \theta_{nx}\| + \|i\omega\tau q_n + \sigma q_n\| \to 0.
	\end{align*}
\end{proof}
\begin{lemma}\label{iR:un}
	Under Assumption \ref{assumg}, as $n \to \infty$, we have
	\begin{align}
		\|u_{nx}\| \to 0.\label{limit:un}
	\end{align}
\end{lemma}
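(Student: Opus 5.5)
We prove $\|u_{nx}\|\to 0$ by the same multiplier used in Lemma \ref{lemma-ux}: pair the second relation in \eqref{omega-limit-system} with $u_n$ in $L^2$ and feed in the limits already obtained in Lemma \ref{iR:vn,qn,etan}. Throughout, $\omega>0$ is fixed and $\|U_n\|_{\mathcal{H}_2}=1$. Hence $\|v_n\|,\|u_n\|,\|\varphi_n-u_n\|$ are bounded, with $\|u_n\|\le \sqrt{c_p}\|u_{nx}\|$ by Poincar\'e's inequality in $H_*^1$.

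\textbf{Step 1: recover $u_n$ from $v_n$.} From \eqref{omega-limit-system}$_1$, $i\omega u_n - v_n\to 0$ in $H_*^1$, hence in $L^2$. Since $\omega\neq 0$,
\[
\|u_n\|\le \frac{1}{\omega}\big(\|v_n\|+\|i\omega u_n-v_n\|\big)\to 0,
\]
using \eqref{limit:vn}.

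\textbf{Step 2: test the $u$-equation against $u_n$.} Denote by $\varepsilon_n$ the left side of \eqref{omega-limit-system}$_2$, so $\varepsilon_n\to 0$ in $L^2$. Take the $L^2$ inner product of $\varepsilon_n$ with $u_n$ and integrate by parts. The boundary terms vanish because $u_{nx}\in H_0^1$. This gives
\[
\alpha\|u_{nx}\|^2 = (\varepsilon_n,u_n) - i\omega\rho(v_n,u_n) + \beta(\varphi_n-u_n,u_n) - \gamma(v_n,u_n).
\]
Each term on the right is a product of a bounded factor and a factor tending to zero:
\begin{itemize}
\item $(\varepsilon_n,u_n)\to 0$, since $\varepsilon_n\to 0$ and $\|u_n\|$ is bounded;
\item $(v_n,u_n)\to 0$, since $\|v_n\|\to 0$ by \eqref{limit:vn} (this handles both the $i\omega\rho$ and the $\gamma$ terms);
\item $(\varphi_n-u_n,u_n)\to 0$, since $\|\varphi_n-u_n\|$ is bounded and $\|u_n\|\to 0$ by Step 1.
\end{itemize}
Therefore $\|u_{nx}\|\to 0$, which is \eqref{limit:un}.

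\textbf{Main obstacle.} The only delicate point is the coupling term $\beta(\varphi_n-u_n,u_n)$, because nothing is yet known about $\varphi_n$. It is handled by Step 1 alone, which uses $\omega\neq 0$ and so avoids any estimate on $\varphi_n$.

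Alternatively, one could substitute $i\omega u_n = v_n + o(1)$ inside the inner product, as is done in \eqref{eq2.2}. The direct bound in Step 1 is simpler.
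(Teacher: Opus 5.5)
Your proposal is correct and follows essentially the same route as the paper. Both arguments first get $\|u_n\|\to 0$ from the first relation together with $\|v_n\|\to 0$ and $\omega\neq 0$. Both then test the second relation against $u_n$, integrating by parts, and control the remaining terms, including the coupling term $\beta(\varphi_n-u_n,u_n)$, by bounded factors times $\|u_n\|$ or $\|v_n\|$.
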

\begin{proof}
	By \eqref{omega-limit-system}$_1$ and \eqref{limit:vn}, we obtain
	\begin{align*}
		\omega\|u_n\| = \|i\omega u_n\| \leq \|i\omega u_n - v_n\| + \|v_n\| \to 0.
	\end{align*}
	Thus $\|u_n\| \to 0$.
	Take the $L^2$-inner product of \eqref{omega-limit-system}$_2$ with $u_n$ and integrate by parts
	\begin{align*}
		(i\omega\rho v_n - \beta(\varphi_n - u_n) + \gamma v_n, u_n) + \alpha\|u_{nx}\|^2 \rightarrow 0.
	\end{align*}
	The first term satisfies the following estimate
	\begin{align*}
		|(i\omega\rho v_n - \beta(\varphi_n - u_n) + \gamma v_n, u_n)| \leq (( \omega\rho + \gamma ) \|v_n\| + \beta \|\varphi_n - u_n\|) \|u_n\| \leq c \|u_n\| \rightarrow 0.
	\end{align*}
	Therefore, $\|u_{nx}\| \to 0$.
\end{proof}
\begin{lemma}\label{iR:Phin}
	Under Assumption \ref{assumg}, as $n \to \infty$, we have
	\begin{align}
		\|\Phi_n\| \to 0.\label{limit:Phin}
	\end{align}
\end{lemma}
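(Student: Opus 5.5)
To prove $\|\Phi_n\|\to 0$, I would show $\|\varphi_n\|\to 0$ and use \eqref{omega-limit-system}$_3$. That relation gives $\|\Phi_n - i\omega\varphi_n\|\to 0$, so $\|\Phi_n\|\le \omega\|\varphi_n\| + o(1)$. To control $\varphi_n$, I would use the coupling term $\beta(\varphi_n-u_n)$ in the cable equation \eqref{omega-limit-system}$_2$, which carries the damping of $v_n$ over to the deck.

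\emph{Step 1.} Take the $L^2$-inner product of \eqref{omega-limit-system}$_2$ with $\varphi_n$. This is legitimate because $\varphi_n$ is bounded in $H^1_*$: $\|\varphi_{nx}\|\le \|\varphi_{nx}+\psi_n\|+\|\psi_n\|$, and both terms are bounded by the norm of $U_n$ together with Poincar\'e's inequality. Integrating by parts, which is justified since $u_{nx}\in H_0^1$, gives
\begin{align*}
(i\omega\rho v_n+\gamma v_n,\varphi_n)+\alpha(u_{nx},\varphi_{nx})-\beta(\varphi_n-u_n,\varphi_n)\to 0 .
\end{align*}
By \eqref{limit:vn} and the boundedness of $\varphi_n$, the first term tends to $0$. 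By \eqref{limit:un} and the boundedness of $\varphi_{nx}$, the second term tends to $0$. Hence $\beta(\varphi_n-u_n,\varphi_n)\to 0$.

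\emph{Step 2.} In the proof of Lemma~\ref{iR:un} we already showed $\|u_n\|\to 0$. Therefore $(u_n,\varphi_n)\to 0$, and Step 1 yields $\beta\|\varphi_n\|^2\to 0$, that is, $\|\varphi_n\|\to 0$. Combined with the estimate from \eqref{omega-limit-system}$_3$, this gives $\|\Phi_n\|\le \omega\|\varphi_n\|+\|i\omega\varphi_n-\Phi_n\|\to 0$, since $\omega>0$ is fixed.

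\emph{Main obstacle.} The delicate point is the convergence mode of the second equation. The residual in \eqref{omega-limit-system}$_2$ tends to $0$ in $L^2_*$, so pairing it with the $L^2$-bounded sequence $\varphi_n$ is harmless. After integrating by parts, however, we need $\varphi_{nx}$ to be bounded, which follows from the energy norm as noted in Step 1.

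A fallback route is also available: test \eqref{omega-limit-system}$_2$ with $1$, or use that $\alpha u_{nxx}$ is controlled. This gives $\beta(\varphi_n-u_n)\approx -\alpha u_{nxx}$ in $L^2$. Testing that relation with $\varphi_n-u_n$ leads to the same conclusion.
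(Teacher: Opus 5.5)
Your proposal is correct and is essentially the paper's proof. The paper also tests \eqref{omega-limit-system}$_2$ with $\varphi_n$ and integrates by parts. It bounds the remaining terms by $C(\|v_n\|+\|u_n\|+\|u_{nx}\|)\to 0$ to get $\|\varphi_n\|\to 0$, then passes to $\Phi_n$ via \eqref{omega-limit-system}$_3$; your justification that $\varphi_n$ is bounded in $H^1_*$ spells out a step the paper uses silently, and the fallback route is not needed.
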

\begin{proof}
	Take the $L^2$-inner product of \eqref{omega-limit-system}$_2$ with $\varphi_n$ and integrate by parts
	\begin{align}\label{jvarphin}
		(i \omega \rho v_n + \beta u_n + \gamma v_n, \varphi_n) + \alpha(u_{nx}, \varphi_{nx}) - \beta \|\varphi_n\|^2 \to 0.
	\end{align}
	The sum of the first two terms satisfies the following estimate
	\begin{align*}
		&|(i \omega \rho v_n + \beta u_n + \gamma v_n, \varphi_n) + \alpha(u_{nx}, \varphi_{nx})| \\
		&\leq (( \omega \rho + \gamma) \|v_n\| + \beta \|u_n\|) \|\varphi_n\| + \alpha \|u_{nx}\| \|\varphi_{nx}\| \\
		&\leq C(\|v_n\| + \|u_n\| + \|u_{nx}\|) \\
		&\to 0.
	\end{align*}
	Returning to \eqref{jvarphin}, we obtain: $\|\varphi_n\| \to 0$.
	Combined with \eqref{omega-limit-system}$_3$, it follows that $\|\Phi_n\| \to 0$.
\end{proof}
\begin{lemma}\label{iR:varphin}
	Under Assumption \ref{assumg}, as $n \to \infty$, we have
	\begin{align}
		\|\varphi_{nx} + \psi_n\| \to 0.\label{limit:varphin}
	\end{align}
\end{lemma}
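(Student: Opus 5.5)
We take the $L^2$-inner product of \eqref{omega-limit-system}$_7$ with $\varphi_{nx}+\psi_n$ and exploit the thermal coupling. The point is that, after using \eqref{omega-limit-system}$_{3,5}$, the mechanical term $m(\Phi_{nx}+\Psi_n)$ becomes $i\omega m(\varphi_{nx}+\psi_n)$ plus a remainder. That remainder is small in $L^2$, because $i\omega\varphi_n-\Phi_n\to0$ in $H_*^1$ gives $i\omega\varphi_{nx}-\Phi_{nx}\to 0$ in $L^2$, and $i\omega\psi_n-\Psi_n\to0$ in $H_0^1$. Since $\|\varphi_{nx}+\psi_n\|\le C\|U_n\|_{\mathcal{H}_2}=C$, we therefore obtain
\begin{align*}
i\omega\rho_3(\theta_n,\varphi_{nx}+\psi_n)+(q_{nx},\varphi_{nx}+\psi_n)+i\omega m\|\varphi_{nx}+\psi_n\|^2\to 0 .
\end{align*}
The first term tends to zero. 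By Lemma \ref{iR:thetan} and Poincar\'e's inequality (recall $\theta_n\in H_0^1$) we have $\|\theta_n\|\le \sqrt{c_p}\|\theta_{nx}\|\to0$, and $\omega$ is fixed.

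The main obstacle is the term $(q_{nx},\varphi_{nx}+\psi_n)$, since we only know $\|q_n\|\to0$ and have no control of $q_{nx}$. We integrate by parts to move the derivative onto $\varphi_{nx}+\psi_n$. The boundary terms vanish because $\varphi_{nx}\in H_0^1$ and $\psi_n\in H_0^1$ in $\mathcal{D}(\mathcal{A}_2)$. This gives $-(q_n,(\varphi_{nx}+\psi_n)_x)$. We then use \eqref{omega-limit-system}$_4$ to write
\[
\kappa(\varphi_{nx}+\psi_n)_x=i\omega\rho_1\Phi_n+\beta(\varphi_n-u_n)+m\theta_{nx}+o(1)
\]
in $L^2$. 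Every term on the right is bounded in $L^2$: $\Phi_n$ and $\varphi_n-u_n$ by the normalization $\|U_n\|_{\mathcal{H}_2}=1$, and $\theta_{nx}$ by Lemma \ref{iR:thetan}. It follows that $|(q_n,(\varphi_{nx}+\psi_n)_x)|\le C\|q_n\|\to0$ by \eqref{limit:qn}. This mirrors the treatment of $R_4$ in Lemma \ref{lemma-varphix}, except that here $\omega$ is fixed and no $\lambda$-weights need tracking. Alternatively, one can use \eqref{limit:Phin} and $\|\theta_{nx}\|\to0$ to see that these terms are even small. Only boundedness is needed, however.

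Combining the three pieces, we get $i\omega m\|\varphi_{nx}+\psi_n\|^2\to0$. Since $\omega>0$ and $m\neq0$, this yields \eqref{limit:varphin}. The only delicate points are to justify that the $o(1)$ remainders are measured in norms that pair correctly with bounded quantities, and to confirm that the boundary terms in the integration by parts vanish, both of which follow from the domain $\mathcal{D}(\mathcal{A}_2)$.
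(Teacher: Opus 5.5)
Your argument is correct, but it takes a longer route than the paper.

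The paper works only with \eqref{omega-limit-system}$_4$. Lemmas \ref{iR:thetan}--\ref{iR:Phin} already give $\|\Phi_n\|,\|\varphi_n\|,\|u_n\|,\|\theta_{nx}\|\to0$. Hence every term in $\kappa(\varphi_{nx}+\psi_n)_x=i\omega\rho_1\Phi_n+\beta(\varphi_n-u_n)+m\theta_{nx}+o(1)$ tends to zero in $L^2$, so $\|(\varphi_{nx}+\psi_n)_x\|\to0$. Poincar\'e's inequality on $H_0^1$ (valid because $\varphi_{nx},\psi_n\in H_0^1$) then finishes the proof in two lines. Your closing remark that these terms are ``even small'' is therefore already the whole proof, and the detour through the heat equation \eqref{omega-limit-system}$_7$ is not needed.

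What your version buys is independence from Lemmas \ref{iR:un} and \ref{iR:Phin}. You use only $\|q_n\|\to0$, $\|\theta_{nx}\|\to0$, and mere boundedness of $\Phi_n$, $\varphi_n-u_n$, $\theta_{nx}$. So you transfer the dissipation to the shear force through the thermal coupling, not through the cable-damping chain $v_n\to u_n\to\varphi_n\to\Phi_n$. This is the fixed-frequency analogue of the $R_4$ computation in Lemma \ref{lemma-varphix}, which is the mechanism that survives when estimates must be uniform in $\lambda$, where $\lambda\Phi$ is not small. The price is that you invoke $m\neq0$ and $\omega\neq0$ at this step; both are available here, and the paper uses them elsewhere in Step 1 anyway.
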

\begin{proof}
	Using $\|\Phi_n\|, \|\varphi_n\|, \|u_n\|, \|\theta_{nx}\| \to 0$, from \eqref{omega-limit-system}$_4$, we get
	\begin{align*}
		\|(\varphi_{nx} + \psi_n)_x\| \to 0.
	\end{align*}
	Since $\varphi_{nx} + \psi_n \in H_0^1$, Poincar\'e's inequality yields
	\begin{align*}
		\|\varphi_{nx} + \psi_n\| \leq \sqrt{c_p}\|(\varphi_{nx} + \psi_n)_x\| \to 0.
	\end{align*}
\end{proof}
\begin{lemma}\label{iR:psin,Psin}
	Under Assumption \ref{assumg}, as $n \to \infty$, we have
	\begin{align}
		\|\psi_{nx}\| \to 0,\label{limit:psin}\\
		\|\Psi_{nx}\| \to 0.\label{limit:Psin}
	\end{align}
\end{lemma}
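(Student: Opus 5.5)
We need to show $\|\psi_{nx}\|\to0$ and $\|\Psi_{nx}\|\to0$. Note that $\Psi_{nx}$ appears in no norm of $\mathcal{H}_2$, so the second claim must come from the memory equation; in fact what the later argument really needs is $\|\Psi_n\|\to 0$, and I would obtain both from the $\eta$-equation together with the already established decay of $\int_0^\infty g(s)\|\eta_{nx}\|^2ds$.

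\emph{Step 1: $\Psi_n$ from the memory equation.} Take the $L_g^2(\mathbb{R}^+;H_0^1)$-inner product of \eqref{omega-limit-system}$_9$ with $\eta_n$. This gives
\[
i\omega\int_0^\infty g\|\eta_{nx}\|^2ds-\int_0^\infty g(s)(\Psi_{nx},\eta_{nx})ds+\int_0^\infty g(s)(\eta_{nsx},\eta_{nx})ds\to0 .
\]
\begin{itemize}
\item The first term tends to zero by \eqref{limit:g}.
\item For the third term, take the real part and integrate by parts in $s$ using $\eta_n(0)=0$. The real part equals $-\tfrac12\int_0^\infty g'(s)\|\eta_{nx}\|^2ds$, which tends to zero by \eqref{limit:g'}. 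The boundary term at infinity vanishes because of the exponential decay of $g$.
\end{itemize}
This only controls a real part, which is weak. So I would rather use \eqref{omega-limit-system}$_5$ first, writing $\Psi_n=i\omega\psi_n+o(1)$ in $H_0^1$.

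\emph{Step 2: $\psi_{nx}$.} Take the $L^2$-inner product of \eqref{omega-limit-system}$_6$ with $\psi_n$ and integrate by parts:
\[
-\rho_2\|\Psi_n\|^2+\tilde b\|\psi_{nx}\|^2+\int_0^\infty g(s)(\eta_{nx},\psi_{nx})ds+\kappa(\varphi_{nx}+\psi_n,\psi_n)-m(\theta_n,\psi_n)\to0 .
\]
Here $i\omega\rho_2(\Psi_n,\psi_n)=-\rho_2(\Psi_n,i\omega\psi_n)$ is replaced by $-\rho_2\|\Psi_n\|^2+o(1)$ via \eqref{omega-limit-system}$_5$. The remaining terms are handled as follows.
\begin{itemize}
\item The memory term is bounded by $b_0^{1/2}\big(\int_0^\infty g\|\eta_{nx}\|^2ds\big)^{1/2}\|\psi_{nx}\|\to0$.
\item The $\kappa$ term tends to zero by \eqref{limit:varphin}.
\item The $\theta$ term tends to zero since $\|\theta_n\|\le\sqrt{c_p}\|\theta_{nx}\|\to0$ by \eqref{limit:thetan}.
\end{itemize}
Hence $\tilde b\|\psi_{nx}\|^2-\rho_2\|\Psi_n\|^2\to0$. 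We still need a second relation.

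\emph{Step 3: second relation from $\eta$.} Take the $L_g^2$-inner product of \eqref{omega-limit-system}$_9$ with $\psi_n$, regarded as a constant in $s$; this is the key step. It gives
\[
i\omega\int_0^\infty g(\eta_{nx},\psi_{nx})ds-b_0(\Psi_{nx},\psi_{nx})+\int_0^\infty g(\eta_{nsx},\psi_{nx})ds\to0 .
\]
\begin{itemize}
\item The first term tends to zero by \eqref{limit:g}.
\item For the third term, integrate by parts in $s$: it equals $-\int_0^\infty g'(s)(\eta_{nx},\psi_{nx})ds$. Since $|g'|$ is integrable, this is bounded by $\big(\int_0^\infty|g'|\|\eta_{nx}\|^2ds\big)^{1/2}\big(\int_0^\infty|g'|ds\big)^{1/2}\|\psi_{nx}\|\to0$ using \eqref{limit:g'}, as $\int_0^\infty|g'|ds=g(0)$.
\end{itemize}
Hence $b_0(\Psi_{nx},\psi_{nx})\to0$. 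Substituting $\Psi_{nx}=i\omega\psi_{nx}+o(1)$ (the $H_0^1$ convergence from \eqref{omega-limit-system}$_5$) gives $i\omega b_0\|\psi_{nx}\|^2\to0$. Since $\omega>0$ and $b_0>0$ (for nontrivial $g$), this yields \eqref{limit:psin}. Then $\|\Psi_{nx}\|\le\|\Psi_{nx}-i\omega\psi_{nx}\|+\omega\|\psi_{nx}\|\to0$, which is \eqref{limit:Psin}. Step 2 becomes unnecessary except as a consistency check, and Poincar\'e's inequality further gives $\|\Psi_n\|\to0$.

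The main obstacle is justifying the $s$-integration by parts in Step 3 for $U_n$ in the domain: one needs the boundary term $g(s)(\eta_{nx}(s),\psi_{nx})$ to vanish as $s\to\infty$ and at $s=0$. It vanishes at $s=0$ because $\eta_n(0)=0$. At infinity I would handle it by integrating on $[0,N]$ and choosing $N_k\to\infty$ along which $g(N_k)\|\eta_{nx}(N_k)\|^2\to0$. Such a sequence exists since $\eta_n,\eta_{ns}\in L_g^2$, exactly as in the proof of Lemma \ref{maximal-2}.
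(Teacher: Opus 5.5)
Your proof is correct, but it reaches $\|\psi_{nx}\|\to0$ by a different mechanism than the paper. Like your Step 2, the paper tests the sixth line of \eqref{omega-limit-system} with $\psi_n$. It then removes the inertial term $i\omega\rho_2(\Psi_n,\psi_n)$ using the heat equation: testing the seventh line with $\psi_n$ and integrating by parts gives $m(\Psi_n,\psi_n)\to0$, because $\theta_n$, $q_n$, $\Phi_n\to0$ by Lemmas \ref{iR:vn,qn,etan}--\ref{iR:Phin}. From this, $\tilde b\|\psi_{nx}\|^2\to0$, and $\|\Psi_{nx}\|\to0$ follows from the fifth line exactly as in your last step.

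Your Step 3 instead tests the history equation with $\psi_n$, viewed as constant in $s$. This isolates $b_0(\Psi_{nx},\psi_{nx})=i\omega b_0\|\psi_{nx}\|^2+o(1)$, and everything else is controlled by \eqref{limit:g} and \eqref{limit:g'} alone. What this buys: you need neither the chain of Lemmas \ref{iR:thetan}--\ref{iR:varphin} nor the thermal coupling $m\neq0$, and the argument shows that the memory by itself kills the rotational component. What it costs: you need $b_0>0$, i.e.\ $g\not\equiv0$ (which you flag, and which the paper's argument for this lemma does not use), you need $\omega\neq0$, and you need an integration by parts in $s$.

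Your subsequence treatment of the boundary term at infinity is fine, and in fact the full limit holds. For $\eta$ in the domain,
\[
2\int_0^N g(s)\,{\rm Re}(\eta_{sx},\eta_x)\,ds=g(N)\|\eta_x(N)\|^2-\int_0^N g'(s)\|\eta_x(s)\|^2\,ds .
\]
The left side converges as $N\to\infty$, and both terms on the right are nonnegative. Hence $-\int_0^\infty g'\|\eta_x\|^2ds<\infty$, which is also what makes your bound on the $g'$-term legitimate. It also follows that $g(N)\|\eta_x(N)\|^2$ converges, necessarily to $0$, since $g\|\eta_x\|^2\in L^1(\mathbb{R}^+)$. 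Steps 1 and 2 can simply be dropped.
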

\begin{proof}
	Take the $L^2$-inner product of \eqref{omega-limit-system}$_6$ with $\psi_n$ and integrate by parts
	\begin{align}\label{eq:psin}
		i\omega\rho_2(\Psi_n, \psi_n) + \dt b\|\psi_{nx}\|^2 + (\kappa(\varphi_{nx} + \psi_n) - m\theta_n, \psi_n) + \left(\int_{0}^{\infty} g(s)\eta_{nx}(s)ds,  \psi_{nx}\right) \to 0.
	\end{align}
	First, estimate the term $(\kappa(\varphi_{nx} + \psi_n) - m\theta_n, \psi_n)$
	\begin{align*}
		\left|(\kappa(\varphi_{nx} + \psi_n) - m\theta_n, \psi_n)\right| \leq (\kappa \|\varphi_{nx} + \psi_n\|  + |m|\|\theta_n\|)\|\psi_{n}\|
		\leq C(\|\varphi_{nx} + \psi_n\| + \|\theta_n\|)
		\to 0.
	\end{align*}
	Second, estimate the term $\left(\int_{0}^{\infty} g(s)\eta_{nx}(s)ds,  \psi_{nx}\right)$
	\begin{align*}
		\left|\left(\int_{0}^{\infty} g(s)\eta_{nx}(s)ds,  \psi_{nx}\right)\right| \leq \left\|\int_{0}^{\infty} g(s)\eta_{nx}(s)ds\right\| \|\psi_{nx}\|
		\leq b_0^{\frac{1}{2}} \|\eta_n\|_{L^2_g(\mathbb{R}^+; H_0^1)} \|\psi_{nx}\|
		\leq C \|\eta_n\|_{L^2_g(\mathbb{R}^+; H_0^1)}
		\to 0.
	\end{align*}
	Third, estimate the term $(\Psi_n, \psi_n)$. Taking the $L^2$-inner product of \eqref{omega-limit-system}$_7$ with $\psi_n$, we obtain
	\begin{align}\label{eq:Psin,psin}
		i\omega\rho_3(\theta_n, \psi_n) - (q_n, \psi_{nx}) - m(\Phi_n, \psi_{nx}) + m(\Psi_n, \psi_n) \to 0.
	\end{align}
	From \eqref{eq:Psin,psin}, \eqref{limit:thetan}, \eqref{limit:qn}, and \eqref{limit:Phin}, we derive
	\begin{align*}
		|m(\Psi_n, \psi_n)| &\leq |i\omega\rho_3(\theta_n, \psi_n) - (q_n, \psi_{nx}) - m(\Phi_n, \psi_{nx}) + m(\Psi_n, \psi_n)|  + |i\omega\rho_3(\theta_n, \psi_n) - (q_n, \psi_{nx}) - m(\Phi_n, \psi_{nx})| \\
		&\leq |i\omega\rho_3(\theta_n, \psi_n) - (q_n, \psi_{nx}) - m(\Phi_n, \psi_{nx}) + m(\Psi_n, \psi_n)|  + \omega\rho_3 \|\theta_n\|\|\psi_n\| + \|q_n\|\|\psi_{nx}\| + |m|\|\Phi_n\|\|\psi_{nx}\| \\
		&\to 0.
	\end{align*}
	Returning to \eqref{eq:psin} yields $\|\psi_{nx}\| \to 0$. $\|\Psi_{nx}\| \to 0$ is a direct consequence of \eqref{omega-limit-system}$_5$ and \eqref{limit:psin}.
\end{proof}

From Lemmas \ref{iR:vn,qn,etan}-\ref{iR:psin,Psin}, it follows that $\|U_n\|_{\mathcal{H}_2} \to 0$, contradicting $\|U_n\|_{\mathcal{H}_2} = 1$. Thus, we derive $i \mathbb{R} \subset \varrho(\mathcal{A}_2)$.

\textbf{Step2: Uniform Boundedness of $(i\lambda I-\mathcal{A}_2)^{-1}$.} Consider the following resolvent equation
\begin{align}\label{resolvent eq}
	(i\lambda I-\mathcal{A}_2)U=F := (f_1,f_2,f_3,f_4,f_5,f_6,f_7,f_8,f_9) \in \mathcal{H}_2,
\end{align}
where $U=(u,v,\varphi,\Phi,\psi,\Psi,\theta,q,\eta) \in \mathcal{D}(\mathcal{A}_2)$. From the definition of $\mathcal{A}_2$ \eqref{definition:A-2}, equation \eqref{resolvent eq} gives the following system

\begin{align}\label{Resolvent equations-2}
	\begin{cases}
		i\lambda u - v=f_1   \quad &\text{in } H_*^1,\\
		i\lambda \rho v -\alpha u_{xx}-\beta(\varphi - u)+ \gamma v =\rho f_2 \quad &\text{in } L_*^2,\\
		i\lambda\varphi - \Phi =f_3\quad &\text{in } H_*^1,\\
		i\lambda\rho_1 \Phi - \kappa(\varphi_{x}+\psi)_x + \beta (\varphi - u) +  m\theta_{x}=\rho_1f_4  \quad &\text{in } L_*^2,\\
		i\lambda \psi - \Psi =f_5 \quad &\text{in } H_0^1,\\
		i\lambda \rho_2 \Psi - \dt b \psi_{xx} + \kappa(\varphi_{x} + \psi) - \int_{0}^{\infty} g(s) \eta_{xx}(s) ds - m\theta =\rho_2f_6 \quad &\text{in } L^2,\\
		i\lambda \rho_3 \theta + q_x + m(\Phi_{x} + \Psi) =\rho_3f_7 \quad &\text{in } L^2,\\
		i\lambda \tau q + \sigma q + \theta_x = \tau f_8 \quad &\text{in } L^2, \\
		i\lambda \eta + \eta_s - \Psi=f_9 \quad &\text{in } L_g^2(\mathbb{R}^+;H_0^1).
	\end{cases}
\end{align}
\begin{lemma}\label{boundedness:v,q,eta}
	Under Assumption \ref{assumg}, the following estimates hold
	\begin{align}
		\|v\|^2 \leq \frac{1}{\gamma}\|U\|_{\mathcal{H}_2}\|F\|_{\mathcal{H}_2}, \label{v}\\
		\|q\|^2 \leq \frac{1}{\sigma}\|U\|_{\mathcal{H}_2}\|F\|_{\mathcal{H}_2}, \label{q}\\
		-\int_{0}^{\infty}g'(s)\|\eta_x(s)\|^2ds \leq 2\|U\|_{\mathcal{H}_2}\|F\|_{\mathcal{H}_2},\label{g'}\\
		\|\eta\|_{L_g^2(\mathbb{R}^+; H_0^1) }^2 \leq \frac{2}{\kappa_1}\|U\|_{\mathcal{H}_2}\|F\|_{\mathcal{H}_2}.\label{g}
	\end{align}
\end{lemma}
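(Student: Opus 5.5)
The plan is to follow the route of Lemma \ref{lemma-v,q}, now using the dissipation identity established in Lemma \ref{dissipative-2}. First, take the $\mathcal{H}_2$-inner product of the resolvent equation \eqref{resolvent eq} with $U$ and keep the real part. Since ${\rm Re}(i\lambda U,U)_{\mathcal{H}_2}=0$, the computation in the proof of Lemma \ref{dissipative-2} gives
\begin{align*}
{\rm Re}(F,U)_{\mathcal{H}_2} = -{\rm Re}(\mathcal{A}_2U,U)_{\mathcal{H}_2} = \gamma\|v\|^2+\sigma\|q\|^2-\frac12\int_0^\infty g'(s)\|\eta_x(s)\|^2ds .
\end{align*}
The key point is that each of the three terms on the right is nonnegative, since $g'\le -\kappa_1 g\le 0$ by Assumption \ref{assumg}. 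The left-hand side is bounded by $\|F\|_{\mathcal{H}_2}\|U\|_{\mathcal{H}_2}$ via Cauchy--Schwarz.

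We then drop, in turn, all but one of the nonnegative terms. This gives \eqref{v} and \eqref{q} immediately. It also gives $-\frac12\int_0^\infty g'\|\eta_x\|^2ds\le \|U\|_{\mathcal{H}_2}\|F\|_{\mathcal{H}_2}$, which is \eqref{g'}.

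For \eqref{g}, we use $g(s)\le -\kappa_1^{-1}g'(s)$ pointwise from Assumption \ref{assumg}. This yields
\begin{align*}
\|\eta\|^2_{L_g^2(\mathbb{R}^+;H_0^1)}=\int_0^\infty g(s)\|\eta_x(s)\|^2ds\le -\frac{1}{\kappa_1}\int_0^\infty g'(s)\|\eta_x(s)\|^2ds\le \frac{2}{\kappa_1}\|U\|_{\mathcal{H}_2}\|F\|_{\mathcal{H}_2}.
\end{align*}

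The only delicate point is justifying the integration by parts in $s$ that produces the $g'$ term in Lemma \ref{dissipative-2}. This is the identity ${\rm Re}\int_0^\infty g(s)(-\eta_{sx},\eta_x)\,ds=\frac12\int_0^\infty g'(s)\|\eta_x\|^2ds$, and it needs the boundary terms to vanish. At $s=0$ this follows from $\eta(\cdot,0)=0$ in $\mathcal{D}(\mathcal{A}_2)$. At $s\to\infty$ we need $g(s)\|\eta_x(s)\|^2\to 0$, at least along a sequence $s\to\infty$, which should follow from $\eta,\eta_s\in L_g^2(\mathbb{R}^+;H_0^1)$ together with the exponential decay of $g$. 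This is already accepted in Lemma \ref{dissipative-2}, so here we simply invoke that lemma.
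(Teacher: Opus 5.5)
Your proposal is correct and follows essentially the same route as the paper. You take the real part of the $\mathcal{H}_2$-inner product of $(i\lambda I-\mathcal{A}_2)U=F$ with $U$, use the dissipation identity from Lemma \ref{dissipative-2} and Cauchy--Schwarz to bound each nonnegative term by $\|U\|_{\mathcal{H}_2}\|F\|_{\mathcal{H}_2}$, and pass from \eqref{g'} to the $L_g^2(\mathbb{R}^+;H_0^1)$ bound via $g\le-\kappa_1^{-1}g'$. Your remark on the boundary terms in the $s$-integration by parts supplies a justification the paper leaves implicit; note that $\eta\in L_g^2(\mathbb{R}^+;H_0^1)$ alone already gives $\liminf_{s\to\infty}g(s)\|\eta_x(s)\|^2=0$, so the exponential decay of $g$ is not needed there.
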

\begin{proof}
	Taking the $\mathcal{H}_2$-inner product of $(i\lambda I-\mathcal{A}_2)U=F$ with $U$, we derive
	\begin{align*}
		\|F\|_{\mathcal{H}_2}\|U\|_{\mathcal{H}_2}
		&\geq |\operatorname{Re}(F,U)_{\mathcal{H}_2}| = |\operatorname{Re}(i\lambda U-\mathcal{A}_2U,U)_{\mathcal{H}_2}| \\
		& = |-\operatorname{Re}(\mathcal{A}_2U,U)_{\mathcal{H}_2}| = \gamma \|v\|^2 + \sigma \|q\|^2 - \frac{1}{2}\int_{0}^{\infty} g'(s)\|\eta_x(s)\|^2ds.
	\end{align*}
	Estimates \eqref{v}--\eqref{g'} follow immediately. Estimate \eqref{g} is derived from \eqref{g'} using Assumption \ref{assumg}.
\end{proof}

Following the same procedure as in the proofs of Lemmas \ref{lemma-ux}-\ref{lemma-Phi}, we obtain the following lemma\\
\begin{lemma}\label{boundedness:ux}
	Under Assumption \ref{assumg}, for any $\epsilon>0$, there exists a constant $C_{\epsilon}>0$ (independent of $\lambda$) such that
	\begin{align}
		\|u_x\|^2, \|\theta\|^2, \|\varphi_x + \psi\|^2, \|\varphi - u\|^2, \|\Phi\|^2 \leq \epsilon\|U\|_{\mathcal{H}_2}^2 + C_{\epsilon}\|F\|_{\mathcal{H}_2}^2
	\end{align}
	for $|\lambda| \geq 1$.
\end{lemma}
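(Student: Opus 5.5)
The plan is to repeat, line by line, the multiplier computations of Lemmas \ref{lemma-ux}--\ref{lemma-Phi} on the resolvent system \eqref{Resolvent equations-2}, with Lemma \ref{boundedness:v,q,eta} taking the place of Lemma \ref{lemma-v,q}. The first, second, third, fourth, seventh and eighth equations of \eqref{Resolvent equations-2} are identical to the corresponding ones in \eqref{Resolvent equations}. The memory term only enters the sixth equation, and the sixth equation is never used in those earlier lemmas. The coercive constant $b$ is replaced by $\tilde b$ in the norm, but this does not affect the quantities being estimated. So every identity derived before remains valid verbatim, with $\|\cdot\|_{\mathcal{H}_1}$ replaced by $\|\cdot\|_{\mathcal{H}_2}$, and the additional $\eta$-component of $U$ only enlarges $\|U\|_{\mathcal{H}_2}$, so all bounds of the form $C\|U\|\|F\|$ persist.

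Concretely, I would proceed in the same order as before.
\begin{enumerate}
\item For $u_x$, multiply the second equation by $u$ and use the first equation, exactly as in Lemma \ref{lemma-ux}. This gives $\|u_x\|^2\le C(\|U\|\|F\|+\|v\|\|\varphi\|)$ for $|\lambda|\ge1$, and then \eqref{v} together with Young's inequality yields $\epsilon\|U\|^2+C_\epsilon\|F\|^2$.
\item For $\theta$, integrate the eighth equation over $(0,x)$, pair the result with $\theta$, and eliminate $i\lambda\theta$ and $i\lambda q$ through the seventh and eighth equations, following Lemma \ref{lemma-theta}. This gives $\|\theta\|^2\le C(\|U\|\|F\|+\|q\|\|\Psi\|+\|q\|\|\Phi\|)$, and \eqref{q} closes it.
\item For $\varphi_x+\psi$, pair $i\lambda\rho_3\theta+q_x+i m\lambda(\varphi_x+\psi)=\dots$ with $\varphi_x+\psi$. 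Then replace $\kappa(\varphi_x+\psi)_x$ using the fourth equation and $\theta_x$ using the eighth, as in Lemma \ref{lemma-varphix}; the estimate closes using step 2 and \eqref{q}.
\item For $\varphi-u$, pair the second equation with $\varphi-u$ (Lemma \ref{lemma-varphi-u}).
\item For $\Phi$, pair the fourth equation with $\varphi$ (Lemma \ref{lemma-Phi}), inserting the bounds from steps 2--4.
\end{enumerate}

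Throughout, the $\epsilon$-absorption uses only $\|v\|^2,\|q\|^2\le C\|U\|\|F\|$ and $ab\le \epsilon a^2+C_\epsilon b^2$.

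The one point requiring attention is the boundary term $R_2$ and the constant-mean manipulations in step 2. These rely on $\theta\in H_0^1$ and on $\Phi,v\in L^2_*$, which remain true for $U\in\mathcal{D}(\mathcal{A}_2)$ since the domain conditions on these components are unchanged. I would verify this explicitly and then conclude that the constants $C_\epsilon$ are independent of $\lambda$ for $|\lambda|\ge1$. No real obstacle is expected. The only genuine work is confirming that no earlier step secretly used the sixth equation or the norm component $b\|\psi_x\|^2$, and I expect this check to pass.
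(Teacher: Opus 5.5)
Your proposal is correct and is exactly the paper's argument. The paper proves this lemma in one line by repeating the proofs of Lemmas \ref{lemma-ux}--\ref{lemma-Phi} with Lemma \ref{boundedness:v,q,eta} in place of Lemma \ref{lemma-v,q}. The check you flag does pass: those computations use only the first through fourth, seventh and eighth resolvent equations, together with $\theta\in H_0^1$ and $\varphi_x+\psi\in H_0^1$. None of these is affected by the memory term, by $\tilde b$ replacing $b$, or by $\psi$ no longer being required to lie in $H^2$ in $\mathcal{D}(\mathcal{A}_2)$.
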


\begin{lemma}\label{boundedness:Psi}
	Under Assumption \ref{assumg}, for any $\epsilon>0$, there exists a constant $C_{\epsilon}>0$ (independent of $\lambda$) such that
	\begin{align}
		 \|\Psi\|^2 \leq \epsilon\|U\|_{\mathcal{H}_2}^2 + C_{\epsilon}\|F\|_{\mathcal{H}_2}^2
	\end{align}
	for $|\lambda| \geq 1$.
\end{lemma}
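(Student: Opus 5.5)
We need to bound $\|\Psi\|^2$ for the memory system, uniformly in $\lambda$. The memory term supplies the dissipation on $\psi$, so the natural multiplier is $\int_0^\infty g(s)\eta(s)\,ds$.

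\textbf{Step 1 (multiplier identity).} Take the $L^2$-inner product of \eqref{Resolvent equations-2}$_6$ with $\int_0^\infty g(s)\eta(s)\,ds$ and integrate by parts; this is legitimate since $\eta(s)\in H_0^1$. The first term $i\lambda\rho_2(\Psi,\int_0^\infty g\eta\,ds)$ is rewritten using \eqref{Resolvent equations-2}$_9$ in the form $\overline{i\lambda\eta}=-\overline{\eta_s}+\overline{\Psi}+\overline{f_9}$. This gives
\[
i\lambda\rho_2\Big(\Psi,\int_0^\infty g\eta\,ds\Big)=-\rho_2\Big(\Psi,\int_0^\infty g\,(-\eta_s+\Psi+f_9)\,ds\Big).
\]
For the $\eta_s$ contribution we integrate by parts in $s$. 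Since $\eta(0)=0$ and $g(s)\eta(s)\to0$, we get $\int_0^\infty g\eta_s\,ds=-\int_0^\infty g'\eta\,ds$. Isolating the $\Psi$ contribution then yields
\[
\rho_2 b_0\|\Psi\|^2=\rho_2\Big(\Psi,\int_0^\infty g'\eta\,ds\Big)-\rho_2\Big(\Psi,\int_0^\infty gf_9\,ds\Big)+(\text{remaining terms of eq. 6 tested against }\textstyle\int g\eta).
\]

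\textbf{Step 2 (remaining terms).} The remaining terms are
\[
\tilde b\Big(\psi_x,\int_0^\infty g\eta_x\,ds\Big),\quad \Big\|\int_0^\infty g\eta_x\,ds\Big\|^2,\quad \kappa\Big(\varphi_x+\psi,\int_0^\infty g\eta\,ds\Big),\quad m\Big(\theta,\int_0^\infty g\eta\,ds\Big),\quad \rho_2\Big(f_6,\int_0^\infty g\eta\,ds\Big).
\]
Each is handled with Cauchy--Schwarz in the form $\|\int_0^\infty g\eta_x\,ds\|\le b_0^{1/2}\|\eta\|_{L^2_g}$, together with Poincaré. By \eqref{g} each is then bounded by
\[
C\|U\|_{\mathcal H_2}\|\eta\|_{L^2_g}+C\|U\|\|F\|\le \epsilon\|U\|^2+C_\epsilon\|U\|\|F\|\le \epsilon\|U\|^2+C_\epsilon\|F\|^2 .
\]
The $g'$ term is estimated as
\[
\Big|\Big(\Psi,\int_0^\infty g'\eta\,ds\Big)\Big|\le\|\Psi\|\Big(\int_0^\infty|g'|\,ds\Big)^{1/2}\Big(\int_0^\infty -g'\|\eta\|^2\,ds\Big)^{1/2}.
\]
Here $\int_0^\infty|g'|\,ds=g(0)<\infty$, since $g'\le0$ and $g\in L^1$ force $g(\infty)=0$. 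By Poincaré and \eqref{g'} this is at most $C\|\Psi\|(\|U\|\|F\|)^{1/2}$. Young's inequality then absorbs it as $\frac{\rho_2b_0}{4}\|\Psi\|^2+C\|U\|\|F\|$. The $f_9$ term is bounded by $C\|\Psi\|\|f_9\|_{L^2_g}$, again with Poincaré.

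\textbf{Main obstacle.} The hardest term is $\tilde b(\psi_x,\int_0^\infty g\eta_x\,ds)$, because $\|\psi_x\|$ has not been shown small. It is nonetheless bounded by $C\|U\|\,\|\eta\|_{L^2_g}\le C\|U\|(\|U\|\|F\|)^{1/2}$, and then
\[
C\|U\|(\|U\|\|F\|)^{1/2}\le\epsilon\|U\|^2+C_\epsilon\|U\|\|F\|\le 2\epsilon\|U\|^2+C_\epsilon'\|F\|^2,
\]
so it is harmless. The hypothesis $|\lambda|\ge1$ does not enter this argument, apart from consistency with Lemma~\ref{boundedness:ux}. Dividing by $\rho_2b_0>0$ gives the claimed estimate; note $b_0>0$ unless $g\equiv0$, which is implicitly excluded.
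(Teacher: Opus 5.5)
Your proof is correct, but it takes a different route from the paper.

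\textbf{The paper's route.} The paper tests \eqref{Resolvent equations-2}$_6$ with $\Psi$ itself. This gives the identity \eqref{hh}, whose leading terms $i\lambda\rho_2\|\Psi\|^2$ and $\tilde b(\psi_x,\Psi_x)$ are of order $|\lambda|$. It then multiplies by $b_0$ and rewrites $b_0\tilde b(\psi_x,\Psi_x)=\tilde b\int_0^\infty g(s)(\psi_x,\Psi_x)\,ds$. There it substitutes the history equation $\Psi=i\lambda\eta+\eta_s-f_9$. In the memory term $\int_0^\infty g(\eta_x,\Psi_x)\,ds$ it substitutes $\Psi=i\lambda\psi-f_5$ instead. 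Both terms are then bounded by $|\lambda|(\epsilon\|U\|^2+C_\epsilon\|F\|^2)$, and the paper divides by $|\lambda|$. That division is where $|\lambda|\ge1$ enters. The leftover coupling terms $(\varphi_x+\psi,\Psi)$ and $(\theta,\Psi)$ are closed with Lemma~\ref{boundedness:ux}.

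\textbf{Your route.} You use the classical history multiplier $\int_0^\infty g\eta\,ds$ and apply the history equation to the inertial term. As a result, $\rho_2 b_0\|\Psi\|^2$ appears at order one. Every other term is a product of $\|U\|$ or $\|F\|$ with one of the dissipated quantities $\|\eta\|_{L^2_g}$ or $\bigl(-\int_0^\infty g'\|\eta_x\|^2\,ds\bigr)^{1/2}$.

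\textbf{What each buys.} Your estimate holds for every real $\lambda$ and does not depend on Lemma~\ref{boundedness:ux}, so it is more self-contained. The paper's choice is more economical: the same identity \eqref{hh} is reused in Lemma~\ref{boundedness:psix} for $\|\psi_x\|$. With your multiplier you still need that step, but it goes through unchanged using your bound on $\|\Psi\|$.

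\textbf{Shared hypotheses.} Both arguments rely on the same implicit assumptions:
\begin{itemize}
\item $b_0>0$;
\item $g(0)<\infty$;
\item $g(s)\eta(s)\to0$ as $s\to\infty$, which justifies the integration by parts in $s$.
\end{itemize}

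\textbf{A small slip.} After you isolate $\rho_2 b_0\|\Psi\|^2$, the $g'$ term should read $-\rho_2\bigl(\Psi,\int_0^\infty g'\eta\,ds\bigr)$. The sign is immaterial, since you only use its absolute value.
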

\begin{proof}
	Take the $L^2$-inner product of \eqref{Resolvent equations-2}$_6$ with $\Psi$ and integrate by parts
	\begin{align}\label{hh}
		i\lambda\rho_2 \|\Psi\|^2 + \dt b(\psi_x, \Psi_x) + \kappa(\varphi_x + \psi, \Psi) + \int_{0}^{\infty} g(s)(\eta_x, \Psi_x) ds - m(\theta, \Psi) = \rho_2 (f_6, \Psi).
	\end{align}
	Multiplying both sides of the above equation by $ g(s) $ and integrating with respect to $ s $ from $(0,\infty)$, we obtain
	\begin{align}\label{eq:Psi}
		 i\lambda\rho_2 b_0 \|\Psi\|^2 + \dt b(\psi, \Psi)_{L_g^2(\mathbb{R}^+; H_0^1)} + \kappa b_0(\varphi_x + \psi, \Psi) + b_0 (\eta, \Psi)_{L_g^2(\mathbb{R}^+; H_0^1)} - m b_0 (\theta, \Psi) = \rho_2 b_0(f_6, \Psi).
	\end{align}
	First, we estimate the term $	(\psi, \Psi)_{L_g^2(\mathbb{R}^+; H_0^1)}$. Using \eqref{Resolvent equations-2}$_9$, we obtain
	\begin{align*}
		(\psi, \Psi)_{L_g^2(\mathbb{R}^+; H_0^1)} &= (\psi, i\lambda \eta + \eta_s - f_9)_{L_g^2(\mathbb{R}^+; H_0^1)} \\
		&= -i\lambda (\psi, \eta)_{L_g^2(\mathbb{R}^+; H_0^1)} + (\psi, \eta_s)_{L_g^2(\mathbb{R}^+; H_0^1)} - (\psi, f_9)_{L_g^2(\mathbb{R}^+; H_0^1)} \\
		&= -i\lambda (\psi, \eta)_{L_g^2(\mathbb{R}^+; H_0^1)} - \int_0^\infty g'(s)(\psi_x, \eta_x) ds - (\psi, f_9)_{L_g^2(\mathbb{R}^+; H_0^1)}.
	\end{align*}
	By H\"older's and Young's inequalities, using the estimates \eqref{g} and
	\eqref{g'} yields
	\begin{align*}
		|(\psi, \Psi)_{L_g^2(\mathbb{R}^+; H_0^1)}| &\leq |\lambda| \|\psi\|_{L_g^2(\mathbb{R}^+; H_0^1)}\|\eta\|_{L_g^2(\mathbb{R}^+; H_0^1)} - \int_0^\infty g'(s) \|\eta_x\| ds \|\psi_x\| + \|\psi\|_{L_g^2(\mathbb{R}^+; H_0^1)} \|f_9\|_{L_g^2(\mathbb{R}^+; H_0^1)} \\
		&\leq |\lambda| b_0^{\frac{1}{2}} \|\psi_x\| \|\eta\|_{L_g^2(\mathbb{R}^+; H_0^1)} + g(0)^{\frac{1}{2}} \left( \int_0^\infty (-g'(s)) \|\eta_x\|^2 ds \right) ^{\frac{1}{2}} \|\psi_x\| +  b_0^{\frac{1}{2}}\|\psi_x\|\|f_9\|_{L_g^2(\mathbb{R}^+; H_0^1)}\\
		&\leq |\lambda| \left( \epsilon \|U\|_{\mathcal{H}_2}^2 + C_{\epsilon} \|F\|_{\mathcal{H}_2}^2 \right).
	\end{align*}
	Second, we estimate the term $(\eta, \Psi)_{L_g^2(\mathbb{R}^+; H_0^1)}$. From equation \eqref{Resolvent equations-2}$_5$, it follows that
	\begin{align*}
		|(\eta, \Psi)_{L_g^2(\mathbb{R}^+; H_0^1)}| &= |(\eta, i\lambda\psi - f_5)_{L_g^2(\mathbb{R}^+; H_0^1)}| \\
		&\leq \|\eta\|_{L_g^2(\mathbb{R}^+; H_0^1)} \|i\lambda\psi - f_5\|_{L_g^2(\mathbb{R}^+; H_0^1)} \\
		&\leq \|\eta\|_{L_g^2(\mathbb{R}^+; H_0^1)} (|\lambda| \|\psi\|_{L_g^2(\mathbb{R}^+; H_0^1)} + \|f_5\|_{L_g^2(\mathbb{R}^+; H_0^1)}) \\
		&= b_0^{\frac{1}{2}} |\lambda| \|\eta\|_{L_g^2(\mathbb{R}^+; H_0^1)} \|\psi_x\| + b_0^{\frac{1}{2}} \|\eta\|_{L_g^2(\mathbb{R}^+; H_0^1)} \|f_{5x}\| \\
		&\leq |\lambda| \left( \epsilon \|U\|_{\mathcal{H}_2}^2 + C_{\epsilon} \|F\|_{\mathcal{H}_2}^2 \right).
	\end{align*}
	Returning to equation \eqref{eq:Psi}, we get $\|\Psi\|^2 \leq \epsilon\|U\|_{\mathcal{H}_2}^2 + C_{\epsilon}\|F\|_{\mathcal{H}_2}^2$, for $|\lambda| \geq 1$ immediately.
\end{proof}

\begin{lemma}\label{boundedness:psix}
	Under Assumption \ref{assumg}, for any $\epsilon>0$, there exists a constant $C_{\epsilon}>0$ (independent of $\lambda$) such that
	\begin{align}
		\|\psi_x\|^2 \leq \epsilon\|U\|_{\mathcal{H}_2}^2 + C_{\epsilon}\|F\|_{\mathcal{H}_2}^2
	\end{align}
	for $|\lambda| \geq 1$.
\end{lemma}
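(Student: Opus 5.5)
We will test the sixth resolvent equation \eqref{Resolvent equations-2}$_6$ against $\psi$ in $L^2$, exactly as in the proof of Lemma \ref{lemma-Psi}, but now keeping the memory term. Integrating by parts (using $\psi\in H_0^1$ and $\eta(s)\in H_0^1$) and substituting $i\lambda\psi=\Psi+f_5$ from \eqref{Resolvent equations-2}$_5$ gives
\begin{align*}
\dt b\|\psi_x\|^2 = \rho_2\|\Psi\|^2 + \rho_2(\Psi,f_5) - \kappa(\varphi_x+\psi,\psi) + m(\theta,\psi) - \int_0^\infty g(s)(\eta_x(s),\psi_x)\,ds + \rho_2(f_6,\psi).
\end{align*}
The point is that, unlike the memoryless case, the coefficient $\dt b>0$ together with the already-controlled $\|\Psi\|^2$ (Lemma \ref{boundedness:Psi}) closes the estimate directly, without needing the delicate coupling identity \eqref{l3} that produced the powers $\lambda^4,\lambda^8$.

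We then bound each term on the right. The term $\rho_2\|\Psi\|^2$ is handled by Lemma \ref{boundedness:Psi}. The terms $\rho_2(\Psi,f_5)$ and $\rho_2(f_6,\psi)$ are bounded by $C\|U\|_{\mathcal{H}_2}\|F\|_{\mathcal{H}_2}$, using Poincar\'e for $\psi$ and $f_5$, and Young's inequality turns this into $\epsilon\|U\|_{\mathcal{H}_2}^2+C_\epsilon\|F\|_{\mathcal{H}_2}^2$. For $\kappa(\varphi_x+\psi,\psi)$ and $m(\theta,\psi)$ we use $\|\psi\|\le \sqrt{c_p}\|\psi_x\|\le C\|U\|_{\mathcal{H}_2}$ together with Young's inequality. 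This gives, for instance,
\[
|\kappa(\varphi_x+\psi,\psi)|\le \tfrac{\dt b}{4}\|\psi_x\|^2 + C\|\varphi_x+\psi\|^2 .
\]
The bounds on $\|\varphi_x+\psi\|^2$ and $\|\theta\|^2$ then come from Lemma \ref{boundedness:ux}.

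The memory term is the only new ingredient. By Cauchy--Schwarz in $s$,
\[
\Bigl|\int_0^\infty g(s)(\eta_x(s),\psi_x)\,ds\Bigr|\le b_0^{1/2}\|\eta\|_{L_g^2(\mathbb{R}^+;H_0^1)}\|\psi_x\|\le \tfrac{\dt b}{4}\|\psi_x\|^2 + C\|\eta\|^2_{L_g^2(\mathbb{R}^+;H_0^1)}.
\]
By \eqref{g}, $C\|\eta\|^2_{L_g^2}\le C\|U\|_{\mathcal{H}_2}\|F\|_{\mathcal{H}_2}\le \epsilon\|U\|_{\mathcal{H}_2}^2+C_\epsilon\|F\|_{\mathcal{H}_2}^2$. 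Absorbing the two $\frac{\dt b}{4}\|\psi_x\|^2$ contributions into the left side and renaming $\epsilon$ yields the claim for $|\lambda|\ge1$.

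The only step that needs care is keeping every constant independent of $\lambda$. We therefore avoid all $\lambda$-weighted quantities: no $(\psi,\eta)_{L_g^2}$ pairing of the kind used in Lemma \ref{boundedness:Psi}, and no $\theta_x$ (whose bound \eqref{estimate-thetax} carries $\lambda^2$). The identity above contains none of these, so we do not anticipate a genuine obstacle here. Lemma \ref{boundedness:Psi} is used as a black box, and it is where any $\lambda$-sensitivity actually lives.
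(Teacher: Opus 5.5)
Your proof is correct and is essentially the paper's argument. The paper tests \eqref{Resolvent equations-2}$_6$ against $\Psi$, substitutes $\Psi_x=i\lambda\psi_x-f_{5x}$, bounds the memory pairing by $|(\eta,\Psi)_{L_g^2(\mathbb{R}^+;H_0^1)}|\le|\lambda|\bigl(\epsilon\|U\|_{\mathcal{H}_2}^2+C_\epsilon\|F\|_{\mathcal{H}_2}^2\bigr)$, and divides by $\lambda$; this is the same identity as yours up to the factor $i\lambda$ and $f_5$-terms, and it uses the same Lemmas \ref{boundedness:ux} and \ref{boundedness:Psi}. Testing directly with $\psi$, as you do, is slightly cleaner because it avoids the division by $\lambda$ and the $\lambda$-weighted memory bound (you actually absorb three $\tfrac{\tilde b}{4}\|\psi_x\|^2$ terms, not two, which is harmless).
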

\begin{proof}
	From equation \eqref{Resolvent equations-2}$_5$, we get $\Psi_x = i\lambda \psi_x - f_{5x}$. Substituting $\Psi_x$ into the term $\dt b (\psi_x, \Psi_x)$ of equation \eqref{hh}, it follows that
	\begin{align}\label{eqpsix}
		i\lambda\rho_2 \|\Psi\|^2 - i\lambda \dt b \|\psi_x\|^2 - \dt b (\psi_x,f_{5x})+ \kappa(\varphi_x + \psi, \Psi) + (\eta,\Psi)_{L_g^2(\mathbb{R}^+;H_0^1)} - m(\theta, \Psi) = \rho_2 (f_6, \Psi).
	\end{align}
	From the proof of Lemma \ref{boundedness:Psi}, we obtain  $|(\eta, \Psi)_{L_g^2(\mathbb{R}^+; H_0^1)}| \leq |\lambda| \left( \epsilon \|U\|_{\mathcal{H}_2}^2 + C_{\epsilon} \|F\|_{\mathcal{H}_2}^2 \right).$ Then combined with Lemmas \ref{boundedness:ux} and \ref{boundedness:Psi}, from equation \eqref{eqpsix}, it follows that
	\begin{align*}
		\|\psi_x\|^2 \leq \epsilon\|U\|_{\mathcal{H}_2}^2 + C_{\epsilon}\|F\|_{\mathcal{H}_2}^2
	\end{align*}
	for $|\lambda| \geq 1$.
\end{proof}

Consequently, from Lemmas \ref{boundedness:v,q,eta}-\ref{boundedness:psix}, we derive $\|U\|_{\mathcal{H}_2}=\|(i\lambda I-\mathcal{A}_2)^{-1}F\|_{\mathcal{H}_2} \leq C\|F\|_{\mathcal{H}_2}$. The proof of this section is completed.

\begin{proof}[Proof of Theorem \ref{exponential stability}]
	By Theorem \ref{Stability Theorem}, Theorem \ref{exponential stability} is a direct consequence of steps 1 and 2.
\end{proof}


\end{document}